\newcommand\scalemath[2]{\scalebox{#1}{\mbox{\ensuremath{\displaystyle #2}}}}
\newcommand\R{\mathbb{R}}
\tikzset{mycolor/.style = {line width=1bp,color=#1}}%
\tikzset{myfillcolor/.style = {draw,fill=#1}}%
\NewDocumentCommand{\highlight}{O{blue!40} m m}{%
\draw[mycolor=#1] (#2.north west)rectangle (#3.south east);
}
\newcommand*\stref[1]{Table S\ref{#1}}
\newcommand*\ssref[1]{Sec. S\ref{#1}}
\begin{document}
\nolinenumbers
\title{Optimal Vaccine Allocation for Pandemic Stabilization} 
\author{Qianqian Ma\thanks{Department of Electrical and Computer Engineering, Boston
  University, Boston, MA, USA } \and Yang-Yu Liu\thanks{Channing Division of Network Medicine,
  Brigham and Women's Hospital, Harvard Medical School, Boston, MA
  02115, USA} \and Alex Olshevsky\thanks{Department of Electrical and Computer Engineering and Division
  of Systems Engineering, Boston University, Boston, MA, USA}}
\maketitle

\begin{abstract} 
How to strategically allocate the available vaccines is a crucial issue for pandemic control. In this work, we propose a mathematical framework for optimal stabilizing vaccine allocation, where our goal is to send the infections to zero as soon as possible with a fixed number of vaccine doses.  This framework allows us to efficiently compute the optimal vaccine allocation policy for general epidemic spread models including SIS/SIR/SEIR and a new model of COVID-19 transmissions. By fitting the real data in New York State to our framework, we found that the optimal stabilizing vaccine allocation policy suggests offering vaccines priority to locations where there are more susceptible people and where the residents spend longer time outside the home. Besides, we found that offering vaccines priority to young adults (20-29) and middle-age adults (20-44) can minimize the cumulative infected cases and the death cases. Moreover, we compared our method with five age-stratified strategies in \cite{bubar2021model} based on their epidemics model. We also found it's better to offer vaccine priorities to young people to curb the disease and minimize the deaths when the basic reproduction number $R_0$ is moderately above one, which describes the most world during COVID-19. Such phenomenon has been ignored in \cite{bubar2021model}.
\end{abstract} 

\tableofcontents
\section{Introduction}
The COVID-19 pandemic has caused almost 187M cases and 3.7M (June 2021) deaths worldwide, and an unprecedented social and economic cost. Untill now, FDA has approved three types of COVID-19 vaccines, and around 40\% people in USA has been fully vaccinated for coronavirus. However, it may still takes months until sufficient vaccines are available to overcome the pandemic. Therefore, it is important to strategically allocate the available vaccines such that the number of the infected cases as well as the death cases remains as small as possible.

In this work, we consider the optimal stabilizing vaccine allocation issue: how to allocate a fixed number of vaccines to different locations and different age groups so that the number of infections goes to zero as soon as possible. Here we propose a framework to design optimal stabilizing vaccine allocation policy for a COVID-19 transmission model with symptomatic and asymptomatic compartments. We consider two different scenarios. In our first scenario, the network model we consider consists of locations, where the demographic difference is ignored. Next, we also study the scenario where the demographic structure of each location is considered. 

We apply the proposed algorithm to design optimal stabilizing vaccine allocation policy on both synthetic and real data (using  data from SafeGraph \cite{travel} to fit a county-level model of New York State) for epidemic spread models of COVID-19 using disease parameters from CDC\cite{old_people, cdc_data_tracker,reporting_rate,symptom_rate}. 

Notation: $A \succcurlyeq B$ implies matrix $A - B$ is positive semi-definite. $A \preccurlyeq B$ implies matrix $B - A$ is positive semi-definite. $I$ represents an identity matrix. $\lambda_{\rm max}$ is the largest eigenvalue of matrix $A$.

\section{Results}
All the epidemic spread models considered in this work are compartmental or network models \cite{birge2020controlling} with ``locations'' corresponding to  neighborhoods,  counties, or other geographic subdivisions. 
Our framework can be applied to general epidemic spread models. For demonstration purpose, here we consider a simple model of COVID-19 spreading which contains the classical Susceptible-Infectious-Recovered (SIR) model and the Susceptible-Exposed-Infectious-Recovered (SEIR) model as special cases.

\subsection{A network model of  COVID-19.}
We consider a simple model (similar to models in literature \cite{khanafer2014optimal,giordano2020modelling,birge2020controlling, pagliara2020adaptive, carli2020model, zino2020assessing}) of COVID-19 spreading that breaks infected individuals into two types: asymptomatic and symptomatic. This model allows individuals transmit the infection at different rates: 
\begin{equation}\label{eq:COVID}
	\begin{aligned}
		\dot{s}_i & =  - s_i \sum_{j=1}^n a_{ij} (\beta^{\text a} x^{\text a}_j + \beta^{\text s} x^{\text s}_j) \\ 
		\dot{x}^{\text a}_i & =  s_i \sum_{j=1}^n a_{ij} (\beta^{\text a} x^{\text a}_j + \beta^{\text s} x^{\text s}_j) - (\epsilon + r^{\text a}) x^{\text a}_i \\ 
		\dot{x}^{\text s}_i & =  \epsilon x^{\text a}_i - r^{\text s} x^{\text s}_i  - \kappa x^{\text s}_i  \\
		 \dot{e}_i &= \kappa x^{\text s}_i \\
		 \dot{h}_i &= r^{\text a} x^{\text a}_i + r^{\text s} x^{\text s}_i
	\end{aligned}.
\end{equation}
Here $s_i$ ($x_i^{\text a}$ or $x_i^{\text s}$) stands for the proportion of susceptible (asymptomatic or symptomatic infected, respectively) population at location $i$, $a_{ij}$ captures the rate at which infection flows from location $j$ to location $i$, $e_i$ (or $h_i$) stands for the proportion of deceased (or recovered) population at location $i$,  $\beta^{\text a}$ (or $\beta^{\text s}$) is the transmission rate of asymptomatic (or symptomatic) infected individuals, $r^\text{a}$ (or $r^\text{s}$) is the recovery rate of asymptomatic (or symptomatic) infected individuals. We assume that infected individuals are asymptomatic at first and $\epsilon$ is the rate at which they develop symptoms, $\kappa$ is the rate at which the symptomatic patients die. We use different parameters for symptomatic and asymptomatic individuals because a recent study \cite{Kissler2020.10.21.20217042} reported that asymptomatic individuals have viral load that drops more quickly, so they not only recover faster, but also are probably less contagious. 

Note that our model of COVID-19 spreading can be considered as a generalization of the classical SIR model and the SEIR model of epidemic spread. Indeed, by setting $\beta^{\text s}=\epsilon= r^{\text s} =\kappa= 0$, we recover the SIR model; and by setting $\beta^{\text a}=r^{\text a} =\kappa= 0$, we recover the SEIR model. However, neither the SIR nor the SEIR model captures the existence of two classes of individuals who transmit infections at different rates as above. 

We follow the method in \cite{ma2020optimal} to define the quantities $a_{ij}$ as
\begin{equation}\label{eq: a_ij}
	a_{ij} = \sum_{l=1}^n  \tau_{il} \tau_{jl} \frac{N^{*}_j}{\sum_{k=1}^n N^{*}_k \tau_{kl}}, 
\end{equation}
where $N^{*}_j$ denotes the resident population at location $j$, and people travel from location $i$ to location $j$ at rate $\tau_{ij}$. Since people travel between different locations, the total population of a location is time-varying, here $N^{*}_j$ represents the population of the long-time residents of location $j$.

In matrix form, we can write Eq. \eqref{eq:COVID} as 
\begin{equation} \label{COVID_19}
	\renewcommand{\arraystretch}{0.75}
	\left( 
	\begin{array}{c} 
		\dot{s} \\
		\dot{x}^{\text a} \\ 
		\dot{x}^{\text s}
	\end{array} 
	\right) = 
	\begin{tikzpicture}[baseline=-\the\dimexpr\fontdimen22\textfont2\relax ]
		\matrix (m)[matrix of math nodes, left delimiter=(,right delimiter=), nodes={minimum width=5em, minimum height=1.4em}]
		{
			0 & -\beta^{\text a} {\rm diag}(s) A &  - \beta^{\text s} {\rm diag}(s) A \\
			0 & \beta^{\text a} {\rm diag}(s) A - (\epsilon + r^{\text a}) &  \beta^{\text s} {\rm diag}(s) A \\
			0 & \epsilon & - {\rm diag}(r^{\text s} + \kappa)\\
		};
		
		\begin{pgfonlayer}{myback}
			\highlight[gray]{m-2-2}{m-3-3}
		\end{pgfonlayer}
	\end{tikzpicture}
	\left( 
	\begin{array}{c} 
		s \\
		x^{\text a} \\ 
		x^{\text s}
	\end{array} 
	\right),
\end{equation}
where scalars in the matrix should be understood as multiplying the identity matrix and
\begin{equation}
	A = \tau {\rm diag}\left(\sum_{k} N^{*}_k \tau_{kl}\right)^{-1} \tau^\top {\rm diag}(N_i^*)
\end{equation}
where $\tau = (\tau_{ij})$.  Let us write $M(t)$ for the bottom right $2n \times 2n$ submatrix (outlined by a  box) in Eq. \eqref{COVID_19}. According to Proposition 2 in \cite{ma2020optimal}, if we want the number of infections at each location (or a linear combination of those numbers) to go to zero at a prescribed rate $\alpha$, we just need to ensure that the linear eigenvalue condition $\lambda_{\rm max}(M(t_0)) \leq -\alpha$ holds.

\subsection{The COVID-19 model with demographic structure.}
In this section, we consider the COVID-19 model with demographic structures. The population of each location is partitioned into six mutually exclusive age groups \cite{enayati2020optimal}: preschool children (0-4 years), school children (5-19 years), young adults (20-29 years), middle age adults (30-44 years), middle age adults (45-64 years), and seniors (65 years and over). Contact intensities between different age groups can be different, which in turn can lead to difference of the infection flows.

To construct the epidemic models with demographic structure, we will first introduce the contact matrix  $C$ \cite{mossong2008social,arregui2018projecting,prem2017projecting}, where $C_{ij}$ is the mean number of contacts that an individual of group $i$ has with other individuals of group $j$ during a day. In our model, we will not directly use the contact matrix $C$, instead, we will use the intrinsic connectivity matrix $\Gamma$ \cite{arregui2018projecting}\cite{Brittoneabc6810,de2018impact,prem2017projecting}, which is defined as
\[\Gamma_{ij} = M_{ij} \frac{N}{N_j}, \]
where $N$ is the total population, $N_j$ is the population of age group $j$. $\Gamma_{ij}$ corresponds to the contact pattern in a ``rectangular" demography \cite{arregui2018projecting}(a population structure where all age groups have the same density). We will use the matrix $\Gamma$ to quantify the infection flows between different age groups.

We can write the COVID-19 model with demographic structures as
\begin{equation} \label{eq: covid 19, demographic}
	\renewcommand{\arraystretch}{0.75}
	\left( 
	\begin{array}{c} 
		\dot{s} \\
		\dot{x}^{\text a} \\ 
		\dot{x}^{\text s}
	\end{array} 
	\right) = 
\left( 
\begin{matrix}
0 & -{\rm diag}(\beta^{\text a'}) {\rm diag}(s) A' &  -{\rm diag} (\beta^{\text s'} ){\rm diag}(s) A' \\
0 & -{\rm diag}(\beta^{\text a'}) {\rm diag}(s) A' - (\epsilon + r^{\text a}) &  -{\rm diag}(\beta^{\text s'}) {\rm diag}(s) A' \\
0 & \epsilon & - {\rm diag}(r^{\text s} + \kappa)
\end{matrix}\right)
	\left( 
	\begin{array}{c} 
		s \\
		x^{\text a} \\ 
		x^{\text s}
	\end{array} 
	\right).
\end{equation}
Here $s, x^{\text a}, x^{\text s} \in \mathbb{R}^{6n\times 1}$,  $s_i(a)$ ($x_i^{\text a}(a)$ or $x_i^{\text s}(a)$) stands for the proportion of susceptible (asymptomatic or symptomatic infected, respectively) population of age group a) at location $i$, 
\begin{equation}\label{eq: A prime definition}
	A' = (\bar{A} \otimes \Gamma) {\rm diag}(N^*) \in \mathbb{R}^{6n\times 6n},
\end{equation}
where
\begin{equation}\label{eq: bar_A}
\bar{A} = \tau {\rm diag}\left(\sum_{k} N^{*}_k \tau_{kl}\right)^{-1} \tau^\top.	
\end{equation}
The details about how the matrix $A'$ is constructed is presented in SI Sec. \ref{sec: construction A'}. $\beta^{\text a'}$ (or $\beta^{\text s'}$) is the transmission risk of asymptomatic (or symptomatic) infected individuals. The difference between $\beta^{\text a}$ (or $\beta^{\text s}$) and $\beta^{\text a'}$ (or $\beta^{\text s'}$) is that the former is the probability that a susceptible individual get infected by an asymptomatic (or symptomatic) individual in a day, while the latter is the probability that a susceptible individual get infected from a meeting with an asymptomatic (or symptomatic) individual. Such meetings can happen multiple times in a day.  Note that $\kappa$, $\beta^{\rm a'}$, $\beta^{\rm s'}\in \mathbb{R}^{6n\times 1}$ in \eqref{eq: covid 19, demographic},  as we assume people in different age groups have different values of mortality rate and transmission risk,  as reported in COVID-19\cite{old_people}\cite{davies2020age}.

\subsection{optimal stabilizing vaccine allocation design for the COVID-19 model.} The optimal stabilizing vaccine allocation problem we consider can be summarized as follows: suppose the number of the available vaccine doses is fixed, we want to send the infections going to zero as soon as possible by allocating the vaccines to different locations in a non-uniform way. This is equivalent to fix the decay rate of the epidemics and minimize the number of the vaccine doses used. If we can solve the second problem, it is convenient for us to solve the first problem by using the binary search method. 

Suppose the vaccines are given to people at time $t_0$, and the vaccines will be effective immediately. 
The vaccinated people are no longer susceptible, then the initial susceptible rate of location $i$ for the COVID-19 model is $s_i(t_0) - \psi v_i$, 
where $v_i$ is the proportion of the vaccinated population at location $i$,  $\psi$ is the efficacy of the vaccines.  According to Proposition 2 in \cite{ma2020optimal}, if $\lambda_{\rm max}(M(t_0)) \leq \alpha$ ($M$ is a submatrix in the COVID-19 model outlined by a box in Eq. \eqref{COVID_19}), then there exists a positive linear combination of the quantities $x_i(t)$ that decays to zero at rate $\alpha$ starting at any time $t_0$. Thus the optimal stabilizing vaccine allocation problem can be formulated as a convex optimization problem as follows 
\begin{equation}\label{eq: vaccine initial problem}
	\begin{aligned}
		\min_{v_i}\quad & \sum_i N_i^*v_i\\
		s.t.\quad &\lambda_{\rm max}(M(t_0)) \leq -\alpha\\
		&0\leq v_i \leq s_i(t_0),~ i = 1,\ldots, n.
	\end{aligned}
\end{equation}
Let $N_ib_1(s_i(t_0) - \psi v_i) = u_i$, after some reductions (see details in SI Section \ref{sec: reduction1}),
the optimal stabilizing vaccine allocation problem \eqref{eq: vaccine initial problem} can be written as a Semidefinite Programming (SDP) problem as follows,
\begin{equation}\label{eq: SDP}
	\begin{aligned}
		\min_{u_i}\quad & - \sum_{i} u_i\\
		s.t.\quad & {\rm diag}(u_i)\preccurlyeq \bar{A}^{-1}\\
		&(1- \psi)s_i(t_0)N_ib_1 \leq u_i \leq s_i(t_0)N_ib_1,~ i = 1,\ldots, n.
	\end{aligned}
\end{equation}

From the discussion above, we can see that our method is trying to design a vaccine allocation policy to enforce decay of the infections with a prescribed decay rate by modifying the initial susceptible rate $s(t_0)$ in matrix $M(t_0)$ to meet an eigenvalue bound. This strategy is different from the traditional optimal control approaches \cite{NBERw27102,bock2018optimal,alvarez2020simple,fajgelbaum2020optimal} for the network epidemic models in the following two major ways. First, our method provides a fixed vaccine allocation policy while the traditional optimal control approaches provide time-varying policies (the policy can be different in every time $t$).  Such time-varying policies are not realistic. On the other hand, if the time-varying policy is approximated by a series of fixed allocation policies, the optimality of the approach can not be guaranteed. Second, our main result is a SDP algorithm, which is scalable. However, the traditional optimal control approaches can not guarantee the scalability or sometimes even the convergence.

\subsection{optimal stabilizing vaccine allocation design for COVID-19 model with demographic structure.} If we consider the demographic structure of the COVID-19 model, the optimal stabilizing vaccine allocation design is similar to the cases for COVID-19 model without demographic structures. We can simply replace matrix $\bar{A}$ with matrix $\bar{A}\otimes \Gamma$, replace scalar $\beta^{\text a}$, $\beta^{\text s}$ with $\beta^{\text a'}$, $\beta^{\text s'}$, then follow the same method to solve this problem. Here, a major concern is that $\bar{A}\otimes \Gamma$ is not necessarily positive definite, as the intrinsic connectivity matrix $\Gamma$ is not necessarily positive definite. If $\bar{A}\otimes \Gamma$ is not positive definite, we can not use the trick that $A \succcurlyeq B$ is equivalent to $B^{-1} \succcurlyeq A^{-1}$, and the optimal stabilizing vaccine allocation design problem can not be written as a SDP problem.

Fortunately, the contact matrix obtained by gathering empirical social contacts usually shows a pattern\cite{mossong2008social, mistry2021inferring, Brittoneabc6810, prem2017projecting} that the diagonal elements are greater than off-diagonal elements. This implies that people contact more frequently with the ones that from the same age group. When the number of age groups is small, such pattern can be strengthened, therefore it is very likely that the contact matrix is positive definite. For instance, if we divide the population into six age groups as we discussed, the contact matrix for each country (8 in total) in \cite{mossong2008social} is positive definite, so as the contact matrix for New York State in \cite{mistry2021inferring}. As $\Gamma = C {\rm diag}(N/N_i)$, matrix $\Gamma$ will be positive-definite if the contact matrix $C$ is positive definite. 

If $\Gamma$ is not positive definite, we still can formulate the optimal stabilizing vaccine allocation problem as following 
\begin{equation}\label{eq: vaccine problem general 0}
	\begin{aligned}
		\min_{v_i}\quad & \sum_i N_i^*v_i\\
		s.t.\quad & \lambda_{\rm max}({\rm diag}(s(t_0) - \psi v)Ab_1) \leq 1 \\
		&0\leq v_i \leq s_i(t_0),~ i = 1,\ldots, n.
	\end{aligned}
\end{equation}
The reduction of this problem can be found in SI Sec. \ref{sec: reduction2}.

\section{Empirical analysis} We now apply the algorithms we've developed to design an optimal stabilizing vaccine allocation policy for the 62 counties in the State of New York (NY).

\subsection{COVID-19 Model Without Demographic structures.}
First we consider the network consists of locations, where demographic structures in each county are ignored. All the parameters and data sources we employed are presented in SI Sec. \ref{sec: data covid_19}.

\textbf{Comparison with other allocation policies.} We used the data of the 62 counties in NY on Dec. 1st, 2020 as initialization and estimated the number of the new cases, cumulative cases and death cases over $500$ days  with different vaccine allocation policies. We consider two different scenarios where the number of the available vaccine doses is limited (5\% of the population) and unlimited (100\% of the population), respectively. The vaccines are supplied daily at a speed of 0.33\% of the population in NY per day, where the number 0.33\% is estimated from the data in \cite{NY_vaccine_num_tracker}. The simulation results are presented in Fig. S\ref{fig: location_curves_small} and Fig. S\ref{fig: location_curves_large}.
We compared the optimal stabilizing vaccine allocation policy calculated by our method with three other benchmark policies: (1) no vaccine: $v_i=0$ for all locations; (2) population weighted: the number of the vaccine doses allocated to location $i$ is proportional to the population of location $i$; (3) infection weighted: the number of the vaccine doses allocated to location $i$ is proportional to the number of the cumulative cases at location $i$.
It can be observed that in the two scenarios,  our policy outperforms all other polices in terms of the new cases, cumulative cases, as well as the death cases.

\textbf{Optimal vaccine rate $v_i$ (\# vaccine doses) for each county.}  Fig. S\ref{fig: location_daily_small} and Fig. S\ref{fig: location_daily_large} show the vaccine allocation rate $v_i$ and the number of vaccine doses of each county calculated by these methods we discussed, where the vaccine supply is $5\%$ and $100\%$ of the population, respectively.
It can be observed from Fig. S\ref{fig: location_daily_small}a,d that the counties in the sounthernmost of NY (mainly the counties in NYC and Long Island) are allocated with zero vaccines by our method when the vaccine supply is limited. This is a counter-intuitive result: even though the epidemics as well as the population was largely localized in the NYC and Long Island, the calculated optimal stabilizing vaccine allocation rate indicates that it is more efficient to reduce the spread of COVID-19 by allocating more vaccines to counties with smaller infections and populations. This is also quite different from the actual vaccine allocation policy\cite{NY_vaccine_tracker} applied, where the majority of the vaccines provided for NY was allocated to NYC, Long Island in the first month after the vaccines are becoming available.

There are two possible reasons for this phenomenon. First, the susceptible rates of these counties are relatively smaller than other counties (see Fig. S\ref{fig: home_rate}b), which means there are more residents of these counties that are immune to the disease. Second, according to data provided by Safegraph \cite{travel}, residents of these counties have higher values of daily home-dwell-time (see Fig. S\ref{fig: home_rate}a), which means they tend to spend longer time at home and therefore are less likely to be infected. In this case, if we give vaccine priority to the other counties in NY, it would be more efficient to curb the epidemics.

In \ssref{sec: two nodes}, we further replicate the same finding in a much simpler two-node network model: the optimal stabilizing vaccine allocation policy tends to assign zero vaccines to location with larger value of the home-dwell-time or smaller value of the initial susceptible rate. We also found that the value of $v_i^*$ is not sensitive to the population.

\textbf{Additional observations.} The effective reproduction number ($R_t$) is the average number of individuals infected by a single infected individual in the population which consists of the susceptible and non-susceptible people. It is an important metric to follow up the growth of epidemics. Meanwhile, the number of the vaccine supply and the time interval between two vaccine supplies can also impact the allocation of the vaccines.
To fully understand the effect of these parameters to the performance of the policies we discussed, we implemented additional numerical experiments. The results are shown in Fig. S\ref{fig: location_V_num_varying}, Fig. S\ref{fig: location_period_varying}, Fig. S\ref{fig: location_Rt_varying}. It can be observed that our policy outperforms all the other policies regardless of the number of the available vaccine doses, the value of $R_t$, and the time interval between two vaccine supplies.

\subsection{COVID-19 model with demographic structures} Next we consider the cases where the demographic structure of each county is considered. People from different age groups may have different values of the transmission risk $\beta^{\rm a'}(a)$, $\beta^{\rm s'}(a)$, and different values of mortality rate $\kappa(a)$. All the parameters and data sources we used are presented in SI Sec. \ref{sec: data covid_19_demo}.

\textbf{Comparison with other lockdown policies.} We still used the data of COVID-19 break in NY on Dec. 1st, 2021 as initilization and estimated the number of new, cumulative and death cases over 500 days with different vaccine allocation policies. The basic setting is similar to the simulations for the COVID-19 model without demographic structures.
The simulation results are presented in Fig. \ref{fig: demo_allocation_small} and Fig. S\ref{fig: demo_curves_large}.
%
It can be observed that our policy outperforms all the other policies no matter the vaccine supply is limited (5\%) or unlimited (100\%). Note that the mortality rate of old people in this pandemic is much higher than young people\cite{old_people,cdc_data_tracker}, however, it can be seen from Fig. \ref{fig: demo_allocation_small}d that our policy gives almost all the available vaccines to young (20-29) and middle-age adults(30-44) when the vaccine supply is limited. This is because people of these two groups have relatively higher contact rates and the transmission risks $\beta_0$ (see Fig. S\ref{fig: contact matrix}), which means it is more likely for them to transmit the disease.  Offering vaccine priority to people between 20-44  is the most efficient way to curb the pandemic, as a consequence less seniors will be infected, and less of them will die of this disease.

\textbf{Optimal vaccine rate $v_i$ (\# vaccine doses) for each county.} Fig. \ref{fig: demo_daily_small} and Fig. S\ref{fig: demo_daily_large} show the vaccine allocation rate $v_i$ and the number of the vaccine doses for all these polices we discussed, where the vaccine supply is $5\%$ and 100\% of the population, respectively. Here $v_i$ (\# vaccine doses) is the sum of vaccine rate $v_i$ (\# vaccine doses) for all six age groups of location $i$. Similar to the scenario which ignores the demographic structure, the optimal stabilizing vaccine allocation policy suggests to allocate more vaccine doses to counties outside of NYC, Long Island, while all the other policies does not show this pattern. The reason is similar as before.

\textbf{Impact of the number of the available vaccine doses.} 
To check how will the the distribution of the vaccines in the six age groups suggested by our method change with the vaccine supply changes, we varied the number of the vaccine doses from $1\%$ to $50\%$ of the population in NY, and recorded the dynamical vaccine allocation policy for each day. Then we computed the vaccine distribution in the six age groups. Note that we only observe to 50\% as the number of infections drops to 0 after around 50\% of people in NY getting vaccinated.
Fig. \ref{fig: allocation_vary_vaccine_num} shows the simulation results. We found that the order of vaccines priorities suggested by our method when the vaccine supply increases is: young adults (20-29), middle age adults (30-44), school-age children (5-19), middle age adults(45-64), seniors (65+), and preschool children (0-4). Such order is closely related to the contact intensity and the transmission risk (see Fig. S\ref{fig: contact matrix}), which decides the transmission rate. As our method is designed to maximize the decay rate of the epidemics, the vaccines will be allocated firstly to the group which has the highest value of the transmission rate.

{\bf Additional Observations.} Similar to before, we also implemented additional sensitivity analysis experiments in terms of the number of the available vaccines, the value of $R_t$ and the time interval between two vaccine supplies for COVID-19 model with demographic structures. The experimental results are shown in Fig. S\ref{fig: demo_V_num_varying}, Fig. S\ref{fig: demo_period_varying}, and Fig. S\ref{fig: demo_Rt_varying}. It can be observed that our optimal stabilizing allocation policy outperforms all the other policies regardless of the number of the available vaccines, the value of $R_t$ and the time interval between two vaccine supplies.

\subsection{Results on another model about COVID-19.} In literature \cite{bubar2021model}, the authors studied five age-stratified COVID-19 vaccine prioritization strategies based on a mathematical model, and some observations and suggestions about the prioritization strategies have been proposed. To further verify the effectiveness of our proposed method, we design an optimal stabilizing vaccine allocation policy for the model in \cite{bubar2021model} (see details in SI section \ref{sec: comparion with bubar}). Then we compare the proposed policy with the five age-stratified policies provided in \cite{bubar2021model}. All the parameters and data as well as the epidemics model we used are identical to the ones in \cite{bubar2021model}.

The basic reproduction number $R_0$ is also an important parameter to follow up the growth of the epidemics. It is very similar to $R_t$, the only difference is that $R_t$ assumes the population consists of both the susceptible and non-susceptible individuals, while $R_0$ assumes the population only consists of susceptible individuals. Since the model in \cite{bubar2021model} is designed to match the value of $R_0$, we will also consider $R_0$ here. Besides, the vaccine supply is another important parameter which determines the allocation of the vaccines. Therefore, we experimented with different values of $R_0$ and different numbers of vaccine supply.

In Fig. \ref{fig: R0_115_5_to_50}, Fig. S\ref{fig: R0_105_5_to_50} and Fig. S\ref{fig: R0_125_5_to_50}, we show the estimated percentage of infected cases as well as the cumulative mortality cases in the population over 500 days, where $R_0 = 1.15$, $R_0 = 1.05$ and $R_0 = 1.25$, and the vaccine supply is $5\%$, $20\%$, and $50\%$, respectively. Meanwhile, we also show the distribution of vaccines provided by our method and the five age-stratified strategies from \cite{bubar2021model} in Fig. \ref{fig: R0_115_5_to_50}, Fig. S\ref{fig: R0_105_5_to_50} and Fig. S\ref{fig: R0_125_5_to_50}. It can be seen that our method outperforms all the five age-stratified prioritization strategies in \cite{bubar2021model}. Moreover, we can see that the vaccine distribution suggested by our method is different from any of the distributions in \cite{bubar2021model}. Particularly, we can observe that our method suggests to offer vaccine priority to adults between 30-40 when the vaccine supply is small (5\%). This is because people in this group have the highest value of transmission rate, allocating vaccines to them firstly can help curb the epidemics fast. Paradoxically, this also causes fewer deaths than giving vaccines to elderly people.

Until now, vaccine supply is no longer an issue in the United States. Therefore, we also experimented with unlimited vaccine supplies (100\% vaccine supply). As our method is designed to minimize the decay rate, and the infections would drop to 0 before using up all the vaccines, we will allocate the leftover vaccines evenly to all the age-groups after the vaccines disappears. Fig. \ref{fig: R0_105_115_125_100_percent}, Fig. S\ref{fig: R0_100_110_120_100_percent}, and Fig. S\ref{fig: R0_130_135_100_percent} show the experimental results with $R_0 = 1.0$, 1.05, 1.10, 1.15, 1.20, 1.25, 1.30, 1.35, respectively. It can be seen that our method outperforms all the five age-stratified prioritization strategies in \cite{bubar2021model} when $ 1.0 < R_0 < 1.30$. When $R_0 = 1.0$, our method still outperforms the all the strategies in \cite{bubar2021model} in terms of the infected cases, 
the estimated mortality cases for our method is close to the strategy which offers vaccine priority to elderly people. When $R_0 > 1.30$, the prioritization strategy for seniors is the best when consider the mortality cases.

In summary, if $R_0$ is at one or moderately above one -- which describes most of the world during COVID \cite{global} -- it's better to offer vaccine priorities to young people to curb disease spread in every way.
Paradoxically, this also causes fewer deaths than offering vaccine priorities to older people.

\begin{figure}[!htb]
	\centering
	\begin{minipage}[b]{0.95\linewidth}
		\centering
		\includegraphics[width=1.0\linewidth]{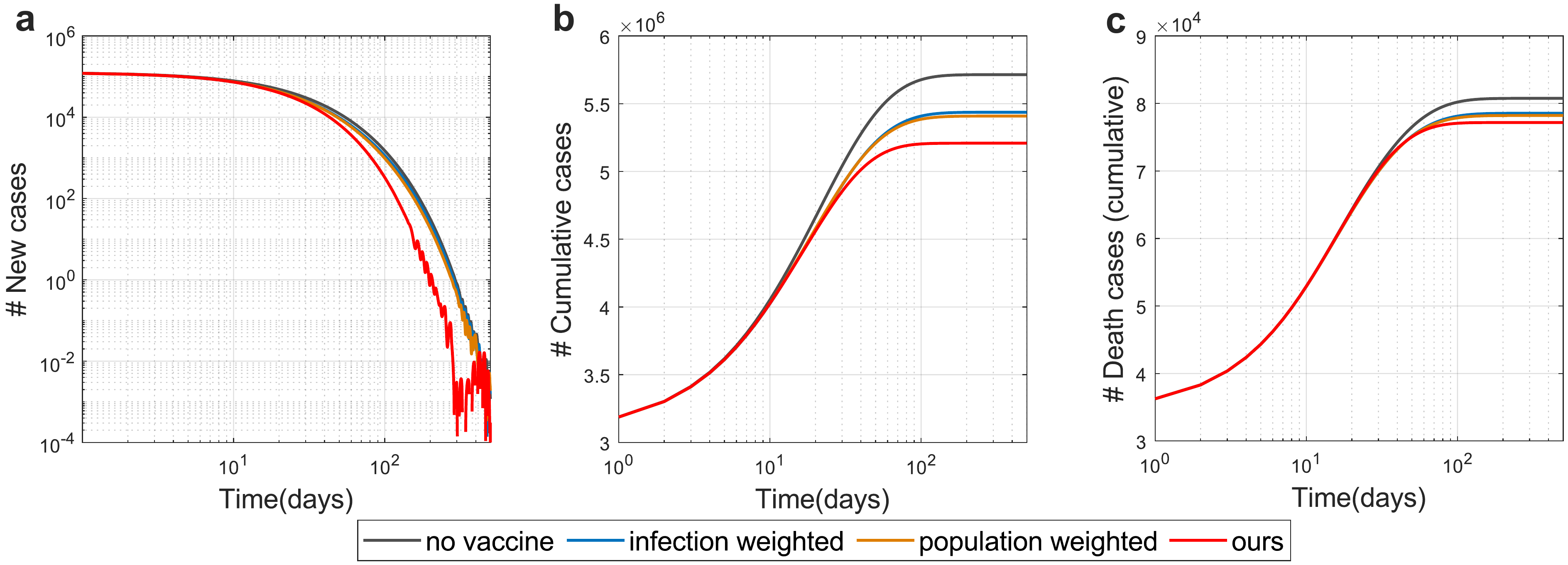}			
	\end{minipage}\hfill

	\begin{minipage}[b]{0.6\linewidth}
		\centering
		\includegraphics[width=1.0\linewidth]{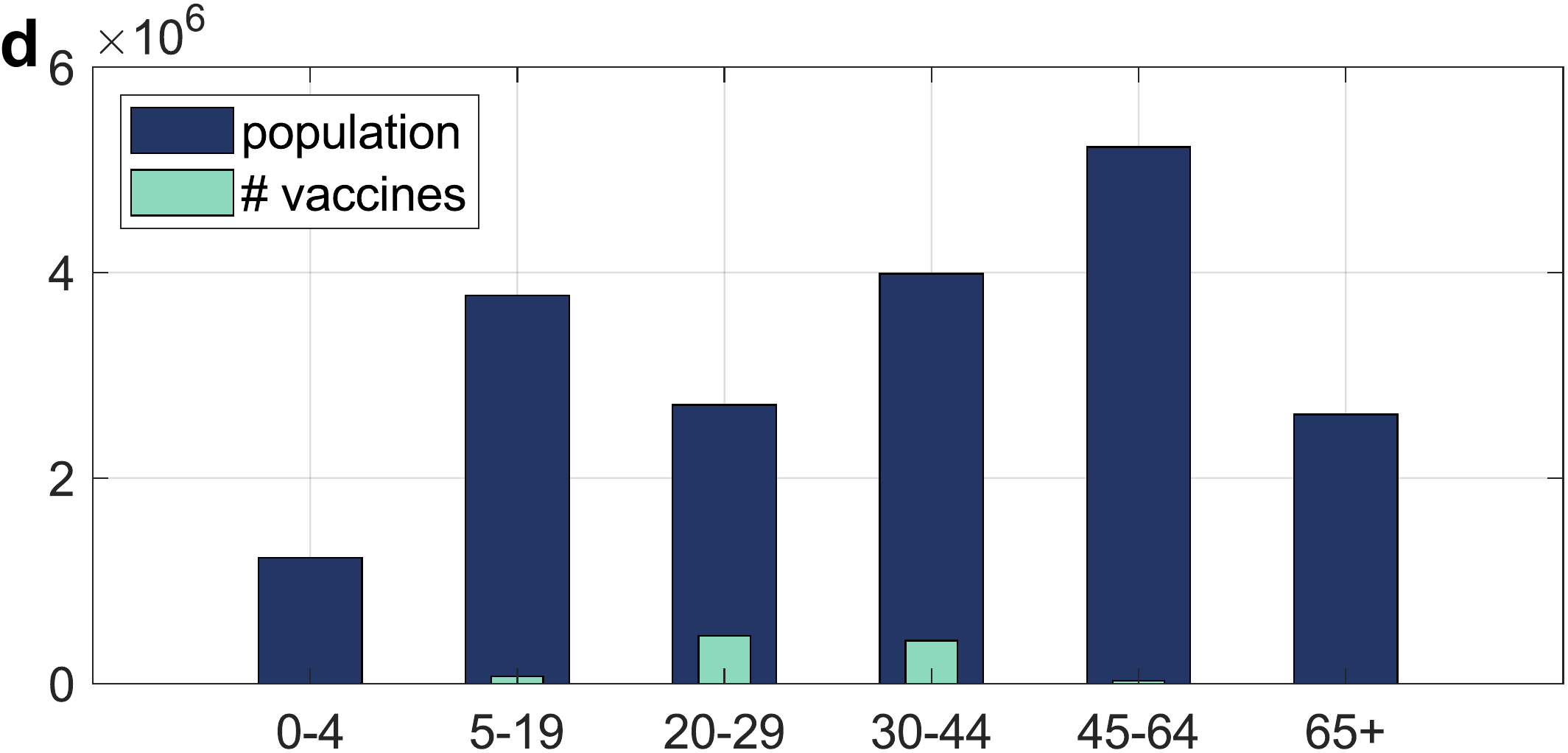}
	\end{minipage}%
	\centering
	\caption{{\bf When the vaccine supply is limited, our method suggests to allocate almost all the vaccines to young adults in 20-44, such strategy outperforms all the other comparison methods.}  {\bf a}, the estimated number of daily new cases.  {\bf b}, the estimated number of cumulative cases. {\bf c}, the estimated number of cumulative death cases. {\bf d}, the optimal stabilizing vaccine allocation number for each age group calculated by our method. The vaccines are supplied daily with
	a speed of $0.33\%$ of total population per day. The total number of the available vaccine doses for these policies are the same, that is $5\%$ of the total population. ``Population weighted" implies the policy where the number of the vaccine doses allocated to county $i$ is proportional to the population $N_i$. ``infection weighted" implies the policy where the number of the vaccine doses allocated to county $i$ is proportional to the cumulative cases at location $N_i$. ``No vaccine" implies the policy where the vaccines are not applied. The data applied is about COVID-19 outbreak in NY on December 1st, 2020.
	}\label{fig: demo_allocation_small}
\end{figure}
\begin{figure}[!htb]
	\centering
	\begin{minipage}[b]{0.95\linewidth}
		\centering
		\includegraphics[width=1.0\linewidth]{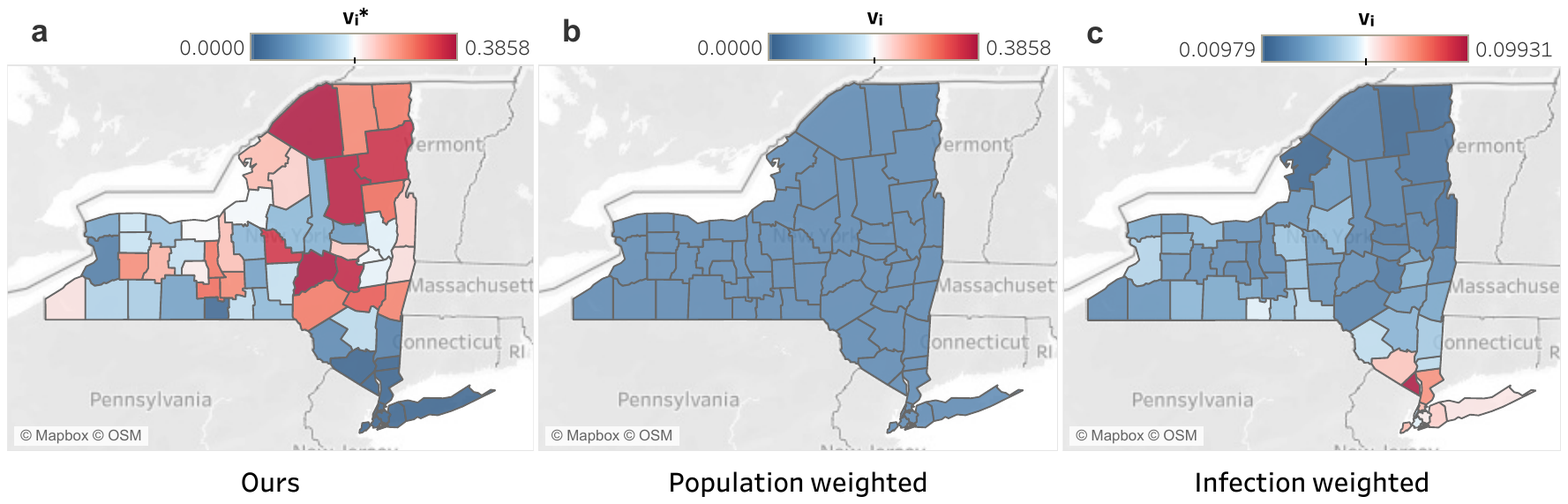}			
	\end{minipage}\hfill
	
	\vspace{3mm}
	
	\begin{minipage}[b]{0.95\linewidth}
		\centering
		\includegraphics[width=1.0\linewidth]{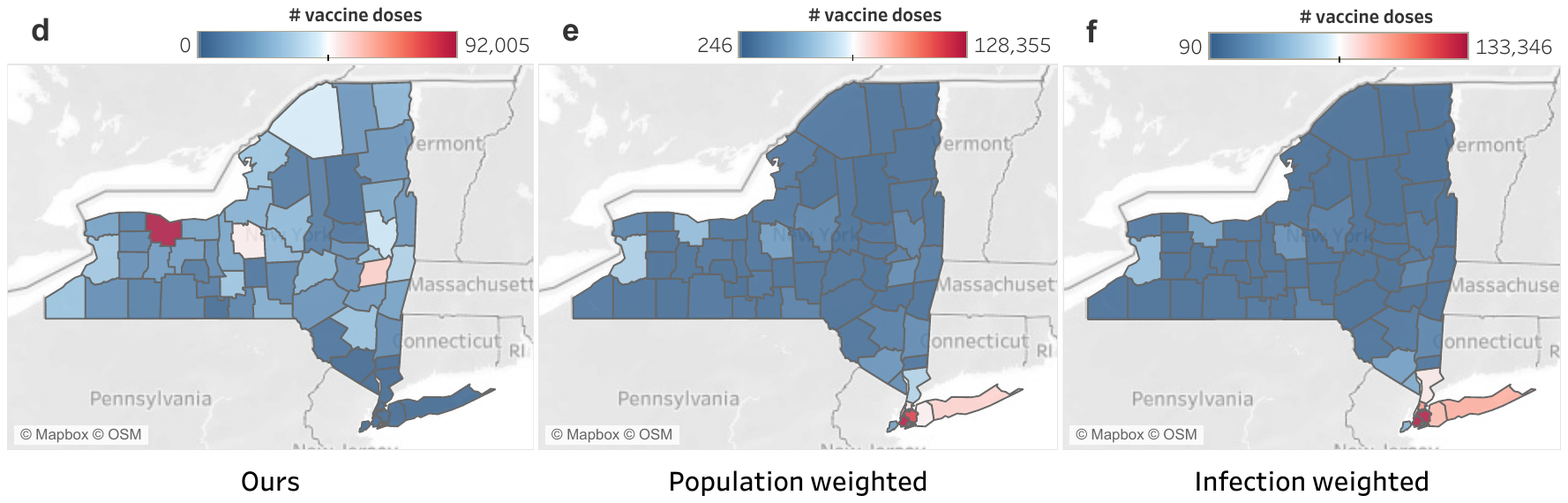}
	\end{minipage}%
	\centering
	\caption{ {\bf When vaccine supply is limited, our method suggests to allocate zero vaccines to counties in NYC and Long Island.} {\bf a-c}, vaccine allocation rate $v_i$ of each county given by different policies. {\bf d-f}, the number of vaccine doses allocated to each county by different policies. The vaccines are supplied daily with
		a speed of $0.33\%$ of total population per day. The total number of the available vaccine doses for these policies are the same, that is $5\%$ of the total population. ``Population weighted", ``infection weighted", and ``no vaccine" polices are defined as in Fig. \ref{fig: demo_allocation_small}. The data applied is about COVID-19 outbreak in NY on December 1st, 2020. The values of $v_i^*$, $v_i$ in this figure corresponds to the results in Fig. \ref{fig: demo_allocation_small}.
	}\label{fig: demo_daily_small}
\end{figure}
\begin{figure}[!htb]
	\includegraphics[width=1.0\linewidth]{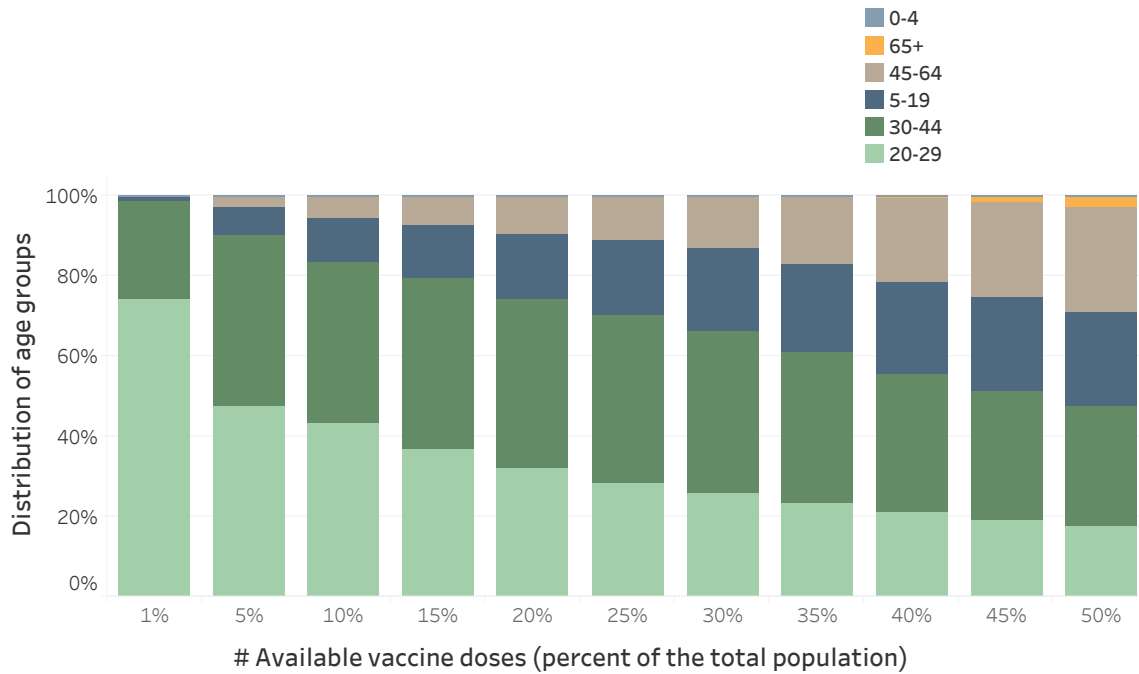}
	\centering
	\caption{{\bf The order of the vaccine priority suggested by our method when the vaccine supply increases is: 20-19, 30-44, 5-19, 45-64, 65+, 0-4.} The vaccines are supplied daily with a speed of $0.33\%$ of total population per day. The total number of the available vaccine doses ranges from 1\% to 50\% of the population. We only observe to $50\%$ as the epidemic disappears after supplying around $50\%$ people with vaccines. ``Population weighted" and ``infection weighted" polices are defined as in Fig. \ref{fig: demo_allocation_small}. The data applied is about COVID-19 outbreak in NY on December 1st, 2020.  It can be observed that the optimal stabilizing vaccine allocation policy suggests to provide the vaccines to young adults (20-29) and middle age adults (30-44) firstly. The reason is that people of these two groups have relatively higher values of contact rate and transmission risk,  therefore they will be more likely to spread the epidemic. Thus giving vaccine priority for these groups can curb the epidemic most efficiently.
	}\label{fig: allocation_vary_vaccine_num}
\end{figure}


\begin{figure}[!htb]
	\centering
	\begin{minipage}[b]{0.22\linewidth}
		\centering
		\includegraphics[width=1.0\linewidth]{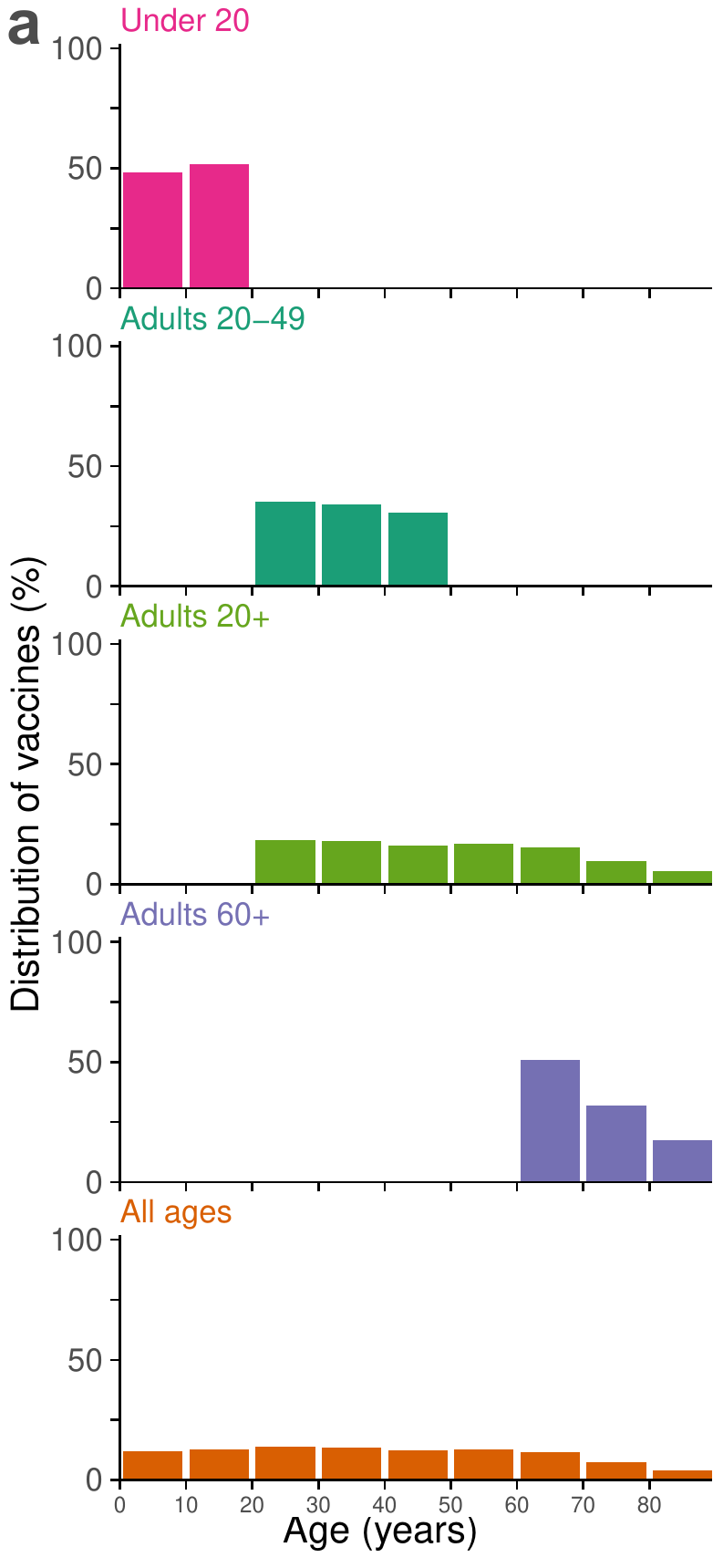}			
	\end{minipage}\hfill
	\begin{minipage}[b]{0.78\linewidth}
		\centering
		\includegraphics[width=1.0\linewidth]{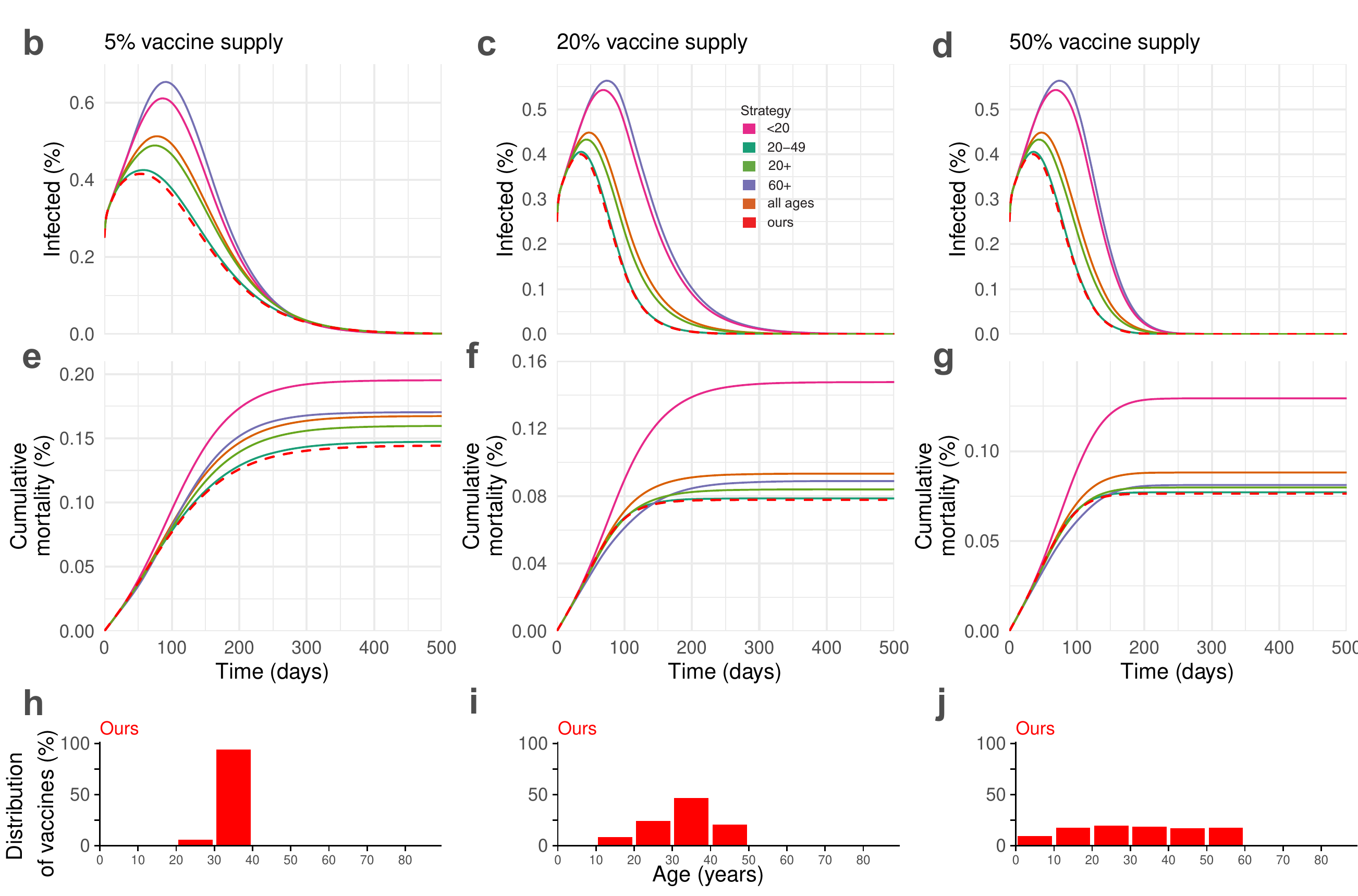}
	\end{minipage}%
	\centering
	\caption{{\bf Our method outperforms all the five age-stratified prioritization strategies in \cite{bubar2021model} when $R_0 = 1.15$.} {\bf a}, distribution of vaccines for the five age-stratified prioritization policies in \cite{bubar2021model}. {\bf b-g}, the estimated percentage of the infected cases and the cumulative mortality cases in the total population over 500 days. {\bf h-j}, distribution of vaccines of our method. In {\bf b-g}, the colors of the lines match with the polices, the red dashed line represents our policy. In {\bf b, e, h}, the total number of the available vaccine doses is 5\% of the population. In {\bf c, f, i}, the total number of the available vaccine doses is 20\% of the population. In {\bf d, g, j}, the total number of the available vaccine doses is 50\% of the population. All these results based on data and parameters for United States in \cite{bubar2021model}. The vaccines are supplied at 0.2\% of the total population per day. The vaccines are assumed to be all-or-nothing, transmission-blocking with 90\% efficacy.
	}\label{fig: R0_115_5_to_50}
\end{figure}

\begin{figure}[!htb]
	\begin{minipage}[b]{1.0\linewidth}
		\centering
		\includegraphics[width=1.0\linewidth]{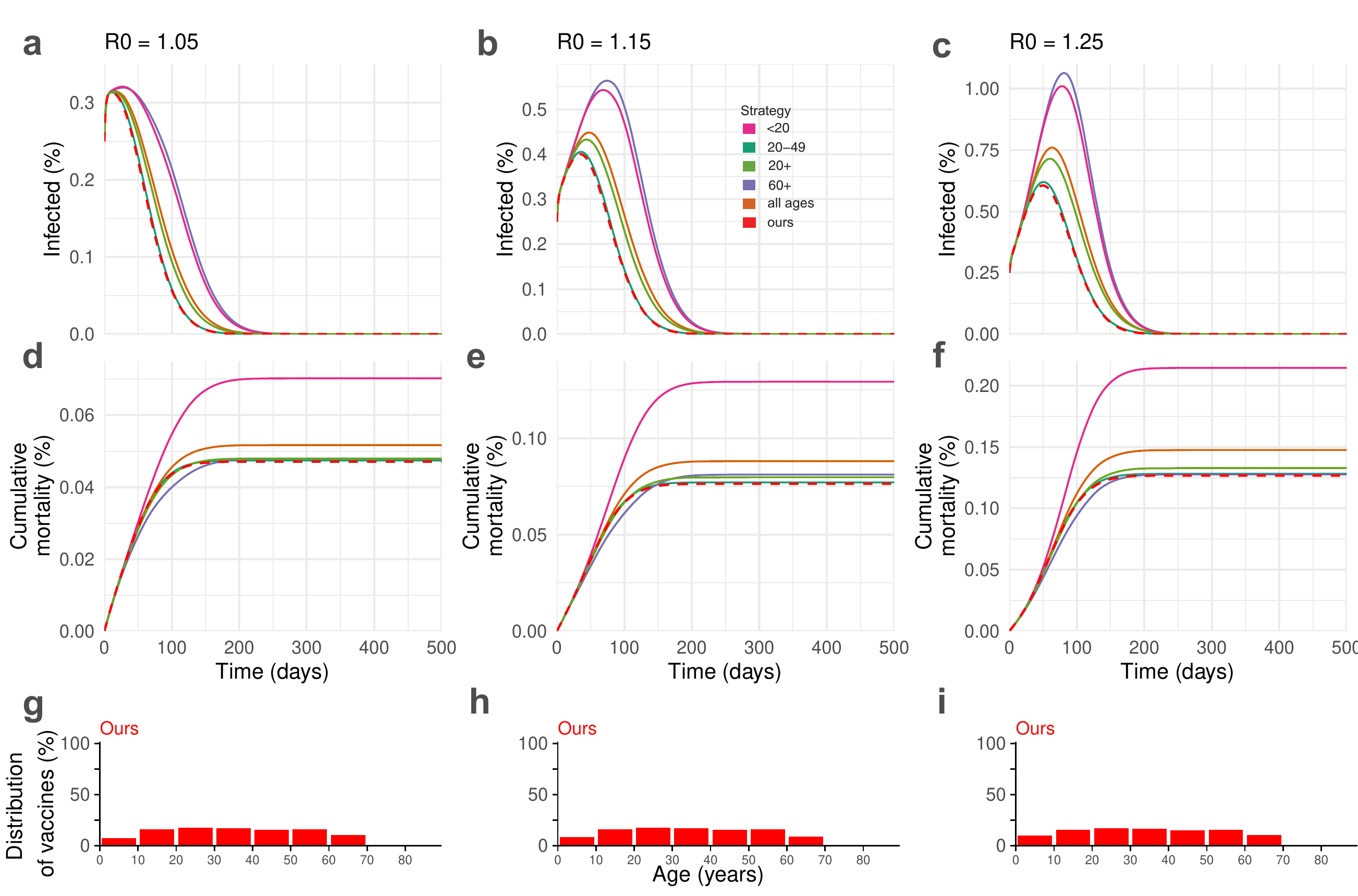}
	\end{minipage}%
	\centering
	\caption{{\bf If the vaccine supply is unlimited, and $R_0$ = 1.05, 1.15, or 1.25, our method outperforms all the five age-stratified prioritization strategies in \cite{bubar2021model}.} {\bf a-f}, the estimated percentage of the infected cases and the cumulative mortality cases in the total population over 500 days. {\bf g-i}, the distribution of vaccines of our method. In {\bf a-f}, the colors of the lines match with the polices, the red dashed line represents our policy. In {\bf a, d, g}, $R_0$ = 1.05. In {\bf b, e, h}, $R_0$ = 1.15. In {\bf c, f, i}, $R_0$ = 1.25. In this figure, the total vaccine supply is assumed to be 100\% of the population, in {\bf g-i}, we only show the distribution of vaccines before the disappearing of the epidemics, after that, the leftover vaccines would be evenly distributed to these age groups.  All these results based on data and parameters for United States in \cite{bubar2021model}. The vaccines are supplied at 0.2\% of the total population per day. The vaccines are assumed to be all-or-nothing, transmission-blocking with 90\% efficacy.
	}\label{fig: R0_105_115_125_100_percent}
\end{figure}

\clearpage

\noindent {\bf Author contributions.} All authors designed and did the research. Q.M. performed all the calculations and wrote the manuscript. Y.-Y.L and A.O. edited the manuscript.  
\bigskip

\noindent {\bf Competing interests statement.} The authors declare no competing interests. 

\nocite{*}
\bibliography{pandemic}{}
\bibliographystyle{abbrv}


\clearpage

\begin{center}
  \vskip 60pt
\large \MakeUppercase{\textbf{Optimal Stabilizing Vaccine Allocation for Epidemic Control}}\\
\MakeUppercase{\textbf{---Supplementary Information---}} 
  \vskip 1em
  \large QIANQIAN MA\footnote{Department of Electrical and Computer Engineering, Boston
University, Boston, MA USA},  \ 
YANG-YU LIU\footnote{Channing Division of Network Medicine, Department of Medicine, Brigham and Women's
Hospital, Harvard Medical School, Boston, MA 02115, USA}, 
and ALEX OLSHEVSKY \footnote{Department of Electrical and Computer Engineering and Division of
Systems Engineering, Boston University, Boston, MA USA}
  \vskip 20pt
\end{center}

\setcounter{section}{0}
\setcounter{figure}{0}

\section{Related work}
Our work is related to a number of recent papers motivated by the spread COVID-19, as well as some older work. This paper builds on our previous work \cite{ma2020optimal}, where we considered the problem of sending the epidemics to 0 with a specified decay rate while minimizing the economic cost. We used the same COVID-19 model as in \cite{ma2020optimal}, besides, we fix the decay rate of epidemics in both papers. There are two major differences between these two works. First, we aim to control the epidemics through the vaccines in this paper while in \cite{ma2020optimal} we used the lockdown policy. Second, we consider the demographic structure of the population in this work which was ignored in our previous work \cite{ma2020optimal}. 

 Our work has some similarities with the recent paper \cite{enayati2020optimal}, which considered the optimal vaccine distribution issue in a heterogeneous population with multiple age groups. They solved this issue by minimizing the number of the vaccines used while enforcing the effective reproduction number $R_t $ be bounded by 1. This is similar to what we study. The major difference is that they write this problem to a bilinear program,  and solved this using an iterating algorithm which involves two mixed-integer programs in each iteration. While in this paper, we wrote a similar problem as a SDP problem, which can be solved more efficiently.
 
 The recent work \cite{bubar2021model} studied the vaccine prioritization issue for different age groups, which is similar to the problem we consider. In this work, the authors estimated the cumulative cases, death cases of  several simple prioritization strategies based on SEIR model. One of their observations is similar to us, i.e., giving vaccines priority to young people can minimize the cumulative cases. The difference is that we also observed that the death cases can be minimized by giving young people vaccines firstly if  the number of the available vaccines is not very large. Besides, in work \cite{bubar2021model}, the strategy they consider is to give vaccines to a specific age group population, while in our case, we allow different age groups have different fraction of vaccines and we have proved our policy can achieve minimum decay rate of the epidemics.

\section{Mathematical Background} 
A matrix is called continuous time stable if all of its eigenvalues have nonpositive real parts. A matrix is called discrete time stable if all of its eigenvalues are upper bounded by one in magnitude. A central concern of this paper is to get certain quantities of interest (e.g., number of infected individuals) to decay at prescribed exponential rates. We will say that {\em $y(t)$ decays at rate $\alpha$ beginning at $t_0$} if $y(t) \leq y(t_0) e^{-\alpha t}$ for all $t \geq t_0$. Note that the decay in this definition  is not asymptotic but results in a decrease starting  at time $t_0$. 

We will associate to every matrix $A \in \R^{n \times n}$ the graph $G(A)$ corresponding to its nonzero entries: the vertex set of $G(A)$ will be $\{1, \ldots, n\}$ while $(i,j) \in G(A)$ if and only if $A_{ji}\neq 0$. Informally, $(i,j)$ is an edge in $G(A)$ when the variable $j$ ``is influenced by'' variable $i$. We will say that $A$ is strongly or weakly
connected if the graph $G(A)$ has this property.

\section{Analytical calculations}

\subsection{Construction of matrix $A'$}\label{sec: construction A'}
\quad

In this section, we present how we constructed the matrix $A'$ in the COVID-19 model \eqref{eq: covid 19, demographic} with demographic structures. First, we consider the infection rate of a susceptible individual of age group $a$ at location $i$. This individual can travel to any location of the network we consider, the fraction of the time this individual spent at location $l$ in a day is $\tau_{il}$. During this period, the individual may contact people from different age groups, suppose $C^l_{ab}$ is the mean number of contacts that this individual has with people of age group $b$ during a day at location $l$, $\beta^{\text s'}_i(a)$ is the probability with which a meeting between a susceptible individual of group $a$ from location $i$ and an symptomatic individual results in an infection. Then the rate that this individual be infected by symptomatic individuals of age group $b$ at location $l$ is proportional to $\tau_{il}C^l_{ab}\beta^{\text s'}_i(a)$.

Moreover, since the population of age group $b$ at location $l$ consists of healthy people and infected people. It is natually to assume that the infection rate is proportional to the fraction of infected people in this specific population, i.e., 
\[\frac{\hat{\alpha} \sum_{j = 1}^n N_j^*(b) \tau_{jl}x_j^{\rm a}(b) + \sum_{j = 1}^n N_j^*(b) \tau_{jl} x_j^{\rm s}(b)}{\sum_{k = 1}^{n} N_k^*(b)\tau_{kl}}, \]
where $\beta^{\text a'} = \hat{\alpha} \beta^{\text s'}$, $\hat{\alpha}$ is the discount factor, which captures the reduced risk of infection in meetings between suaceptible individual and asymptomatic (compare to symptomatic individual) individual, $N_j^*(b)$ is the population of the long-term residents of group $b$ at location $j$. 

As this individual can travel to any location of the network and can be infected by meeting with people from any age groups, then we can rewrite $\dot{x}^{\text a}_i$ in  \eqref{eq:COVID} as
\begin{equation}\label{eq: covid_19 demo 1}
	\dot{x}^{\text a}_i(a) = \sum_{l = 1}^n \sum_{b = 1}^{6}s_i(a) \tau_{il} \frac{\hat{\alpha} \sum_{j = 1}^n N_j^*(b) \tau_{jl}x_j^{\rm a}(b) + \sum_{j = 1}^n N_j^*(b) \tau_{jl} x_j^{\rm s}(b)}{\sum_{k = 1}^{n} N_k^*(b)\tau_{kl}}C_{ab}^l \beta^{\text s'}_i(a) - (\epsilon + r^{\text a})x_i^{\text a}.
\end{equation}

Next we consider the issues of projecting contact matrices to different demographic stuctures. The contact rate between different age groups are usually obtained via gathering empirical social contacts. Such empirical contact matrix is measured on a specific population, and should not be used directly. Literature \cite{arregui2018projecting} provided a method to transform the empirical contact matrix $C_{ij}$, which is measured for a specific demographic structure $N_i$, into a contact matrix $C'_{ij}$ that is compatible with a different demographic structure $N_i'$ as follows
\begin{equation}\label{eq: contact matrix}
	C'_{ij} = C_{ij} \frac{NN_j'}{N_jN'}.
\end{equation}
In the COVID-19 model, the demographic structure of each location is location-dependent, we will employ \eqref{eq: contact matrix} to construct the time-varying contact matrix for each location.

Suppose $\bar{C}$ is the empirical contact matrix with demographic structure $\bar{N}$. By using \eqref{eq: contact matrix}, the middle term inside the sum of \eqref{eq: covid_19 demo 1} can be rewritten as
\begin{equation}\label{eq: contact matrix mid term}
	\begin{aligned}
& \frac{\hat{\alpha} \sum_{j = 1}^n N_j^*(b) \tau_{jl}x_j^{\rm a}(b) + \sum_{j = 1}^n N_j^*(b) \tau_{jl} x_j^{\rm s}(b)}{\sum_{k = 1}^{n} N_k^*(b)\tau_{kl}}C_{ab}^l \\
	&=  \frac{\hat{\alpha} \sum_{j = 1}^n N_j^*(b) \tau_{jl}x_j^{\rm a}(b) + \sum_{j = 1}^n N_j^*(b) \tau_{jl} x_j^{\rm s}(b)}{\sum_{k = 1}^{n} N_k^*(b)\tau_{kl}} \bar{C}_{ab}\frac{\bar{N}}{\bar{N}_b}\frac{\sum_{k = 1}^{n} N_k^*(b)\tau_{kl}}{\sum_{b= 1}^6\sum_{k = 1}^{n} N_k^*(b)\tau_{kl}}\\
	&=  \bar{C}_{ab}\frac{\bar{N}}{\bar{N}_b} \frac{\hat{\alpha} \sum_{j = 1}^n N_j^*(b) \tau_{jl}x_j^{\rm a}(b) + \sum_{j = 1}^n N_j^*(b) \tau_{jl} x_j^{\rm s}(b)}{\sum_{b= 1}^6\sum_{k = 1}^{n} N_k^*(b)\tau_{kl}}
	\end{aligned}
\end{equation}

Let
\[
\bar{C}_{ab} \frac{\bar{N}}{\bar{N}(b)} = \Gamma_{ab},\quad
\sum_{b = 1}^{6}\sum_{k = 1}^{n} N_k^*(b)\tau_{kl} = m(l),
\]
then combine \eqref{eq: covid_19 demo 1}, \eqref{eq: contact matrix mid term} together, we have
\begin{equation}\label{eq: covid_19 demo 2}
	\begin{aligned}
			\dot{x}^{\text a}_i(a) &= \sum_{l = 1}^n \sum_{b = 1}^{6}s_i(a) \tau_{il}  \Gamma_{ab} \frac{\sum_{j = 1}^n \tau_{jl}(\hat{\alpha}x_j^{\rm a}(b) + x_j^{\rm s}(b))N_j^*(b)}{m(l)}  \beta^{\text s'}_i(a) - (\epsilon + r^{\text a})x_i^{\text a}\\
		&= s_i(a)\sum_{b = 1}^{6}\sum_{j = 1}^n \Gamma_{ab}\bar{A}_{ij}N_j^*(b)(\hat{\alpha}x_j^{\rm a}(b) + x_j^{\rm s}(b))\beta^{\text s'}_i(a) - (\epsilon + r^{\text a})x_i^{\text a},
	 \end{aligned}
\end{equation}
where $\bar{A}_{ij} = \sum_{l = 1}^{n} \frac{\tau_{il}\tau_{jl}}{m(l)}$. 

In matrix form, we can write \eqref{eq: covid_19 demo 2} as
\begin{equation*}
\dot{x}^{\text a} = {\rm diag}(\beta^{\text a '}){\rm diag}(s) A'x^{\text a} + {\rm diag}(\beta^{\text s '}){\rm diag}(s) A'x^{\text s} - (\epsilon + r^{\text a}) x^{\text a},
\end{equation*}
where
\textsf{\begin{equation*}
	A' = (\bar{A} \otimes \Gamma) {\rm diag}(N^*).
\end{equation*}}

\textbf{An illustrtive example.} To clearly demonstrate the construction of the matrix $A'$, we provide a numerical example on a small network consists of two nodes. Suppose the network consists of node 1 and node 2, and the population of each node belongs to either age group $a$ or age group $b$.  Let the data of the population be
\[N_1^*(a) = 80, ~ N_1^*(b)= 20, ~ N_2^*(a) = 100, N_2^*(b) = 100,\]
the trvel rate matrix $\tau$ be 
\[\tau = \left(\begin{matrix} 0.4 & 0.1\\
	0.1 & 0.4 \end{matrix} \right ),\]
the intrinsic connectivity matrix $\Gamma$ be
\[\Gamma = \left(\begin{matrix} 20 & 2\\
	2 & 4 \end{matrix} \right ).\]
In this case,  we have 
\begin{equation*}
	\begin{aligned}
		m(1) &= 80 \times 0.4 +20 \times 0.4 + 100\times 0.1 + 100 \times 0.1=60,\\
		m(2) &= 80 \times 0.1 +20 \times 0.1 + 100\times 0.4 + 100 \times 0.4=90.\\
	\end{aligned}
\end{equation*}
Next we can compute the matrix $\bar{A}$ as
\begin{align*}
\bar{A}
	=
	\begin{pmatrix} 
		 \frac{0.4\times 0.4}{60} +  \frac{0.1\times 0.1}{90} & \frac{0.4\times 0.1}{60} +  \frac{0.1\times 0.4}{90} \\[0.5cm]
		\frac{0.4\times 0.1}{60} +  \frac{0.1\times 0.4}{90} & \frac{0.1\times 0.1}{60} +  \frac{0.4\times 0.4}{90}
	\end{pmatrix} = \frac{1}{180}\begin{pmatrix} 
0.5 & 0.2 \\[0.3cm]
0.2 & 0.35
\end{pmatrix}.
\end{align*}
Thus we can obtain the matrix $A'$ as
\begin{align*}
A' &= (\bar{A} \otimes \Gamma) {\rm diag}(N^*)\\
&=\frac{1}{180}\begin{pmatrix}
10 & 1 & 4 & 0.4\\
1 & 2 & 0.4 & 0.8 \\
4 & 0.4 & 7 & 0.7\\
0.4 & 0.8 & 0.7 & 1.4
\end{pmatrix} \begin{pmatrix} 
	80 &  &  & \\
	 & 20 &  & \\
	 &  &  100 & \\
	 &  &  & 100
\end{pmatrix}\\
& = \frac{1}{9}\begin{pmatrix}
	40 & 1 & 20 & 2 \\
	4 & 2 & 2 & 4\\
	16 & 0.4 & 35 & 3.5 \\
	1.6 & 0.8 & 3.5 & 7
\end{pmatrix}.
\end{align*}

\subsection{The reduction of the optimal stabilizing vaccine allocation problem \eqref{eq: vaccine initial problem}.}\label{sec: reduction1}\quad 

In this section, we present the details about how to reduce the problem \eqref{eq: vaccine initial problem} to a SDP problem \eqref{eq: SDP}. Apply the result in the middle of proof for Lemma 11 in \cite{ma2020optimal}, the first constraint in \eqref{eq: vaccine initial problem} is equivalent to 
\begin{equation} \label{eq: vaccine constraint 1}
	{\rm diag}(s(t_0) - \psi v)Ab_1\text{ is discrete-time stable.} 
\end{equation}
where $b_1 = \frac{\beta^{\text s}\epsilon + \beta^{\text a}(r^{\text s} - \alpha)}{(\epsilon + r^{\text a} - \alpha)(r^{\text s} - \alpha)}$. Let $A = \bar{A} {\rm diag}(N_i^*)$, where $\bar{A}$ is defined as in \eqref{eq: bar_A}, then constraint \eqref{eq: vaccine constraint 1} can be written as 
\[{\rm diag}(N^*_i)b_1{\rm diag}(s(t_0) - \psi v)\bar{A} \text{ is discrete-time stable,} \]
since the nonzero eigenvalues of a product of two matrices do not change after we change the order in which we multiply them. Next, apply part 3 of Lemma 7 in \cite{ma2020optimal}, we can write this constraint as
\begin{align}\label{eq: vaccine constrain 2}
	\bar{A} - {\rm diag} \left(\frac{1}{N_i^*b_1(s_i(t_0) - \psi v_i)}\right)\text{ is continuous-time stable.}   
\end{align}
To further simplify this constraint, we use the fact that $A \succcurlyeq B$ is equivalent to $B^{-1} \succcurlyeq A^{-1}$ if both matrices $A$ and $B$ are positive definite. Apparently,  $\bar{A}$ is a positive definite matrix. Then by using this proposition, we can write constraint \eqref{eq: vaccine constrain 2} as
\[{\rm diag}\left(N_i^*b_1(s_i(t_0) - \psi v_i)\right) - \bar{A}^{-1} \text{ is continuous-time stable}.\]
Thus, problem \eqref{eq: vaccine initial problem} can be denoted as
\begin{equation*}
	\begin{aligned}
		\min_{v_i}\quad & \sum_i N_i^*v_i\\
		s.t.\quad & {\rm diag}\left(N_i^*b_1(s_i(t_0) - \psi v_i)\right)\preccurlyeq \bar{A}^{-1}\\
		&0\leq v_i \leq s_i(t_0),~ i = 1,\ldots, n.
	\end{aligned}
\end{equation*}

\subsection{The reduction of the optimal stabilizing vaccine allocation problem \eqref{eq: vaccine problem general 0}}\label{sec: reduction2}~

By using part 2 of Lemma 7 in \cite{ma2020optimal}, we can write problem \eqref{eq: vaccine problem general 0} as the following bilinear programming problem
\begin{equation}\label{eq: vaccine problem general 1}
	\begin{aligned}
		\min_{v_i}\quad & \sum_i N_i^*v_i\\
		s.t.\quad & ({\rm diag}(s(t_0) - \psi v)Ab_1) d \leq d, \\
		& \sum_{i = 1}^{n} d_i =  1,\\
		&0\leq v_i \leq s_i(t_0),~ i = 1,\ldots, n,\\
		& d_i \geq 0, ~ i = 1,\ldots, n.
	\end{aligned}
\end{equation}
This problem has been studied in literature\cite{enayati2020optimal}, which solved the issue by using two mixed-integer programs. Problem \eqref{eq: vaccine problem general 1} can also be solved by using some nonlinear programming solver like SNOPT\cite{gill2005snopt}, BARON\cite{ts:05}, and GUROBI\cite{gurobi}.

\section{Empirical data analysis}\label{sec: data sources}
\subsection{Data \& parameters for COVID-19 model}\label{sec: data covid_19}
\quad

\textbf{Initial rates ($s(t_0), x^{\text a}(t_0), x^{\text s}(t_0), e(t_0), h(t_0)$).} We get the cumulative confirmed cases in county level of NY on Dec. 1st, 2020 from the dataset provided in \cite{susceptible}. Besides, according to CDC data\cite{reporting_rate}, 1 in 4.6 total COVID-19 infections were reported (or 0.217 reporting factor). Therefore we let the number of the cumulative confirmed cases of county $i$ be $\frac{I_i(t_0)}{0.217}$, where $I_i(t_0)$ represents the number of the cumulative confirmed cases of county $i$ at time point $t_0$. Moreover, we get the number of cumulative death cases in each county of NY on Dec. 1st, 2020  from New York Times \cite{death_rate}. We use $E_i(t_0)$ to denote the cumulative death cases of county $i$. For the truly recovered people of the COVID-19, unfortunately, we can not find specific numbers for each county in New York State. We also can not get the specific number of the 
asymptomatic cases and symptomatic cases of each county.

However, we learn from \cite{total_recovery} that the total number of recovered cases, death cases, and cumulative cases in USA on Dec. 1st, 2020 are 8333018, 276976, 14108490, respectively. Since the cumulative cases of each county also consists of the recovered cases, death cases, and active cases, we assume the ratio of the recovered cases to the sum of the recovered cases and the active cases of county $i$ is proportional to the number $8333018/(14108490 - 276976)$. Then we have
\[ H_i(t_0) = \left(\frac{I_i(t_0)}{0.217} - E_i(t_0)\right)\frac{8333018}{(14108490-276976)}.\]

Moreover, we learn from CDC \cite{symptom_rate} that 81\% COVID-19 patients have mild to moderate symptoms and 19\% patients have severe to critical symptoms. We also assume that the number of the asymptomatic cases and the symptomatic cases of county $i$ satisfies such ratio. Then we can get
\[X_i^{\rm a}(t_0) =  0.81\left(\frac{I_i(t_0)}{0.217} - E_i(t_0) - H_i(t_0)\right),\]
and
\[X_i^{\rm s}(t_0) =  0.19\left(\frac{I_i(t_0)}{0.217} - E_i(t_0) - H_i(t_0)\right),\]
where $X_i^{\rm a}(t_0)$, $X_i^{\rm s}(t_0)$ denotes the number of the asymptomatic cases and the symptomatic cases of county $i$ respectively. Finally, we let the initial rates of county $i$ be 
\begin{align*}
	s_i(t_0) &= 1- \frac{I_i(t_0)}{0.217N_i},\\
	e_i(t_0) & = \frac{E_i(t_0)}{N_i},\\
	h_i(t_0) & = \frac{H_i(t_0)}{N_i},\\
	x_i^{\rm a}(t_0) &= \frac{X_i^{\rm a}(t_0)}{N_i},\\
	x_i^{\rm s}(t_0) &= \frac{X_i^{\rm s}(t_0)}{N_i}.
\end{align*}

\textbf{Populations ($N_i$).} To define the populations of each node in the network, we adopt the 2010 Census Bureau data \cite{population} at the level of the counties in the New York state. 

\medskip

\textbf{Travel rate ($\tau_{ij}$).} To construct matrix $A$ for the COVID-19 model, we need travel rate matrix $\tau$, where $\tau_{ij}$ represents the rate at which an individual travels from location $i$ to location $j$. We use the Social Distancing Metrics dataset \cite{travel} from SafeGraph to generate $\tau$. This dataset was collected using a panel of GPS pings from anonymous mobile devices, and it is based on Census Block Group levels. For each device/individual, the dataset identifies a ``home'' CBG, and the median daily home-dwell-time is provided for each CBG. Additionally, this dataset provides the daily number of trips that the people go from their home CBG to various destination CBGs. 

In our empirical simulations, we only consider the network of New York State (i.e.,  we do not consider the trips to places outside the New York State). For each node, we aggregate the number of trips to the county level and obtain the number of trips from one node to another. We can also obtain the home-dwell-time of each node as the median of the home-dwell-time among all the CBGs (daily median home-dwell-time) in this county. Then, we define $\tau_{ij} = (1 - \frac{W_i}{1440})\cdot \frac{k_{ij}}{\sum_a k_{ia}}$, where $W_i$ is the home-dwell-time of node $i$ (measured in minutes), $k_{ia}$ is the number of trips from node $i$ to node $a$. We divide $h_i$ by $1440$ because the latter is the total number of minutes in a day. 

\textbf{Symptom rate $\epsilon$, recovery rate $r^{\text a}$, $r^{\text s}$, and mortality rate $\kappa$.} To obtain the disease parameters $\epsilon$, $r^{\text a}$, $r^{\text s}$, and $\kappa$ in our COVID-19 model, we first introduce some other important parameters in an epidemic. The infection mortality rate (IFR) is the ratio of mortality to the total infections \cite{levin2020assessing}. 
Besides, we use $d_A$, $d_S$ to denote the asymptomatic infectious period and symptomatic infectious period, respectively.

We use the estimation in \cite{levin2020assessing} to obtain the infection mortality rate of the COVID-19. For individuals of age $j$, we have
\begin{equation}\label{eq: mortality rate}
	\log_{10}({\rm IFR}_j) = -3.27 +0.0524 \times j,
\end{equation}
where ${\rm IFR}_j$ represents the infection mortality rate of age $j$. For COVID-19 model without considering te demographic structure, we let the infection mortality rate be the average of different ages,  which is ${\rm IFR}' = \sum_{j =0}^{89}{{\rm IFR}_j}/ 90 = 0.0242.$

We found the values of $d_A$, $d_S$ are different in different references. The detailed values of $d_A$, $d_S$ in  \cite{birge2020controlling,giordano2020modelling,bertozzi2020challenges,bubar2021model,luciana2020,bertsimas2020optimizing} are presented in Table S\ref{tb: d_A d_S}. We let $d_A$, $d_S$ be the median value of the estimation in these references, i.e., $d_A = 5.0025$, $d_S = 6.2475$.

Moreover, we learn from CDC\cite{symptom_rate} that 81\% COVID-19 patients have mild to moderate symptoms and 19\% patients have severe to critical symptoms. Then we can derive 
\begin{align}\label{eq: disease parameters}
	\epsilon + r^{\rm a} &= \frac{1}{d_A}, \nonumber\\
	r^{\rm s} + \kappa & = \frac{1}{d_S}, \nonumber\\
	\frac{\epsilon}{\epsilon + r^{\rm a}} &= \frac{19}{81}, \nonumber\\
	\frac{19}{81} \times \frac{\kappa}{\kappa + r^{\rm s}} &= {\rm IFR}'.
\end{align}
 for our COVID-19 model \eqref{COVID_19}.
Thus we can compute the values of the parameter $\epsilon$, $r^{\rm a}$, $r^{\rm s}$, $\kappa$ from
\eqref{eq: disease parameters} as
\[\epsilon = 0.0469, ~r^{\rm a} = 0.153, ~\kappa = 0.0165, ~r^{\rm s} =  0.1436.\]

\textbf{Transmission rate ($\beta^{\text a}$, $\beta^{\text s}$).} Since we already have the parameter $r^{\text a}$, $r^{\text s}$, $\epsilon$, and $\kappa$, we choose the transmission rate $\beta^{\text a}$, $\beta^{\text s}$ to match the effective reproduction number $R_t$ of NY. Our first step is to let  $\beta^{\text a} = \hat{\alpha} \beta^{\text s}$ and assume we can reuse $\hat{\alpha}$ from the existing literature \cite{giordano2020modelling}, as this scalar measures the transmission rate difference of symptomatic individuals and asymptomatic individuals. Thus we only need to decide how to choose $\beta^{\text s}$. Our second step is to choose $\beta^{\text s}$ to match the the effective reproduction number $R_t$ of COVID-19 model to the $R_t$ of NY, which we obtained from website \cite{Rt_live}. This is  $\lambda_{\rm max}(LD^{-1})$, where
\[L = \left(\begin{matrix}\beta^{\text a} {\rm diag}(s(t_0))A & \beta^{\text s} {\rm diag}(s(t_0))A \\
	0 & 0 \end{matrix} \right), \]
and
\[ D = \left(\begin{matrix}\epsilon + r^{\text a} & 0 \\
	-\epsilon & r^{\text s} + \kappa \end{matrix} \right) .\]
We can write $C(t_0) = L - D$,  where $C(t_0)$ is defined in \eqref{COVID_19}.

\textbf{Efficacy ($\psi$).} Currently, the majority of the available vaccines in the United States come from Pfizer-BioNTech and Moderna. According to literature, the Pfizer vaccine was 95\% effective in preventing COVID-19, while the Moderna vaccine was 94.1\% effective in preventing COVID-19. The efficacy of the two types of vaccines are essentially equivalent. In our simulations on NY data, we simply let the efficacy $\psi = 0.95$.

\subsection{Data \& parameters for COVID-19 model with demographic structures}\label{sec: data covid_19_demo}
\quad

\textbf{Contact matrix ($C$).} When we consider the demographic structure for COVID-19 model, we need use the contact matrix to quantify the contact intensities between different age groups. However, the empirical contact matrix obtained by gathering social contacts is not available for regions in the United States. Therefore, we use the estimated contact matrix for NY in literature \cite{mistry2021inferring},  where the population is divided into 85 partitions. In this work, we consider six age groups, our first step is to combine the 85 age groups into 6 age groups we consider and rewrite the contact matrix in \cite{mistry2021inferring} to a $6\times 6$ contact matrix $C$. Then we use the method in \cite{arregui2018projecting} to get the intrinsic connectivity matrix $\Gamma$ as follows
\begin{equation}\label{eq: NY_Gamma}
\Gamma = M {\rm diag}\left(\frac{N}{N_i}\right) =  \left(\begin{matrix}
	22.9768  &  15.3439 &   9.1141 &  11.3077  &  4.5509  &  3.2704\\
	15.3439 &  54.2639  &  9.7226  &  11.5955  &  8.6947  &  3.9597\\
	9.1141  &  9.7226  & 28.8528  &  14.7380 &  13.7316  &  5.1510\\
	11.3077 &  11.5955 &  14.7380 &  18.0776  & 12.9846  &  5.4702\\
	4.5509  &  8.6947 &  13.7316 &  12.9846 &  15.6485  &  6.3227\\
	3.2704  &  3.9597  &  5.1510  &  5.4702  &  6.3227  &  15.2828
\end{matrix}\right),
\end{equation}
where $N_i$ is the number of population of age group $i$, $N$ is the total population.

\textbf{Symptom rate $\epsilon$, recovery rate $r^{\text a}$, $r^{\text s}$, and mortality rate $\kappa$.} If we consider the demographic structure of the COVID-19 model, we assume that the infection mortality rate of different age groups are different. We still used the estimation in \cite{levin2020assessing} to generate the infection mortality rate. We computed
\[{\rm IFR}_a' = \frac{\sum_j {\rm IFR}_j}{l},\]
where ${\rm IFR}_a'$ is the infection mortality rate of age group $a$, $j$ are the ages corresponding to group $a$, IFR$_j$ is defined in \eqref{eq: mortality rate}, $l$ is the length of the age range of group $l$. In this case, ${\rm IFR}'$ is a vector rather than scalar. Next, we also used the equations \eqref{eq: disease parameters} to compute the values of $\epsilon$, $r^{\text a}$, $r^{\text s}$, $\kappa$, i.e.,
\[\epsilon = 0.0469, ~r^{\rm a} = 0.153, ~\kappa = \mathbf{1}\otimes \left( \begin{matrix}
	0.0000047\\
	0.000018\\
	0.000075\\
	0.00036\\
	0.0033\\
	0.0565
\end{matrix}\right), ~r^{\rm s} =  \mathbf{1}\otimes \left( \begin{matrix}
0.1601\\
0.1600\\
0.1600\\
0.1597\\
0.1568\\
0.1035
\end{matrix}\right),\]
where $\mathbf{1} \in \mathbb{R}^{n\times 1}$ is the all-ones vector.

\textbf{Transmission risk ($\beta^{\text a'}$, $\beta^{\text s'}$).} The choice of the parameters $\beta^{\text a'}$, $\beta^{\text s'}$ is similar to the transmission rate $\beta^{\text a}$, $\beta^{\text s}$. First, we let
\begin{align*}
	\beta^{\rm s'} &= \beta\left( \mathbf{1}\otimes\beta_0\right),\\
	\beta^{\rm a'} &= \hat{\alpha}\beta\left( \mathbf{1}\otimes\beta_0\right),
\end{align*}
where $\mathbf{1}\in\mathbb{R}^{n\times 1}$ is the all-ones vector, $\beta_0 \in \mathbb{R}^{6\times 1}$ measures the difference of the transmission risk between different age groups, $\beta$ is a scalar.
We still assume that we can reuse $\hat{\alpha}$ from the existing literature \cite{giordano2020modelling}. We get the value of $\beta_0$ from Extended Data Fig. 4 in \cite{davies2020age}. The division of the age groups in \cite{davies2020age} is different from ours. To deal with this, we assume all the ages in each age group of \cite{davies2020age} have the same mean value of the transmission risk, then we compute $\beta_{0}(a)$ as the average of the transmission rate in group $a$. In this case, we can get
\[\beta_0 = \left(\begin{matrix}
	0.400\\
	0.387\\
	0.790\\
	0.840\\
	0.830\\
	0.768
\end{matrix}\right).\]
Next we choose the scalar $\beta$ to match the effective reproduction number $R_t$ of NY \cite{Rt_live}, where $R_t$ can be computed with a similar method.

\textbf{Initial rates ($s(t_0), x^{\text a}(t_0), x^{\text s}(t_0), e(t_0), h(t_0)$).} The number of confirmed cases and death cases of each age group in county level for NY is not available, therefore we still use the data from CDC \cite{cdc_data_tracker} to estimate these numbers. First, we get the confirmed cases and death cases in county level of NY on Dec. 1st, 2020 from \cite{susceptible,death_rate}. Suppose $f_c(a)$ ($f_d(a)$) is the empirical ratio of confirmed (death) cases of age group $a$ to the total confirmed (death) cases we calculated from the data in \cite{cdc_data_tracker}. Then we let 
\begin{equation*}
\begin{aligned}
	I_{i,a}'(t_0) & = I_i(t_0) f_c(t_0), \\
	E_{i,a}'(t_0) & = E_i(t_0) f_d(t_0), 
\end{aligned}
\end{equation*}
where $I_{i,a}'(t_0)$ ($E_{i,a}'(t_0)$) is the number of confirmed (death) cases of age group $a$ at location $i$,  $I_i(t_0)$ ($E_i(t_0)$) is the number of confirmed (death) cases of location $i$.  Next, we use the similar method as in Sec. \ref{sec: data covid_19} to compute the initial rates $s(t_0), x^{\text a}(t_0), x^{\text s}(t_0), e(t_0), h(t_0)$.\\

All the other data and parameters used in COVID-19 model with demographic structure are the same as in Sec. \ref{sec: data covid_19}.

\section{Two-nodes network model}\label{sec: two nodes}
We now revisit the phenomenon we have observed in our analysis of NY,  which is that the optimal stabilizing vaccine allocation tends to assign zero vaccines to the counties in NYC, Long Island, and Mid-Hudson. To isolate this phenomenon in the simplest possible setting, we implement a simple synthetic experiment of a network with two nodes. 

We assume the number of the available vaccines is 10\% of the total population,  $R_t = 1.0697$, which equals to the value of $R_t$ of NY on Dec. 1st, 2021. Then we choose $\beta^{\text a}$, $\beta^{\text s}$ to match $R_t$ of the COVID-19 model to this value. All the other disease parameters ($\epsilon$, $\kappa$, $r^{\text a}$, $r^{\text s}$, $\psi$) choose the same values as in Sec. \ref{sec: data covid_19}.  For the choice of the travel rate matrix $\tau$, we choose a matrix that is similar to the NY data, but with rounder numbers; specifically,  we define $\tau_{ij} = (1- \frac{h_i}{1440})\cdot \frac{k_{ij}}{\sum_a k_{ia}}$, where $h_i$ is the home-dwell-time of node $i$, $k_{ia}$ is the number of trips from node $i$ to node $a$, and we let 
\[k = \left[\begin{matrix} 8000 & 200\\
	200 & 8000 \end{matrix} \right ].  \]
We consider four different cases:
\begin{itemize}
	\item Case 1: population$=[200,000 \quad 2,000]$,  $h =  [800 \quad 800]$, $s(t_0) = [0.9 \quad 0.9]$.
	
	\item Case 2: population$=[2,000 \quad 2,000]$,  $h =  [800 \quad 800]$, $s(t_0) = [0.7 \quad 0.9]$.
	
	\item Case 3: population$=[2,000 \quad 2,000]$,  $h =  [1,000 \quad 800]$, $s(t_0) = [0.9 \quad 0.9]$.
	
	\item Case 4: population$=[200,000 \quad 2,000]$,  $h =  [1,000 \quad 800]$, $s(t_0) = [0.7 \quad 0.9]$.  
\end{itemize}
Case 1,2,3 are designed to observe the impact of the population, initial susceptible rate, and the home-dwell-time to the value of optimal stabilizing vaccine allocation rate $v_i^*$. Case 4 is designed to mimic the situation in NY, where node 1 is similar to the counties in NYC, Long Island, and MidHudson which has larger values of population, home-dwell-time, and smaller values of initial susceptible rate. Then we apply the proposed algorithm to design the optimal stabilizing vaccine allocation policy for this two-nodes network model. 

The simulation results are presented in \stref{tab: two-nodes}. We can see the value of $v_i^*$ is sensitive to the home-dwell-time and the initial susceptible rate, but not sensitive to the population. We also see the same phenomenon as in our NY simulations in Case 4: the optimal stabilizing vaccine allocation policy gives priority to node 2, even though epidemics mainly localized in the node 1.

\subsection*{Sensitivity analysis.} To further study the impact of home-dwell-time, initial susceptible rate, and the population to the value of $v_i^*$, we implement sensitivity experiments. In each experiment, we vary the value of one parameter of node 1 while fix the values of the others. The normal values of the data are chose as:
\[\text{ population}=[2,000 \quad 2,000], \quad h =  [800 \quad 800], \quad s(t_0) = [0.7 \quad 0.9]. \]
All the other data and parameters are set as before. The simulation results are presented in Fig. S\ref{fig: small_network_sensitivity}. 

As expected, it can be observed from Fig. S\ref{fig: small_network_sensitivity} that the optimal stabilizing vaccine allocation policy assigns zero vaccines to location with larger value of home-dwell-time, or smaller value of the initial susceptible rate. Besides, the value of $v_i^*$ is not sensitive to the population at all.

\section{optimal stabilizing vaccine allocation design on model in \cite{bubar2021model}}\label{sec: comparion with bubar}
In this section, we present the details about the optimal stabilizing vaccine allocation design for model in literature \cite{bubar2021model}. The epidemic model considered in \cite{bubar2021model} can be summarized as
\begin{equation}\label{eq: model_bubar}
	\begin{aligned}
		\dot{S}_i & = -\lambda_i S_i\\
		\dot{S}_{x,i} &= -\lambda_i S_{x,i} \\
		\dot{S}_{v,i} &= 0\\
		\dot{E}_i & = \lambda_i S_i - d_E^{-1} E_i\\
		\dot{E}_{x,i} & = \lambda_i S_{x,i} - d_E^{-1} E_{x,i}\\
		\dot{E}_{v,i} & = d_E^{-1} E_{v,i}\\
		\dot{I}_i & = d_E^{-1} E_i -d_I^{-1} I\\
		\dot{I}_{x,i} & = d_E^{-1} E_{x,i} -d_I^{-1} I_{x,i}\\
		\dot{I}_{v,i} & = d_E^{-1} E_{v,i} -d_I^{-1} I_{v,i}\\
		\dot{R}_i & = d_I^{-1}(1 - IFR) I_i\\
		\dot{R}_{x,i} & = d_I^{-1}(1 - IFR) I_{x,i}\\
		\dot{R}_{v,i} & = d_I^{-1}(1 - IFR) I_{v,i}\\
		\dot{D}_i & = d_I^{-1} IFR I_i + d_I^{-1} IFR I_{x,i} + d_I^{-1} IFR I_{v,i},
	\end{aligned}
\end{equation}
where $S,~E,~ I, ~R, ~D\in\mathbb{R}^{d\times 1}$ ($d$ is the number of age groups) represents the susceptible, exposed, infectious, recovered, and died compartments; subscripts of $v$ and $x$ denote those who have been vaccinated with protection, and those who will either not be vaccinated (vaccine refusal or positive serotest) or have been vaccinated but without protection, respectively; scalar $d_E,~d_I$ represents the length of the latent period and th infectious period; $IFR\in \mathbb{R}^{d\times 1}$ represents the fatality rate; $\lambda_i$ is the force of infection for a susceptible individual in age group $i$, which is defined as
\[\lambda_i = u_i \sum_j c_{ij} \frac{I_j + I_{v_j} + I_{x_j}}{N_j - \Omega_j }, \]
where $u_i$ is the transmission risk of a contact with an infectious individual for an individual in age group $i$, $c_{ij}$ is the number of the age-$j$ individuals that an age-$i$ individual contacts per day, $N_j$ is the total population in group $j$, and $\Omega_j$ is the number of individuals from group $j$ who have died.

We will consider the all-or-nothing vaccine model in \cite{bubar2021model}, which assumes a fraction $\psi$ of vaccinated individuals are perfectly protected while the remaining $1- \psi$ individuals gain no protection. Under this assumption, after supplying vaccines to the population, we can get
\begin{equation*}
	\begin{aligned}
		S_i(t_0) &\rightarrow S_i(t_0) - v_i S_i(t_0),\\
		V_i(t_0) & \rightarrow V_i(t_0) + v_i S_i(t_0),\\
		S_{x, i}(t_0) &\rightarrow S_{x, i}(t_0) + (1- \psi)v_iS_i(t_0),
	\end{aligned}
\end{equation*}
where $V$ represents the compartment of vaccinated with protection, 
\[v_i = \frac{\text{\# vaccine doses for group i}}{S_i(t_0) + I_i(t_0) + R_i(t_0)}.\]

Next, we will use a similar analysis method to solve the optimal stabilizing vaccine allocation problem for model \eqref{eq: model_bubar}. As $\lambda_i$ in \eqref{eq: model_bubar} can be written in matrix form as follows
\[\lambda = A(I +  I_x + I_v),\]
where $A = D_u C D_{N - \Omega}$, $D_u = {\rm diag}(u_i)$, $C$ is the contact matrix, $D_{N- \Omega} = {\rm diag}\left(\frac{1}{N_i -\Omega_i}\right).$ Then we can write \eqref{eq: model_bubar}  in matrix form as
\begin{equation} \label{eq: matrix form bubar}
	\renewcommand{\arraystretch}{1.5}
	\left( 
	\begin{array}{c} 
\dot{S}\\ \dot{S}_x \\
\dot{S}_v\\
 \dot{E} \\ \dot{E}_x \\ \dot{E}_v \\ \dot{I} \\ \dot{I}_x \\ \dot{I}_v
	\end{array} 
	\right)= 
	\begin{tikzpicture}[baseline=-\the\dimexpr\fontdimen22\textfont2\relax ]
		\matrix (m)[matrix of math nodes, left delimiter=(,right delimiter=)]
		{ 0 & 0 & 0 & 0 & 0 & 0 & -{\rm diag}(S)A & -{\rm diag}(S)A & -{\rm diag}(S)A\\
		0 & 0 & 0 & 0 & 0 & 0 & -{\rm diag}(S_x)A & -{\rm diag}(S_x)A & -{\rm diag}(S_x)A\\
		0 & 0 & 0 & 0 & 0 & 0 & 0 & 0 & 0\\
		0 & 0 & 0 & -d_E^{-1} & 0 & 0 & {\rm diag}(S)A & {\rm diag}(S)A & {\rm diag}(S)A \\
0 & 0 & 0 &  0 & -d_E^{-1} & 0 & {\rm diag}(S_x)A & {\rm diag}(S_x)A & {\rm diag}(S_x)A \\
0 & 0 & 0 &  0 & 0 & -d_E^{-1} & 0 & 0 & 0 \\
0 & 0 & 0 & d_E^{-1} & 0 & 0 & -d_I^{-1} & 0 & 0\\
0 & 0 & 0 & 0 & d_E^{-1} & 0 & 0 & - d_I^{-1} & 0\\
0 & 0 & 0 & 0 & 0 & d_E^{-1} & 0 & 0 & -d_I^{-1}\\
		};
		\begin{pgfonlayer}{myback}
			\highlight[gray]{m-4-4}{m-9-9}
		\end{pgfonlayer}
	\end{tikzpicture}
	\left( 
	\begin{array}{c} 
		S\\S_x\\ S_v\\
E \\ E_x \\ E_v \\ I \\ I_x \\ {I}_v
	\end{array} 
	\right).
\end{equation}

We can see that the model \eqref{eq: matrix form bubar} has a similar form as the model \eqref{COVID_19}. By replacing the asymptomatic compartment in COVID-19 model as the exposed compartment in model \eqref{eq: matrix form bubar}, we can derive a similar conclusion as the Proposition 2 in \cite{ma2020optimal}, i.e., there exists a positive linear combination of $\{E_{i}, E_{x,i}, E_{v,i}, I_i, I_{x,i}, I_{v,i}\}$ which decays at rate $\alpha$ starting at time $t_0$ if $\lambda_{\rm max}(N(t_0))<\alpha$, where $N(t)$ is the submatrix outlined in a box in \eqref{eq: matrix form bubar}. Thus, we can formulate the optimal stabilizing vaccine allocation problem for model \eqref{eq: matrix form bubar} as minimizing the number of the vaccine doses while keeping the decay rate $\alpha$ fixed, that is
\begin{equation}\label{eq: bubar problem 1}
	\begin{aligned}
		\min_{v_i}\quad & \sum_i (S_i(t_0)+ I_i(t_0) +R_i(t_0))v_i\\
		s.t.\quad &\lambda_{\rm max}(N(t_0)) \leq -\alpha\\
		& 0\leq v_i \leq 1,~ i = 1,\ldots, n.
	\end{aligned}
\end{equation}
Next, we provide two versions of the reduction of problem \eqref{eq: bubar problem 1} depending the positive definiteness of the contact matrix $C$.

\begin{proposition}\quad \label{pro: reduction problem of bubar}
	\begin{enumerate}
		\item For general contact matrix $C$, problem \eqref{eq: bubar problem 1} can be written as the following bilinear optimization problem
		\begin{equation}\label{eq: bubar problem 2}
			\begin{aligned}
				\min_{v_i}\quad & \sum_i (S_i(t_0)+ I_i(t_0) +R_i(t_0))v_i\\
				s.t.\quad &b_1 {\rm diag}(S_i(t_0) + S_{x, i}(t_0) - \psi  v_i S_i(t_0))A d \leq d,\\
				& \sum_i d_i = 1,\\
				& 0\leq v_i \leq 1,~ i = 1,\ldots, n.\\
				& d_i > 0, ~ i = 1, \ldots, n.
			\end{aligned}
		\end{equation}
	\item If the matrix $\bar{A} = C D_{N- \Omega}$ is positive definite
	then problem \eqref{eq: bubar problem 1} can be written as the following SDP problem 
\begin{equation}   \label{eq: bubar problem 3}
	 \begin{aligned}
		\min_{v_i}\quad & \sum_i (S_i(t_0)+ I_i(t_0) +R_i(t_0))v_i\\
		s.t.\quad & {\rm diag}(b_1 u_i (S_x(t_0) + (1 - \psi v) S(t_0))) \preccurlyeq \bar{A}^{-1} 
		\\
		& 0\leq v_i \leq 1,~ i = 1,\ldots, n.
	\end{aligned}
\end{equation}

	\end{enumerate}
\end{proposition}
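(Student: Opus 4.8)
The plan is to replay, step for step, the two reductions already carried out for the network COVID-19 model in Sections~\ref{sec: reduction1} and~\ref{sec: reduction2}, the only extra work being bookkeeping for the additional ``$S_x$'' and ``$v$'' streams of \eqref{eq: model_bubar}. Begin from the feasibility constraint $\lambda_{\rm max}(N(t_0))\le-\alpha$ of \eqref{eq: bubar problem 1}, with $N(t)$ the boxed $6n\times 6n$ submatrix of \eqref{eq: matrix form bubar}. First apply the all-or-nothing vaccination update, which inside $N(t_0)$ replaces ${\rm diag}(S)$ by ${\rm diag}((1-v)S(t_0))$ and ${\rm diag}(S_x)$ by ${\rm diag}(S_x(t_0)+(1-\psi)v\,S(t_0))$; note the \emph{sum} of these two diagonals is ${\rm diag}\!\big(S_x(t_0)+(1-\psi v)S(t_0)\big)$, which is the only combination that will end up mattering. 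Then split $N(t_0)=F-V$, with $F\ge 0$ the new-infection part (nonzero only in the $E$- and $E_x$-block rows, in the $I,I_x,I_v$-block columns) and $V$ the transition M-matrix (diagonal blocks $d_E^{-1}I$ and $d_I^{-1}I$, off-diagonal blocks $-d_E^{-1}I$ for the $E\!\to\!I$ transfers). Here $V-\alpha I$ is a nonsingular M-matrix whenever $\alpha<\min(d_E^{-1},d_I^{-1})$, which must hold in any feasible instance since the $E_v$- and $I_v$-blocks of $N(t_0)$ already contribute eigenvalues $-d_E^{-1}$ and $-d_I^{-1}$.

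\textbf{Collapsing the eigenvalue constraint.} After shifting by $\alpha$, the constraint reads $\lambda_{\rm max}\!\big(F-(V-\alpha I)\big)\le 0$, which---by the same next-generation argument used inside the proof of Lemma~11 of \cite{ma2020optimal} (together with parts~1--3 of Lemma~7 there)---is equivalent to $F(V-\alpha I)^{-1}$ being discrete-time stable. I would compute $(V-\alpha I)^{-1}$ block-wise: it is block lower triangular, with $E\!\to\!E$ blocks $p^{-1}I$, $I\!\to\!I$ blocks $q^{-1}I$, and $E\!\to\!I$ blocks $e\,p^{-1}q^{-1}I$, where $e=d_E^{-1}$, $p=d_E^{-1}-\alpha$, $q=d_I^{-1}-\alpha$. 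Multiplying out $F(V-\alpha I)^{-1}$ and noting that any eigenvector for a nonzero eigenvalue must vanish on the $E_v,I,I_x,I_v$ blocks, the nonzero spectrum reduces to that of the $2n\times 2n$ matrix $\tfrac{e}{pq}\,UW$, where $U$ is obtained by stacking ${\rm diag}(S(t_0))$ on top of ${\rm diag}(S_x(t_0))$ and $W=[\,A\ \ A\,]$. Since $UW$ and $WU={\rm diag}(S(t_0))A+{\rm diag}(S_x(t_0))A=A\,{\rm diag}(S(t_0)+S_x(t_0))$ share their nonzero eigenvalues (as do $A\,{\rm diag}(\cdot)$ and ${\rm diag}(\cdot)A$), the constraint---after the vaccination update---becomes: $b_1\,{\rm diag}\!\big(S_x(t_0)+(1-\psi v)S(t_0)\big)A$ is discrete-time stable, with the scalar $b_1=\dfrac{d_E^{-1}}{(d_E^{-1}-\alpha)(d_I^{-1}-\alpha)}$ (which also matches the specialization $\beta^{\text s}=1,\beta^{\text a}=0,\epsilon=d_E^{-1},r^{\text a}=0,r^{\text s}+\kappa=d_I^{-1}$ of the COVID-19 $b_1$ in Section~\ref{sec: reduction1}).

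\textbf{Finishing the two parts.} For part~1, $A=D_uCD_{N-\Omega}\ge0$, so the matrix above is nonnegative; by the Perron--Frobenius characterization (part~2 of Lemma~7 of \cite{ma2020optimal}, as already used to derive \eqref{eq: vaccine problem general 1}) it is discrete-time stable iff there is $d>0$ with $b_1\,{\rm diag}(S_x(t_0)+(1-\psi v)S(t_0))A\,d\le d$, which we may normalize by $\sum_i d_i=1$; together with $0\le v_i\le 1$ and the objective $\sum_i (S_i(t_0)+I_i(t_0)+R_i(t_0))v_i$ (the total number of doses, by the definition of $v_i$) this is exactly \eqref{eq: bubar problem 2}. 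For part~2, factor $A=D_u\bar A$ with $\bar A=CD_{N-\Omega}$ and absorb $D_u$ into the left diagonal factor, so the matrix becomes $\tilde D\bar A$ with $\tilde D={\rm diag}\!\big(b_1u_i(S_{x,i}(t_0)+(1-\psi v_i)S_i(t_0))\big)$; by part~3 of Lemma~7 of \cite{ma2020optimal} this is discrete-time stable iff $\bar A-\tilde D^{-1}$ is continuous-time stable, and---using that $\bar A$ (hence $\bar A^{-1}$ and $\tilde D^{-1}$) is positive definite and that $X\succcurlyeq Y\Leftrightarrow Y^{-1}\succcurlyeq X^{-1}$ for positive-definite matrices---this is the LMI $\tilde D\preccurlyeq\bar A^{-1}$, i.e.\ precisely the constraint in \eqref{eq: bubar problem 3}.

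\textbf{Main obstacle.} The delicate step is the eigenvalue collapse in the second paragraph: one must justify carefully both the M-matrix/next-generation equivalence $\lambda_{\rm max}(F-(V-\alpha I))\le 0\Leftrightarrow \rho(F(V-\alpha I)^{-1})\le 1$ (which rests on $V-\alpha I$ being a nonsingular M-matrix), and the reduction of the $6n\times 6n$ object---with its three infectious streams and $E/I$ pairing---down to the single $n\times n$ matrix carrying the stated $b_1$. A secondary subtlety sits in part~2: $\bar A=CD_{N-\Omega}$ is not symmetric, so ``continuous-time stable $\Leftrightarrow$ negative semidefinite'' and the inversion should really be performed on the similar symmetric matrix $D_{N-\Omega}^{1/2}CD_{N-\Omega}^{1/2}$ (which is positive definite exactly when $C$ is) and the resulting LMI transported back---the same device used for $\Gamma$ in the main text.
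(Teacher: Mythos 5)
Your proposal is correct and follows essentially the same route as the paper's proof: the same split of the (shifted) boxed matrix into a nonnegative new-infection part and a transition M-matrix, the same use of Lemma 7 of \cite{ma2020optimal} to pass to discrete-time stability of a next-generation-type matrix, the same collapse via invariance of nonzero spectra under reordering of products down to $b_1\,{\rm diag}\bigl(S_i(t_0)+S_{x,i}(t_0)-\psi v_iS_i(t_0)\bigr)A$, and the same finishes for the two parts (Perron vector plus normalization for part 1, Lemma 7 part 3 plus positive-definite inversion for part 2). The only deviations are minor and in your favor: your $UW$/$WU$ collapse avoids the paper's factorization through ${\rm diag}\bigl(\tfrac{S_{x,i}(t_0)+(1-\psi)v_iS_i(t_0)}{S_i(t_0)-v_iS_i(t_0)}\bigr)$ (which silently needs $v_i<1$), your constant $b_1=\tfrac{d_E^{-1}}{(d_E^{-1}-\alpha)(d_I^{-1}-\alpha)}$ is the value the block computation actually produces (the paper's displayed $b_1$ drops the $d_E^{-1}$ factor, inconsistently with its own $LD^{-1}$ structure and with the COVID-19 $b_1$ specialization), and your symmetrization remark in part 2, together with the explicit justification that $\alpha<\min(d_E^{-1},d_I^{-1})$ on any feasible instance, tightens two points the paper passes over.
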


\begin{proof}[Proof of proposition \ref{pro: reduction problem of bubar}]
First, we will prove part (1) of Proposition \ref{pro: reduction problem of bubar}.  To make the first constraint in \eqref{eq: bubar problem 1} hold, we need that 
\begin{equation}\label{eq: bubar consraint 1}
N(t_0) - \alpha I \text{ is continuous time stable.} 
\end{equation}
Let us write \[N(t_0) - \alpha I = L - D,\] where
\begin{equation*}
	\scalemath{0.9}{
	L = \left(\begin{matrix} 
	0 & 0 & 0 & {\rm diag}(S_i(t_0) - v_i S_i(t_0))A & {\rm diag}(S_i(t_0) - v_i S_i(t_0))A & {\rm diag}(S_i(t_0) - v_i S_i(t_0))A \\
		0 & 0 & 0 & {\rm diag}(S_{x,i}(t_0) + (1-\psi)v_iS_i(t_0))A & {\rm diag}(S_{x,i}(t_0) + (1-\psi)v_iS_i(t_0))A & {\rm diag}(S_{x,i}(t_0) + (1-\psi)v_iS_i(t_0))A \\
		0 & 0 & 0 & 0 & 0 & 0 \\
		0 & 0 & 0 & 0 & 0 & 0\\
		0 & 0 & 0 & 0 & 0 & 0\\
		0 & 0 & 0 & 0 & 0 & 0
	\end{matrix} \right), }
\end{equation*}
and
\begin{equation*}
	\scalemath{0.9}{
	D = \left(\begin{matrix} 
		d_E^{-1} -\alpha & 0 & 0 & 0 & 0 & 0 \\
		0 & d_E^{-1}-\alpha & 0 & 0 & 0 & 0 \\
		0 & 0 & d_E^{-1}-\alpha & 0 & 0 & 0 \\
		-d_E^{-1} & 0 & 0 & d_I^{-1}-\alpha & 0 & 0\\
		0 & - d_E^{-1} & 0 & 0 & d_I^{-1}-\alpha &  0\\
		0 & 0 & - d_E^{-1} & 0 & 0 &  d_I^{-1}-\alpha 
	\end{matrix} \right). }
\end{equation*}
Since $L$ is nonnegative, the off-diagonal elements of $D$ is non-positive and its inverse is elementwise nonnegative, 
then according to Lemma 7, part (3) in \cite{ma2020optimal}, we can derive that \eqref{eq: bubar consraint 1} is equivalent to
\begin{equation}\label{eq: bubar constraint 2}
D^{-1}L \text{ is discrete time stable.} 
\end{equation}
As the nonzero eigenvalues of the product of two matrices keeps the same when the order of the product changes, we can also write \eqref{eq: bubar constraint 2} as
\[LD^{-1} \text{ is discrete time stable.} \]
Observing that
\[LD^{-1} = \left(
\begin{matrix}
	\frac{1}{(d_E^{-1} - \alpha)(d_I^{-1} - \alpha)}A_1 & \frac{1}{d_I^{-1} - \alpha}A_1 \\
	0 & 0
\end{matrix}
\right),\]
where
\[
	\scalemath{0.95}{A_1 = \left( \begin{matrix} {\rm diag}(S_i(t_0) - v_i S_i(t_0))A & {\rm diag}(S_i(t_0) - v_i S_i(t_0))A & {\rm diag}(S_i(t_0) - v_i S_i(t_0))A \\
	{\rm diag}(S_{x,i}(t_0) + (1-\psi)v_iS_i(t_0))A & {\rm diag}(S_{x,i}(t_0) + (1-\psi)v_iS_i(t_0))A & {\rm diag}(S_{x,i}(t_0) + (1-\psi)v_iS_i(t_0))A \\
	0 & 0 & 0
\end{matrix}\right).}\]
We can further reduce the first constraint in \eqref{eq: bubar problem 1} as
\begin{equation}\label{eq: bubar constraint 3}
	b_1A_1 \text{ is discrete time stable,}
\end{equation}
where $b_1 = 	\frac{1}{(d_E^{-1} - \alpha)(d_I^{-1} - \alpha)}.$ Obviously, matrix $A_1$ has the same nonzero eigenvalues as the submatrix
\[A_2 = \left( \begin{matrix} {\rm diag}(S_i(t_0) - v_i S_i(t_0))A & {\rm diag}(S_i(t_0) - v_i S_i(t_0))A \\
	{\rm diag}(S_{x,i}(t_0) + (1-\psi)v_iS_i(t_0))A & {\rm diag}(S_{x,i}(t_0) + (1-\psi)v_iS_i(t_0))A 
\end{matrix}\right). \]
Moreover, we can write 
\[A_2 = \left( \begin{matrix} 1 & 0 \\
	{\rm diag}\left(\frac{S_{x,i}(t_0) + (1-\psi)v_iS_i(t_0)}{S_i(t_0) - v_i S_i(t_0)}\right) & 1
\end{matrix}\right)
 \left( \begin{matrix} {\rm diag}(S_i(t_0) - v_i S_i(t_0))A & {\rm diag}(S_i(t_0) - v_i S_i(t_0))A \\
 	0 & 0
 \end{matrix}\right),\]
which has the same nonzero eigenvalues with matrix
\begin{align*}
&\left( \begin{matrix} {\rm diag}(S_i(t_0) - v_i S_i(t_0))A & {\rm diag}(S_i(t_0) - v_i S_i(t_0))A \\
	0 & 0
\end{matrix}\right)\left( \begin{matrix} 1 & 0 \\
	{\rm diag}\left(\frac{S_{x,i}(t_0) + (1-\psi)v_iS_i(t_0)}{S_i(t_0) - v_i S_i(t_0)}\right) & 1
\end{matrix}\right)\\
&= \left( \begin{matrix}{\rm diag}(S_i(t_0) + S_{x, i}(t_0) - \psi  v_i S_i(t_0))A & {\rm diag}(S_i(t_0) - v_i S_i(t_0))A \\
	0 & 0
\end{matrix}\right).	
\end{align*}
Thus, we can write the constraint \eqref{eq: bubar constraint 3} as
\begin{equation}\label{eq: bubar constrain 4}
b_1 {\rm diag}(S_i(t_0) + S_{x, i}(t_0) - \psi  v_i S_i(t_0))A \text{ is discrete time stable}.
\end{equation}
By applying Lemma 7, part (2) of \cite{ma2020optimal}, we can write the constraint above as there exists $d>0$, such that
\[ b_1 {\rm diag}(S_i(t_0) + S_{x, i}(t_0) - \psi  v_i S_i(t_0))A d \leq d.\]
To eliminate the scalar multiple of $d$, we add a normalization constraint $\sum_i d_i = 1$, and then we can get the reformulation \eqref{eq: bubar problem 2}. Thus, we complete the proof of part (1).

Next, we will prove part (2) of Proposition \ref{pro: reduction problem of bubar}. We  start our reduction from constraint \eqref{eq: bubar constrain 4}. As matrix 
\[A = D_uCD_{N - \Omega} = D_u \bar{A},\]
by applying Lemma 7, part (3) in \cite{ma2020optimal}, we can write the constraint \eqref{eq: bubar constrain 4} as
\begin{equation}\label{eq: bubar constraint 5}
\bar{A} - b_1 {\rm diag}\left(\frac{1}{b_1u_i(S_i(t_0) + S_{x, i}(t_0) - \psi  v_i S_i(t_0)))\bar{A}} \right) \text{ is continuous time stable}.
\end{equation}

When the contact matrix $C$ satisfies that the number of contact measured from $i$ to $j$ is equal to the number measured from $j$ to $i$, the matrix $\bar{A} = CD_{N- \Omega}$ is symmetric. Besides, the contact matrix obtained by gathering empirical social contacts usually shows a pattern \cite{mossong2008social, mistry2021inferring, Brittoneabc6810, prem2017projecting} that the diagonal elements are greater than off-diagonal elements, which ensures the positivity of the eigenvalues of matrix $\bar{A}$.
In other words, the positive definiteness of matrix $\bar{A}$ can be easily obtained.

If the matrix $\bar{A}$ is positive-definite, we can write the constraint \eqref{eq: bubar constraint 5} as
\[{\rm diag}(b_1u_i(S_i(t_0) + S_{x, i}(t_0) - \psi  v_i S_i(t_0))) - \bar{A}^{-1} \text{ is continuous time stable},\]
since $A \succcurlyeq B$ is equivalent to $B^{-1} \succcurlyeq A^{-1}$ if both matrices $A$ and $B$ are positive definite. In this case, we complete the proof of part (2).

\end{proof}

Since the contact matrices adopted in \cite{bubar2021model} can not satisfy the condition that the number of contacts measured from $i$ to $j$ is equal to the number measured from $j$ to $i$,  the matrix $\bar{A}$ is not positive definite. Therefore, we generate the optimal stabilizing vaccine allocation policy by solving the  problem \eqref{eq: bubar problem 2}. 

\clearpage







\clearpage

\section{Supplementary Figures}

\begin{figure}[!htb]
	\includegraphics[width=1.0\linewidth]{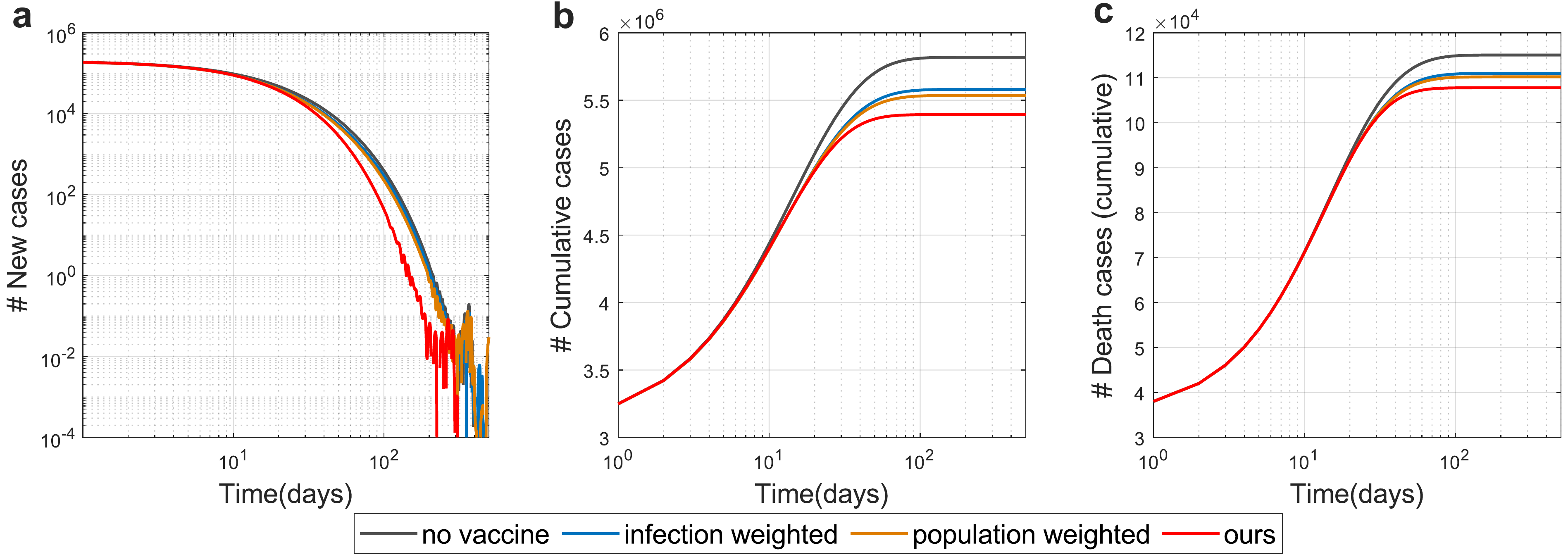}
	\centering
	\caption{
		{\bf When the vaccine supply is limited, and if we ignore the demographic structure of the population, our method outperforms all the other comparison methods.}  {\bf a}, the estimated number of daily new cases.  {\bf b}, the estimated number of cumulative cases. {\bf c}, the estimated number of cumulative death cases. The vaccines are supplied daily with
		a speed of $0.33\%$ of total population per day. The total number of the available vaccine doses for these policies are the same, that is $5\%$ of the total population. ``Population weighted", ``infection weighted", and ``no vaccine" polices are defined as in Fig. \ref{fig: demo_allocation_small}. The data applied is about COVID-19 outbreak in NY on December 1st, 2020.
	}\label{fig: location_curves_small}
\end{figure}
\clearpage
\begin{figure}[!htb]
	\includegraphics[width=1.0\linewidth]{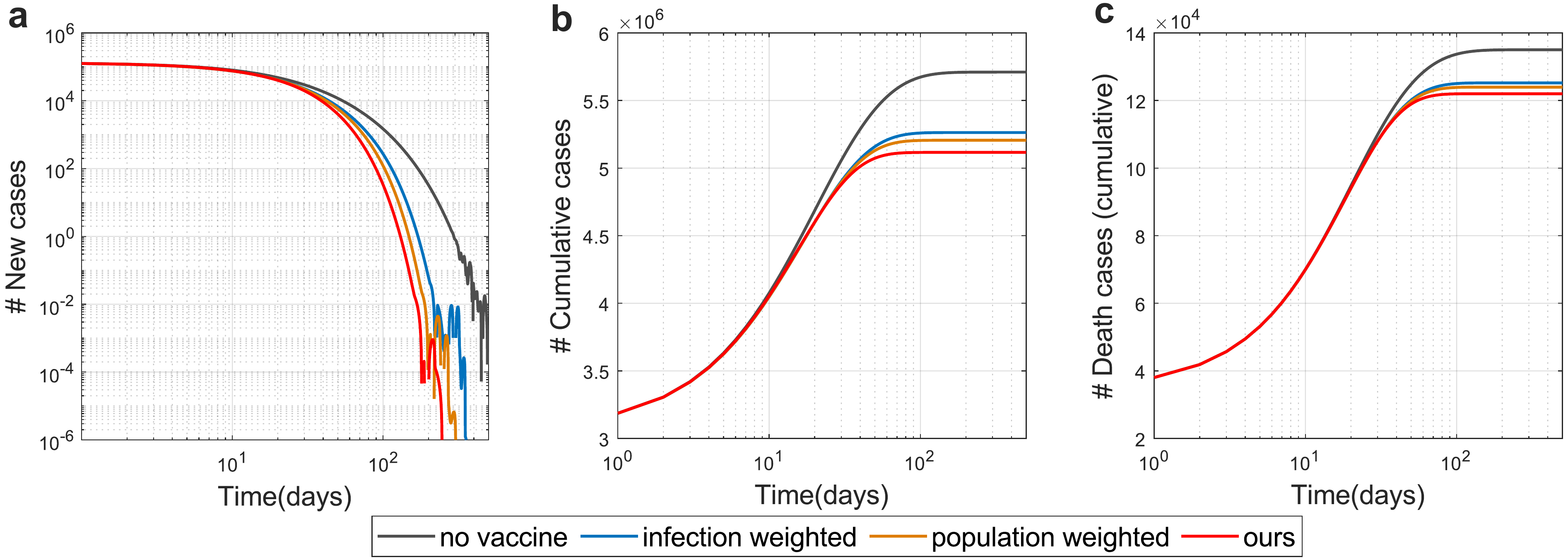}
	\centering
	\caption{
{\bf When the vaccine supply is unlimited, and if we ignore the demographic structure of the population, our method outperforms all the other comparison methods.}  {\bf a}, the estimated number of daily new cases.  {\bf b}, the estimated number of cumulative cases. {\bf c}, the estimated number of cumulative death cases. The vaccines are supplied daily with
a speed of $0.33\%$ of total population per day. The total number of the available vaccine doses for these policies are the same, that is $100\%$ of the total population. ``Population weighted", ``infection weighted", and ``no vaccine" polices are defined as in Fig. \ref{fig: demo_allocation_small}. The data applied is about COVID-19 outbreak in NY on December 1st, 2020.
	}\label{fig: location_curves_large}
\end{figure}
\begin{figure}[!htb]
	\centering
	\begin{minipage}[b]{0.95\linewidth}
		\centering
		\includegraphics[width=1.0\linewidth]{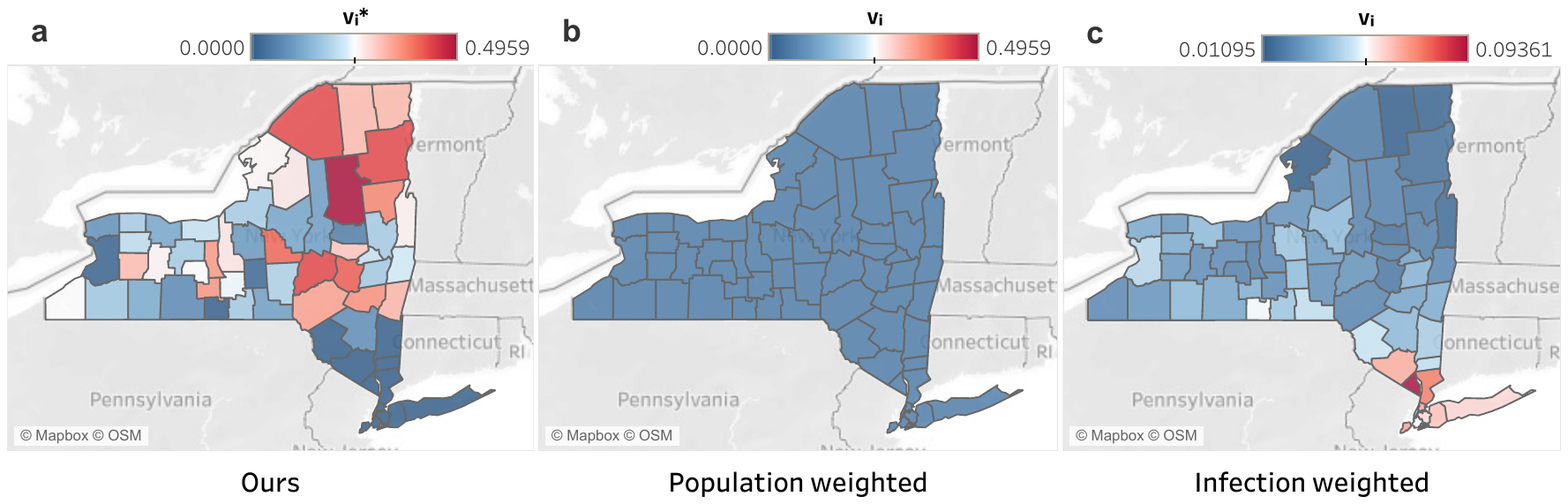}		\end{minipage}\hfill
	
	\vspace{3mm}
	
	\begin{minipage}[b]{0.95\linewidth}
		\centering
		\includegraphics[width=1.0\linewidth]{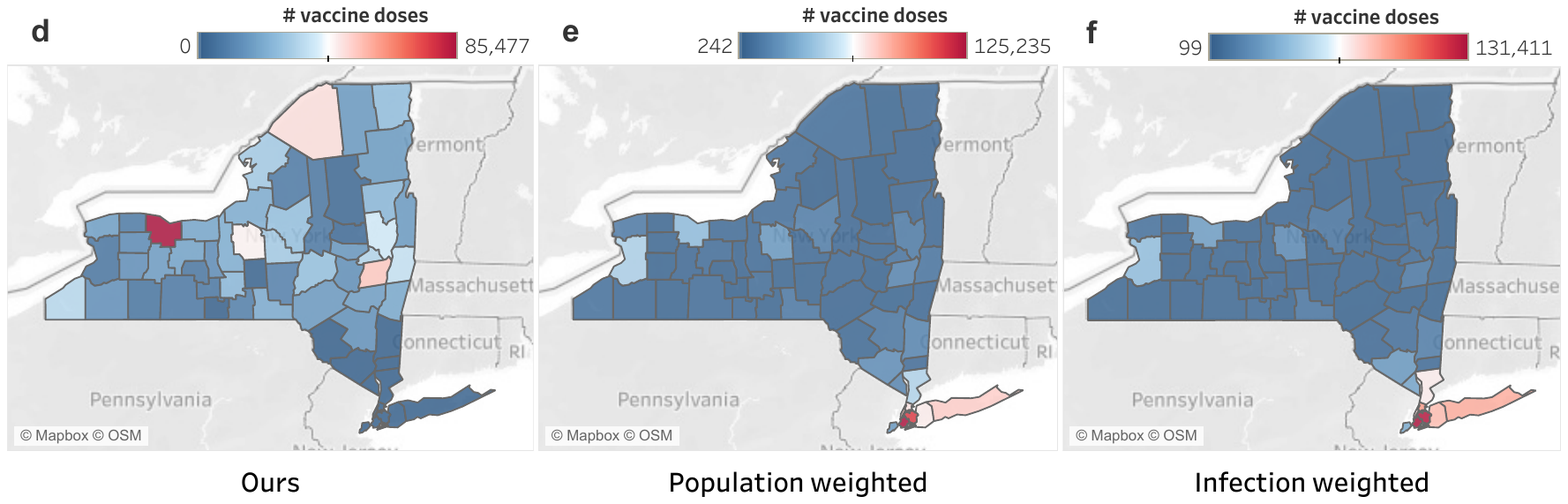}
	\end{minipage}%
	\centering
	\caption{{\bf When vaccine supply is limited, and if we ignore the demographic structure of the population, our method suggests to allocate zero vaccines to counties in NYC and Long Island.} {\bf a-c}, vaccine allocation rate $v_i$ of each county given by different policies. {\bf d-f}, the number of vaccine doses allocated to each county by different policies. The vaccines are supplied daily with
	a speed of $0.33\%$ of total population per day. The total number of the available vaccine doses for these policies are the same, that is $5\%$ of the total population. ``Population weighted", ``infection weighted", and ``no vaccine" polices are defined as in Fig. \ref{fig: demo_allocation_small}. The data applied is about COVID-19 outbreak in NY on December 1st, 2020. The values of $v_i^*$, $v_i$ in this figure corresponds to the results in Fig. S\ref{fig: location_curves_small}.
	}\label{fig: location_daily_small}
\end{figure}
\begin{figure}[!htb]
	\centering
	\begin{minipage}[b]{0.95\linewidth}
		\centering
		\includegraphics[width=1.0\linewidth]{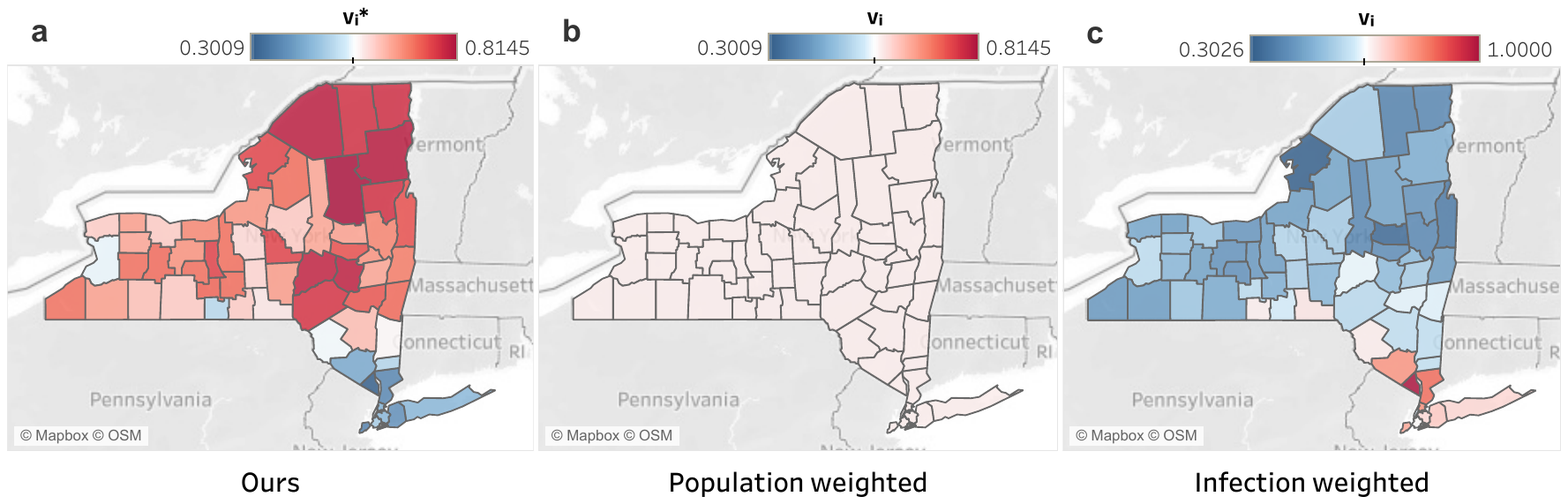}			
	\end{minipage}\hfill
	
	\vspace{3mm}
	
	\begin{minipage}[b]{0.95\linewidth}
		\centering
		\includegraphics[width=1.0\linewidth]{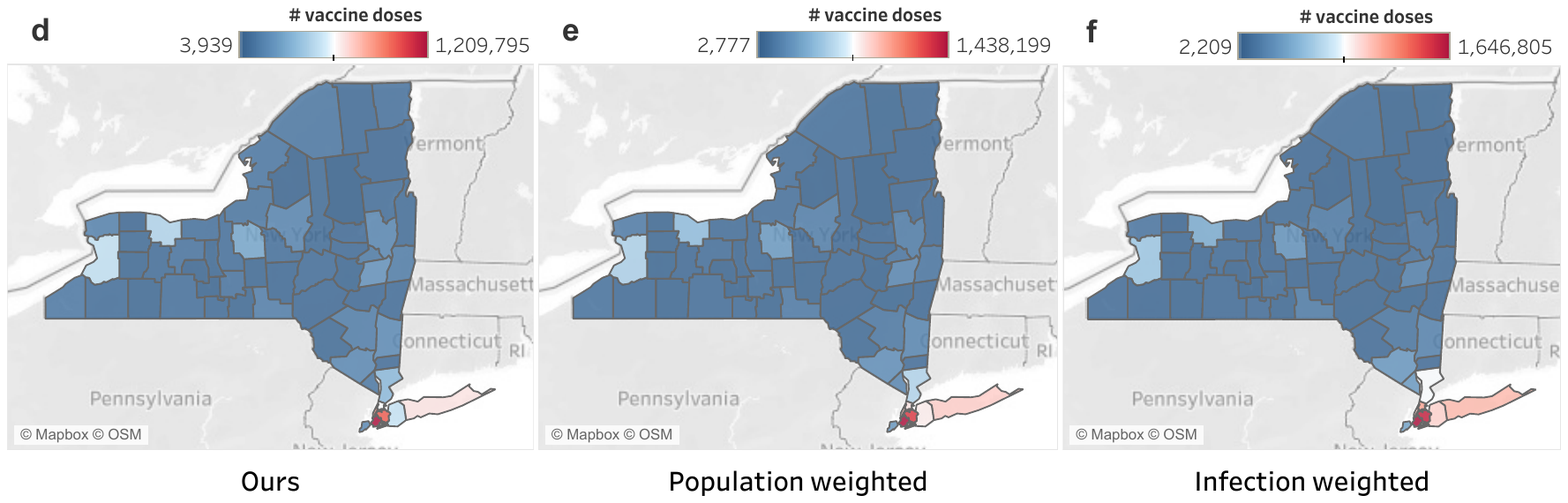}
	\end{minipage}%
	\centering
	\caption{{\bf When the vaccine supply is unlimited, and if we ignore the demographic structure of the population, our method suggests to allocate vaccines firstly to counties outside of the NYC and Long Island.} {\bf a-c}, vaccine allocation rate $v_i$ of each county given by different policies. {\bf d-f}, the number of vaccine doses allocated to each county by different policies. The vaccines are supplied daily with
	a speed of $0.33\%$ of total population per day. The total number of the available vaccine doses for these policies are the same, that is $100\%$ of the total population. In this figure, we only show the results before the disappearing of the epidemics (\# active cases is less than 1). After that, we will not distinguish between these policies, the leftover vaccines will be evenly distributed to the counties in each scenario. ``Population weighted", ``infection weighted", and ``no vaccine" polices are defined as in Fig. \ref{fig: demo_allocation_small}. The data applied is about COVID-19 outbreak in NY on December 1st, 2020. The values of $v_i^*$, $v_i$ in this figure corresponds to the results in Fig. S\ref{fig: location_curves_large}.
	}\label{fig: location_daily_large}
\end{figure}
\clearpage
\begin{figure}[!htb]
	\centering
	\begin{minipage}[b]{1.0\linewidth}
		\centering
		\includegraphics[width=0.8\linewidth]{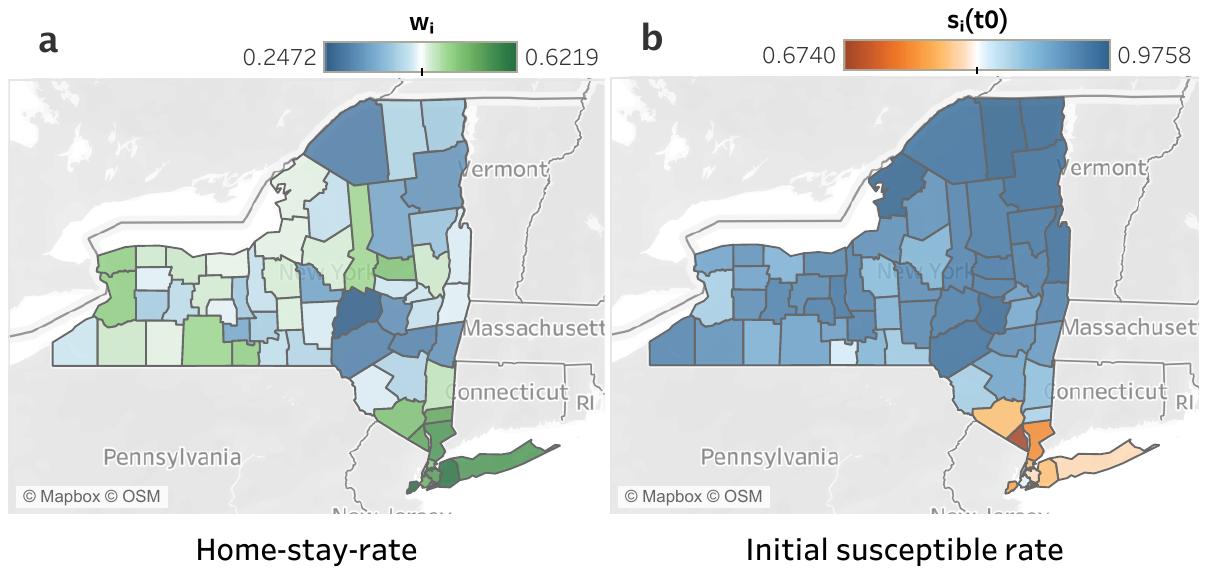}			
	\end{minipage}\hfill
	\caption{{\bf Counties in NYC and Long Island have relatively larger values of home-stay-rate and smaller values of initial susceptible rate.} {\bf a}, visualization of home-stay-rate $w_i$ of each county in NY, where $w_i = \frac{W_i}{1440}$, $W_i$ is the median of the home-dwell-time (in minutes) in a day for county $i$. {\bf b}, visualization of the initial susceptible rate $s_i(t_0)$ for each county in NY. The home-stay-rate is used to generate the travel rate matrix $\tau$, the details can be found in SI Sec. \ref{sec: data covid_19}. Both the home-stay-rate and initial susceptible rate are playing an import role to decide how to allocate the vaccines to each county in our method. From Fig. \ref{fig: demo_daily_small}, Fig. S\ref{fig: location_daily_small}, Fig. S\ref{fig: location_daily_large}, and Fig. S\ref{fig: demo_daily_large}, it can be observed that our method tends to allocate zero vaccines to county with large value of home-stay-rate and small value of initial susceptible rate.}\label{fig: home_rate}
\end{figure}
\clearpage
\begin{figure}[!htb]
	\includegraphics[width=0.70\linewidth]{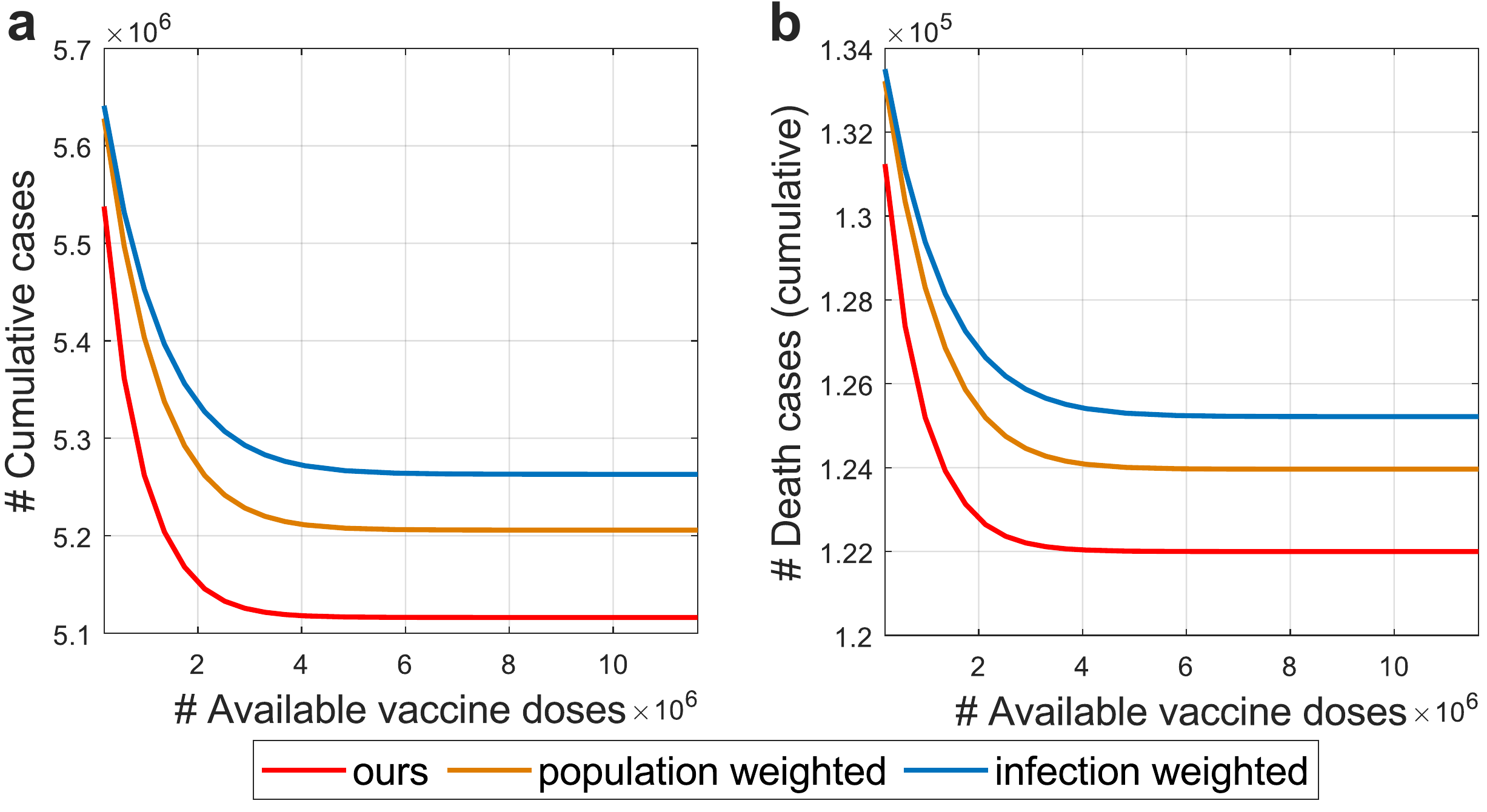}
	\centering
	\caption{ {\bf If we do not consider the demographic structure of the population, our method outperforms the other comparison methods regardless of the number of the available vaccine doses}. {\bf a}, the estimated number of final cumulative cases.  {\bf b}, the estimated number of final cumulative death cases. The vaccines are supplied daily with a speed of $0.33\%$ of total population per day. The total number of the available vaccine doses ranges from 1\% to 50\% of the population. We only observe to $50\%$ as the epidemic disappears after supplying around $50\%$ people with vaccines. ``Population weighted" and ``infection weighted" polices are defined as in Fig. \ref{fig: demo_allocation_small}. The data applied is about COVID-19 outbreak in NY on December 1st, 2020. It can be observed that the superiority of our method over the other two methods is more obvious when there are enough number of the available vaccine doses.
	}\label{fig: location_V_num_varying}
\end{figure}
%
\clearpage
\begin{figure}[!htb]
	\includegraphics[width=0.7\linewidth]{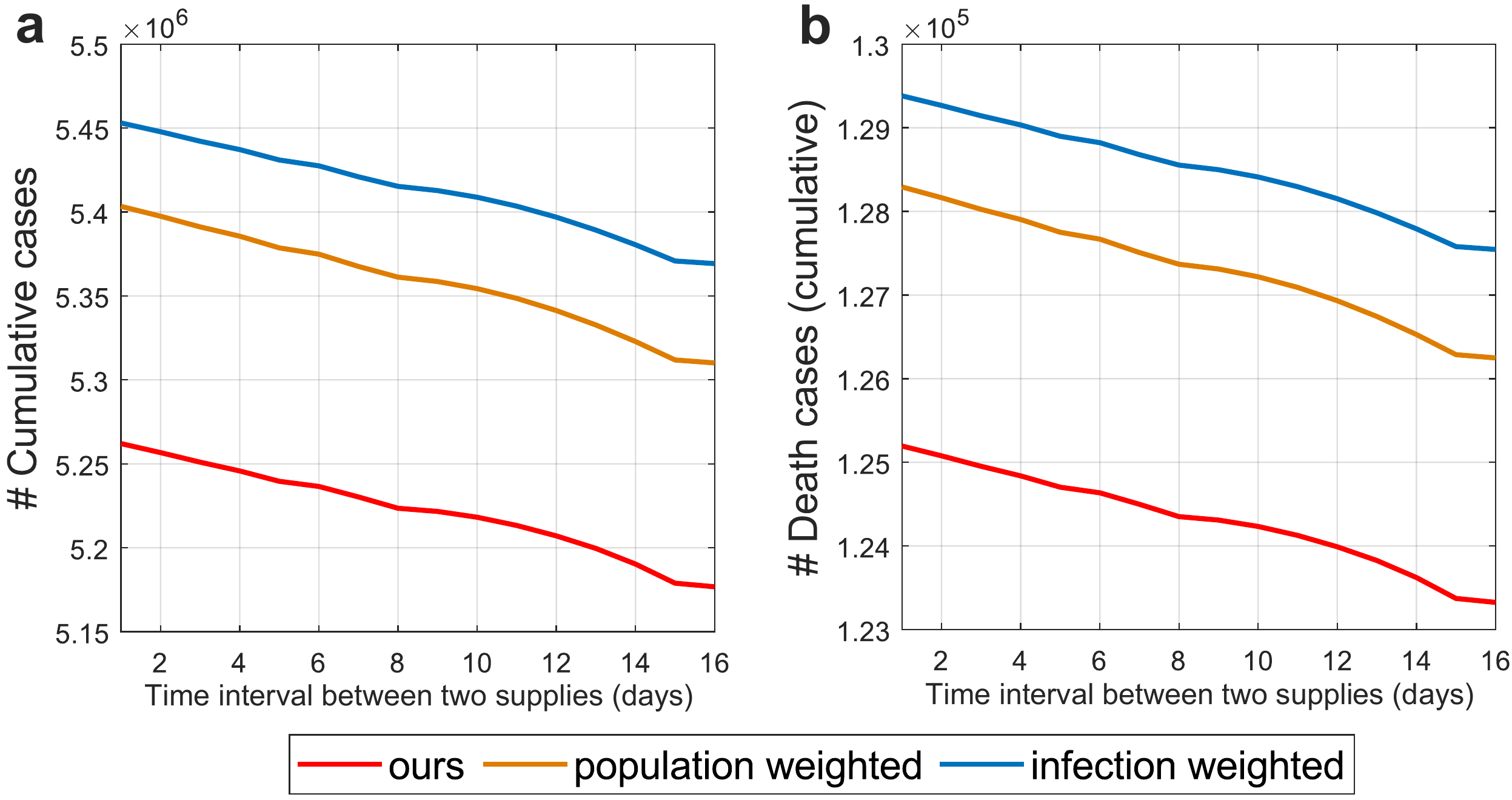}
	\centering
	\caption{ {\bf If we do not consider the demographic structure of the population, our method outperforms the other comparison methods regardless of the time interval between two vaccine supplies}. {\bf a}, the estimated number of final cumulative cases.  {\bf b}, the estimated number of final cumulative death cases. The vaccines are supplied in the beginning of each period, the time interval between two vaccine supplies ranges from 1 day to 16 days. The average number of the vaccine supply per day for these scenarios are the same, that is 0.33\% of the population. The total number of the available vaccine doses for these policies is $5\%$ of the total population.  ``Population weighted" and ``infection weighted" polices are defined as in Fig. \ref{fig: demo_allocation_small}. The data applied is about COVID-19 outbreak in NY on December 1st, 2020. The number of the cumulative cases and the death cases decreases when the time interval increases as more people will be vaccinated in the beginning.
	}\label{fig: location_period_varying}
\end{figure}
\clearpage

\begin{figure}[!htb]
	\includegraphics[width=0.7\linewidth]{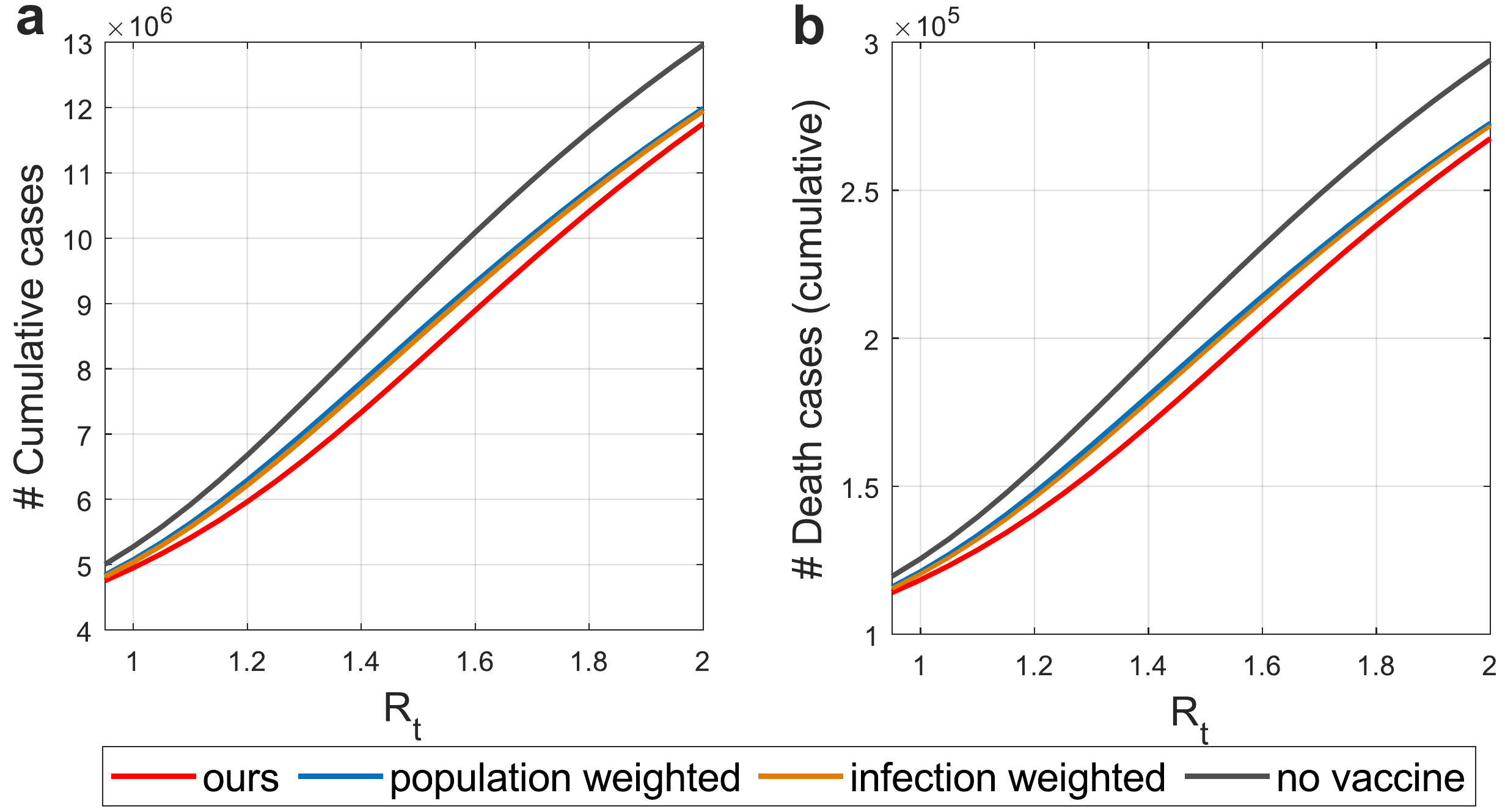}
	\centering
	\caption{{\bf If we do not consider the demographic structure of the population, our method outperforms the other comparison methods regardless of the the value of $R_t$}. {\bf a}, the estimated number of final cumulative cases.  {\bf b}, the estimated number of final cumulative death cases. In {\bf a} and {\bf b}, the value of $R_t$ ranges from 0.95 to 2.0. The vaccines are supplied daily with a speed of $0.33\%$ of total population per day. The total number of the available vaccine doses for these policies is $5\%$ of the total population.  ``Population weighted" and ``infection weighted" polices are defined as in Fig. \ref{fig: demo_allocation_small}. The data applied is about COVID-19 outbreak in NY on December 1st, 2020.
	}\label{fig: location_Rt_varying}
\end{figure}
\clearpage
\begin{figure}[!htb]
	\centering
	\begin{minipage}[b]{0.95\linewidth}
		\centering
		\includegraphics[width=1.0\linewidth]{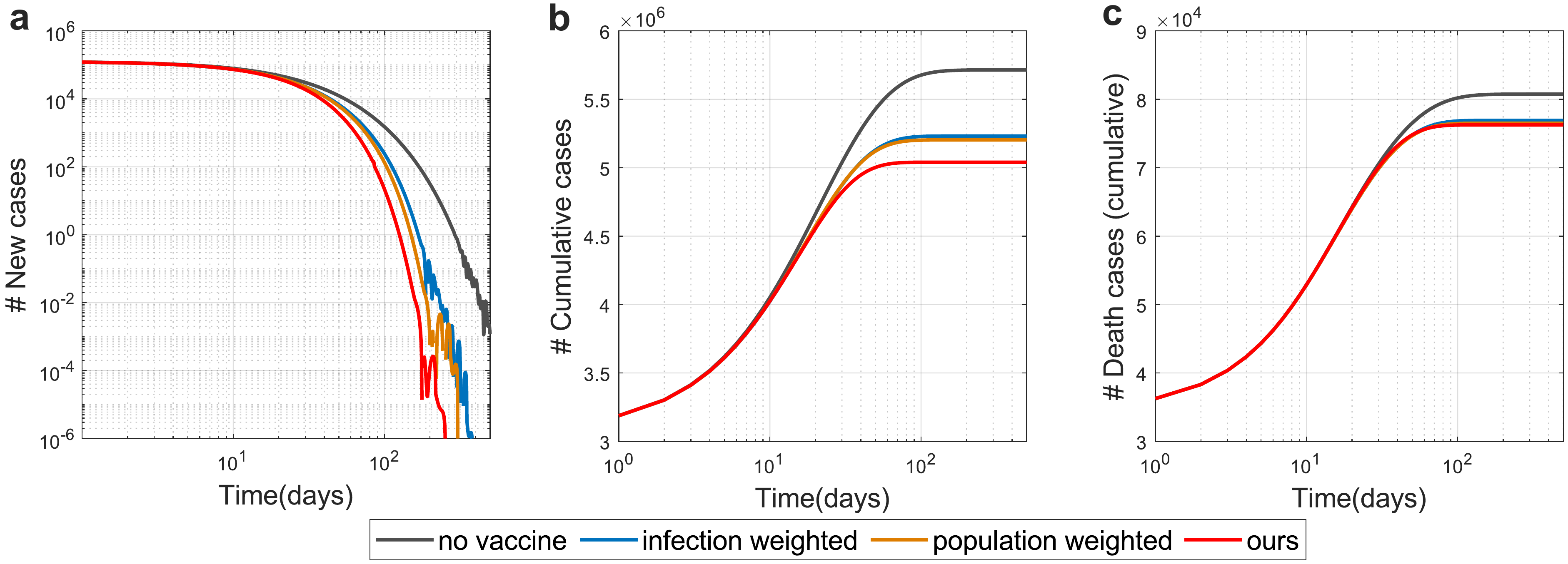}			
	\end{minipage}\hfill
	
	\vspace{3mm}
	
	\begin{minipage}[b]{0.6\linewidth}
		\centering
		\includegraphics[width=1.0\linewidth]{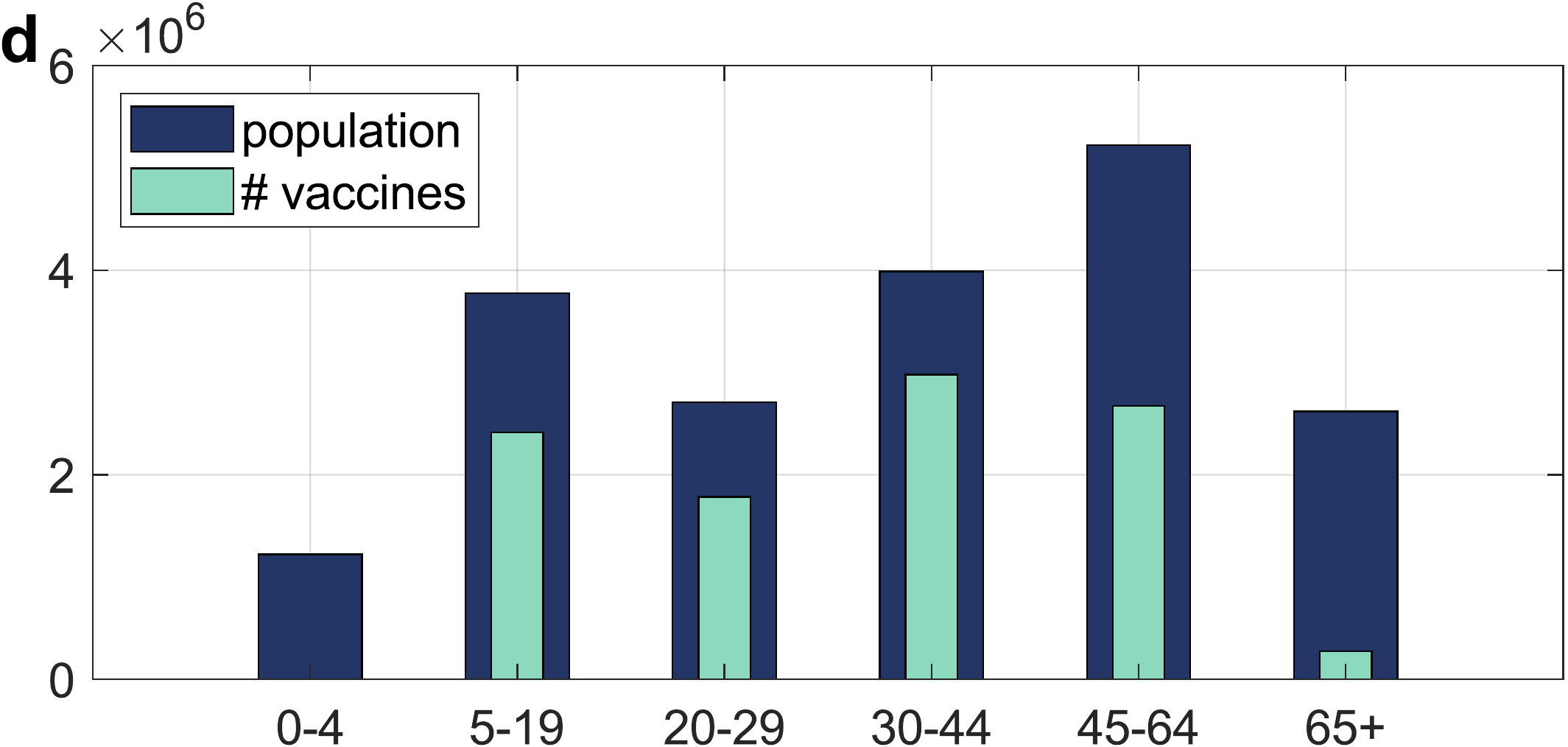}
	\end{minipage}%
	\centering
	\caption{{\bf When the vaccine supply is unlimited, our method suggests to allocate vaccines to adults in 20-64 and the school-age children (5-19) firstly, such strategy outperforms all the other comparison methods.}  {\bf a}, the estimated number of daily new cases.  {\bf b}, the estimated number of cumulative cases. {\bf c}, the estimated number of cumulative death cases. {\bf d}, the optimal stabilizing vaccine allocation number for each age group calculated by our method. The vaccines are supplied daily with
		a speed of $0.33\%$ of total population per day. The total number of the available vaccine doses for these policies are the same, that is $100\%$ of the total population. ``Population weighted", ``infection weighted", and ``no vaccine" polices are defined as in Fig. \ref{fig: demo_allocation_small}. The data applied is about COVID-19 outbreak in NY on December 1st, 2020.
	}\label{fig: demo_curves_large}
\end{figure}
%
%
%
%
%
%
%
\clearpage

\begin{figure}[!htb]
	\centering
	\begin{minipage}[b]{0.5\linewidth}
		\centering
		\includegraphics[width=0.8\linewidth]{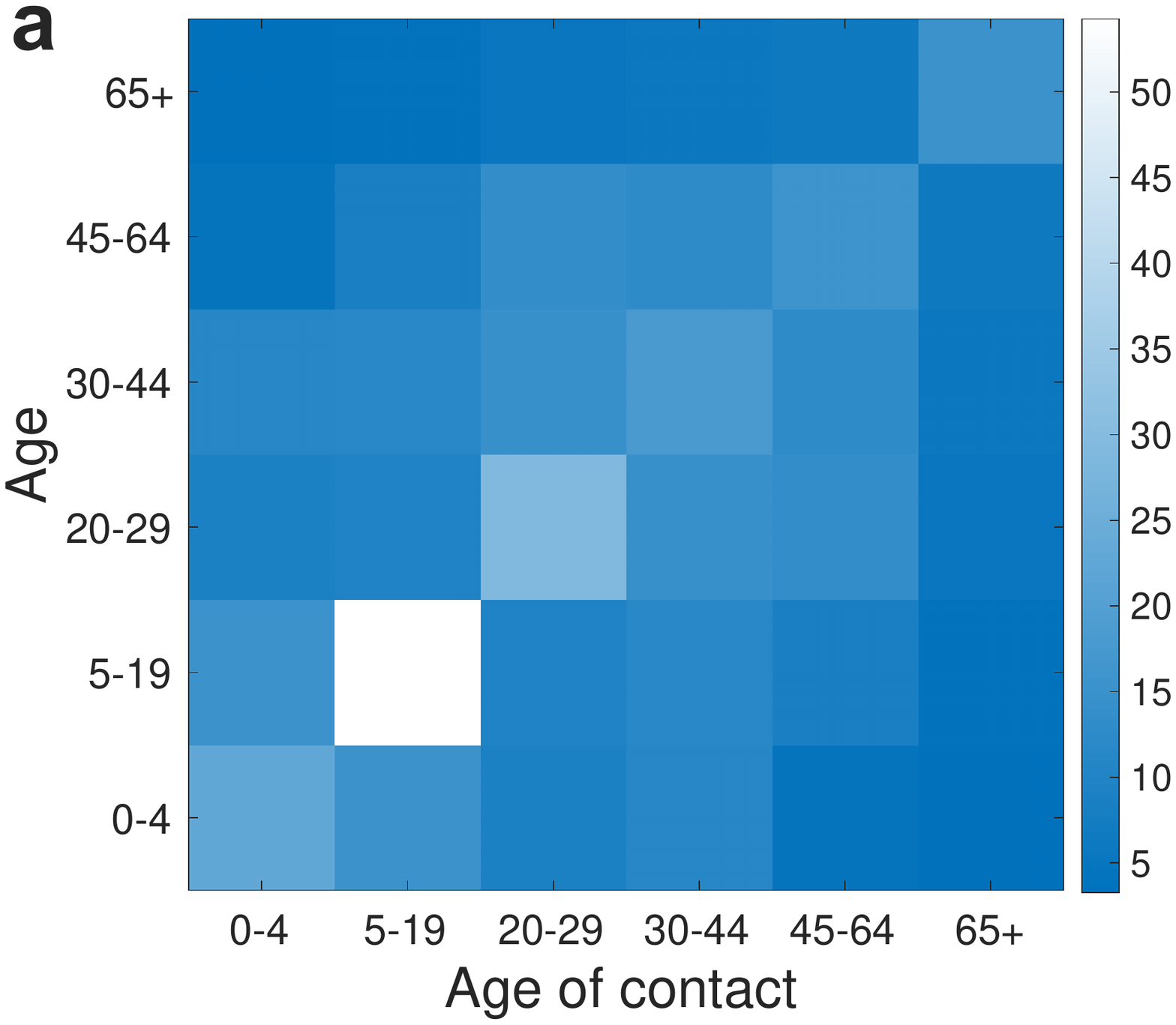}			
	\end{minipage}\hfill
	\begin{minipage}[b]{0.5\linewidth}
		\centering
		\includegraphics[width=0.8\linewidth]{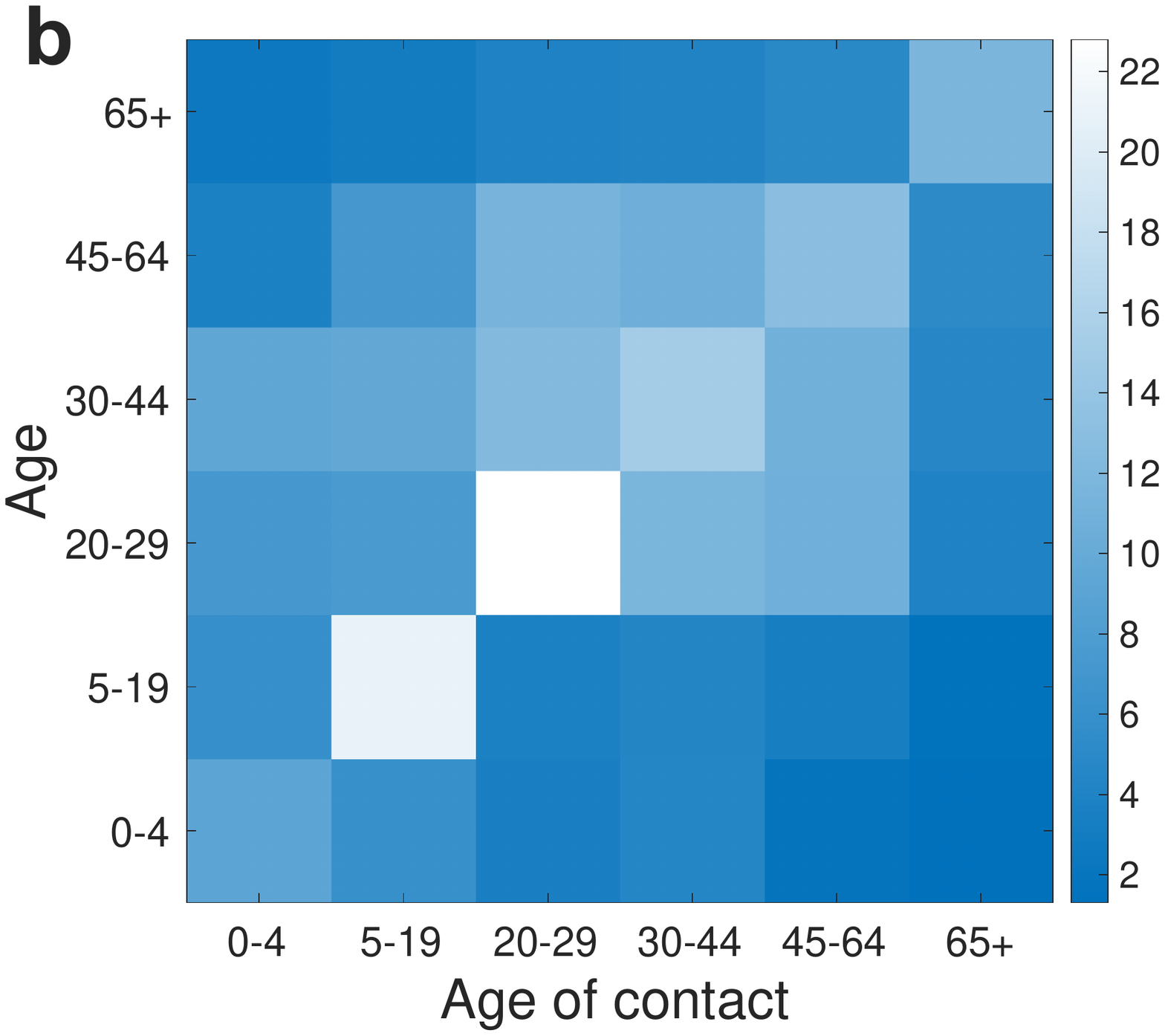}
	\end{minipage}%
	\centering
	\caption{{\bf School-age children (5-19) has the largest value of the contact rate, but young adults (20-29) has the largest value of the transmission rate.} {\bf a}, visualization of the intrinsic connectivity matrix $\Gamma$ for NY we used, which is defined in \eqref{eq: NY_Gamma}. Matrix $\Gamma$ measures the number of contacts between different age groups in each day. {\bf b}, visualization of the matrix  ${\rm diag}(\beta_0)\Gamma$, where $\beta_{0,a}$ represents the probability that an individual of age group $a$ be infected in each contact with others. Therefore $\{{\rm diag}(\beta_0)\Gamma\}_{ab}$ represents the probability that an individual from group $a$ be infected by people from group $b$ in each.}\label{fig: contact matrix}
\end{figure}

\clearpage
\begin{figure}[!htb]
	\centering
	\begin{minipage}[b]{0.95\linewidth}
		\centering
		\includegraphics[width=1.0\linewidth]{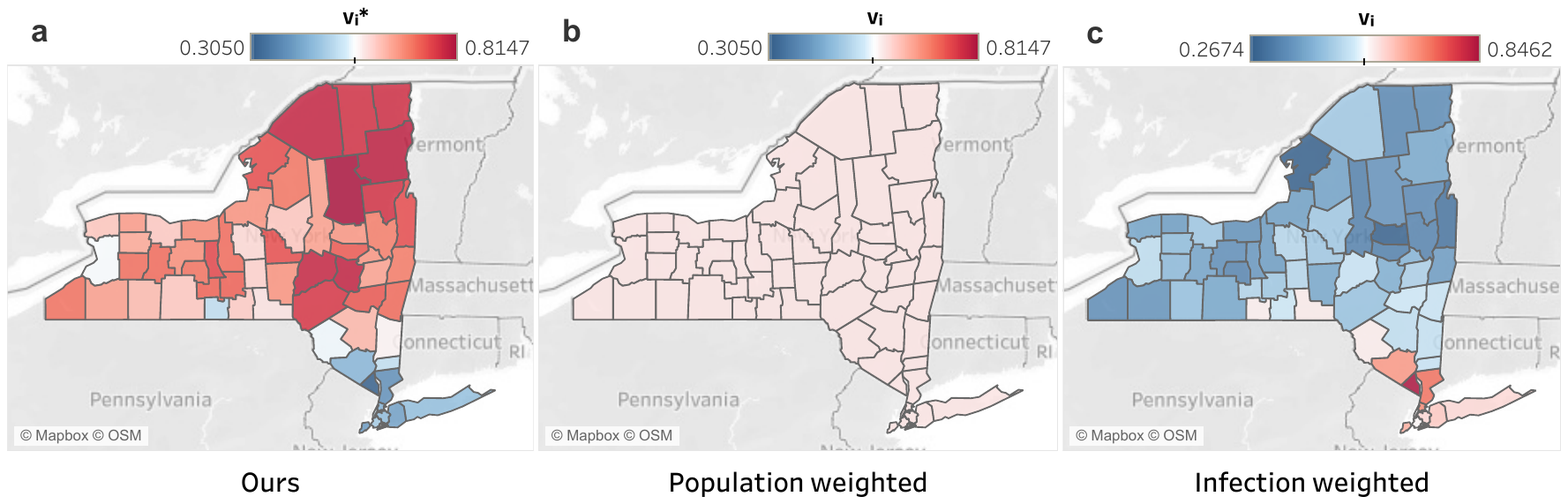}			
	\end{minipage}\hfill
	
	\vspace{3mm}
	
	\begin{minipage}[b]{0.95\linewidth}
		\centering
		\includegraphics[width=1.0\linewidth]{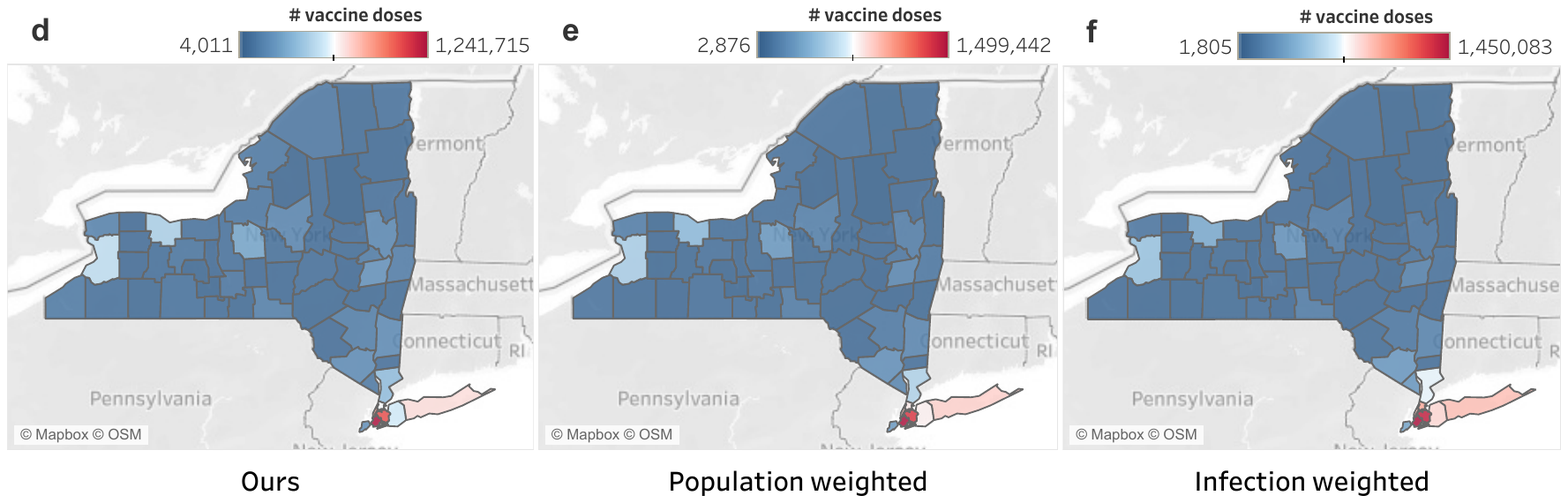}
	\end{minipage}%
	\centering
	\caption{
		{\bf When the vaccine supply is unlimited, our method suggests to allocate vaccines firstly to counties outside of the NYC and Long Island.} {\bf a-c}, vaccine allocation rate $v_i$ of each county given by different policies. {\bf d-f}, the number of vaccine doses allocated to each county by different policies. The vaccines are supplied daily with
		a speed of $0.33\%$ of total population per day. The total number of the available vaccine doses for these policies are the same, that is $100\%$ of the total population. In this figure, we only show the results before the disappearing of the epidemics (\# active cases is less than 1). After that, we will not distinguish between these policies, the leftover vaccines will be evenly distributed to the counties in each scenario. ``Population weighted", ``infection weighted", and ``no vaccine" polices are defined as in Fig. \ref{fig: demo_allocation_small}. The data applied is about COVID-19 outbreak in NY on December 1st, 2020. The values of $v_i^*$, $v_i$ in this figure corresponds to the results in Fig. S\ref{fig: demo_curves_large}.
	}\label{fig: demo_daily_large}
\end{figure}
%
%
%
%
%
%
\begin{figure}[!htb]
	\includegraphics[width=0.66\linewidth]{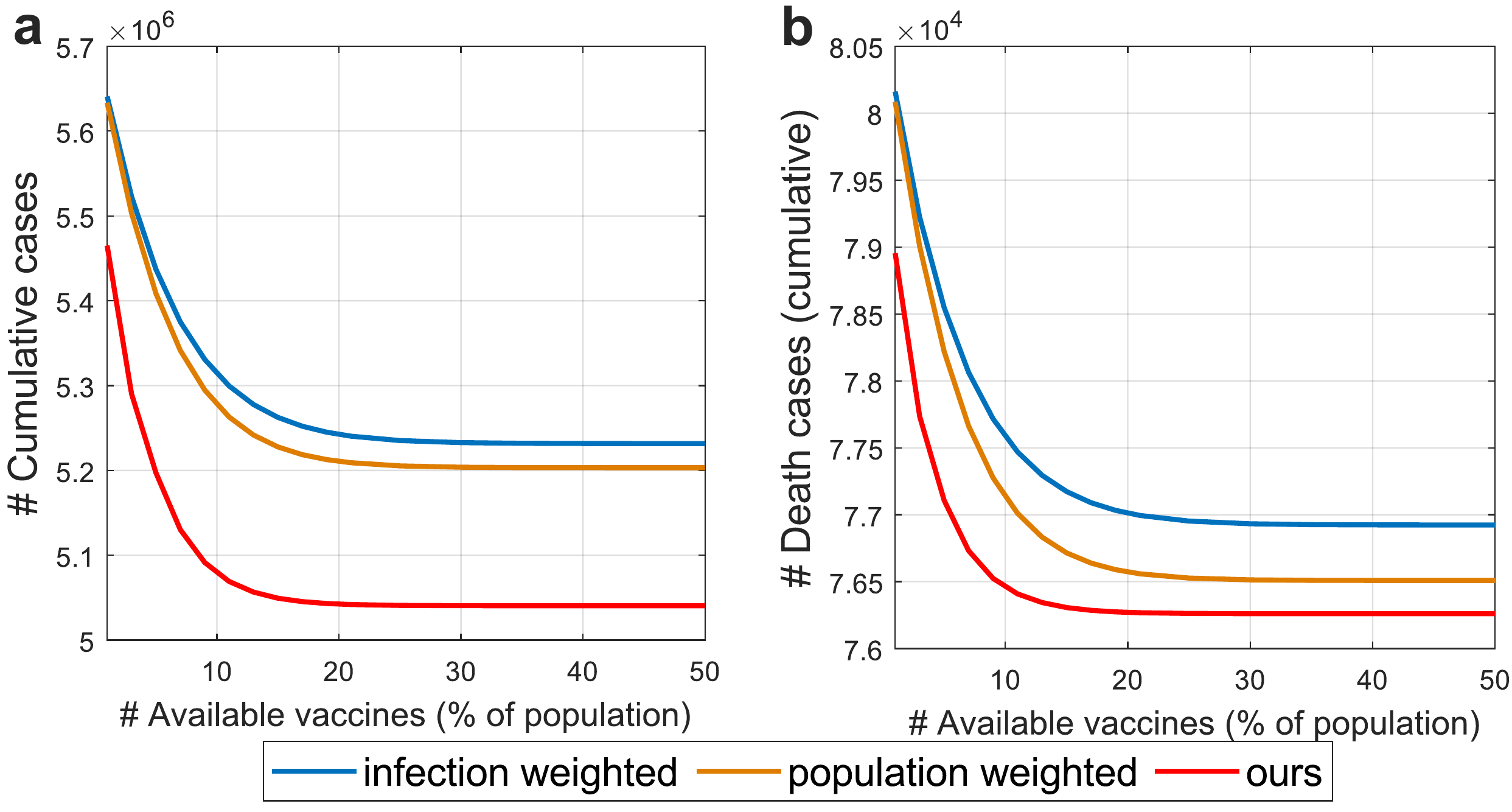}
	\centering
	\caption{{\bf When the demographic structure of the population is considered, our method outperforms the other comparison methods regardless of the number of the available vaccine doses}. {\bf a}, the estimated number of final cumulative cases.  {\bf b}, the estimated number of final cumulative death cases. The vaccines are supplied daily with a speed of $0.33\%$ of total population per day. The total number of the available vaccine doses ranges from 1\% to 50\% of the population. We only observe to $50\%$ as the epidemic disappears after supplying around $50\%$ people with vaccines. ``Population weighted" and ``infection weighted" polices are defined as in Fig. \ref{fig: demo_allocation_small}. The data applied is about COVID-19 outbreak in NY on December 1st, 2020. It can be observed that the superiority of our method over the other two methods is more obvious when there are enough number of the available vaccine doses.
	}\label{fig: demo_V_num_varying}
\end{figure}
\clearpage
\begin{figure}[!htb]
	\includegraphics[width=0.66\linewidth]{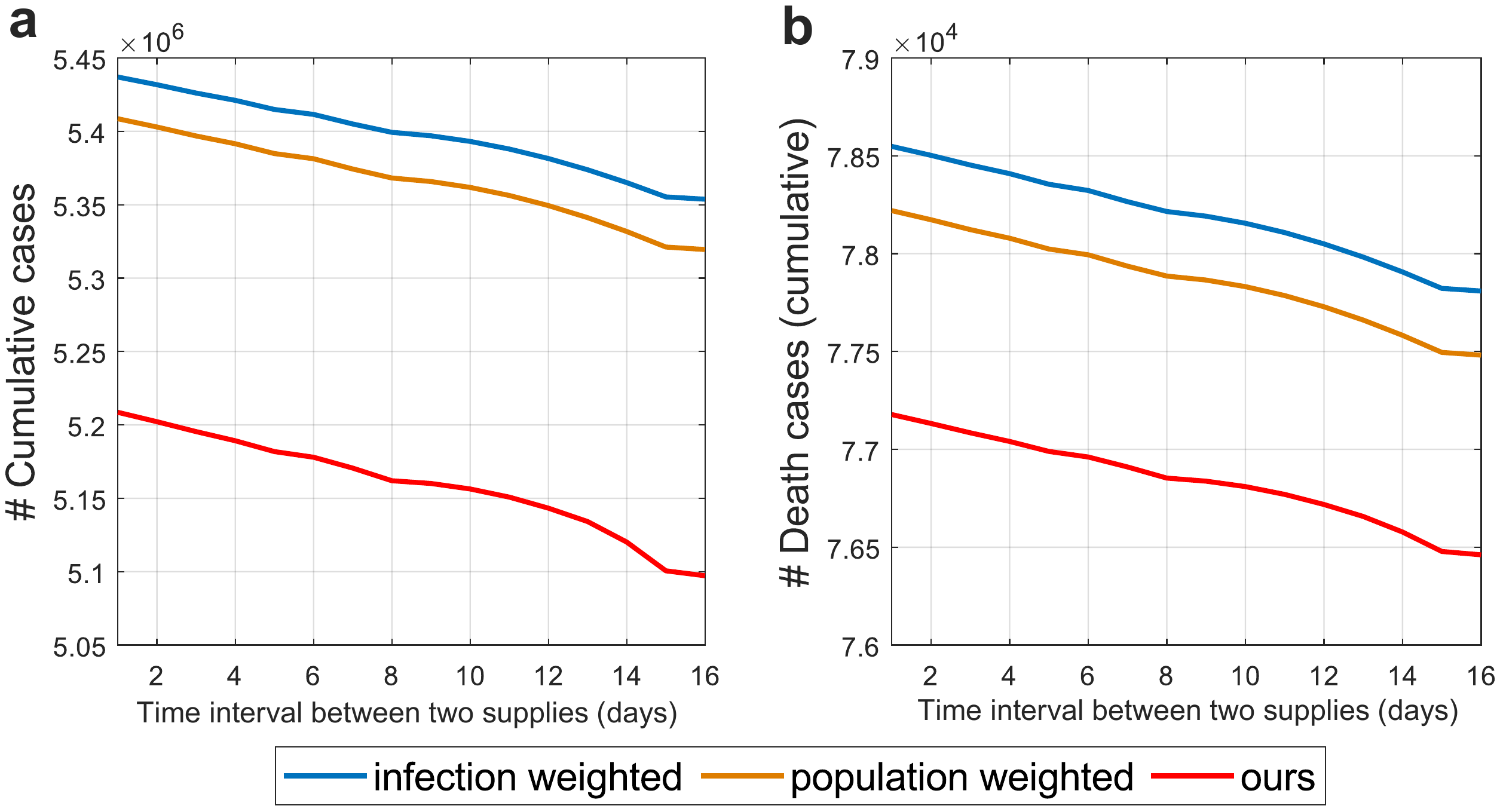}
	\centering
	\caption{{\bf When the demographic structure of the population is considered, our method outperforms the other comparison methods regardless of the time interval between two vaccine supplies}. {\bf a}, the estimated number of final cumulative cases.  {\bf b}, the estimated number of final cumulative death cases. The vaccines are supplied in the beginning of each period, the time interval between two vaccine supplies ranges from 1 day to 16 days. The average number of the vaccine supply per day for these scenarios are the same, that is 0.33\% of the population. The total number of the available vaccine doses for these policies is $5\%$ of the total population.  ``Population weighted" and ``infection weighted" polices are defined as in Fig. \ref{fig: demo_allocation_small}. The data applied is about COVID-19 outbreak in NY on December 1st, 2020. The number of the cumulative cases and the death cases decreases when the time interval increases as more people will be vaccinated in the beginning.
	}\label{fig: demo_period_varying}
\end{figure}
\clearpage
\begin{figure}[!htb]
	\includegraphics[width=0.66\linewidth]{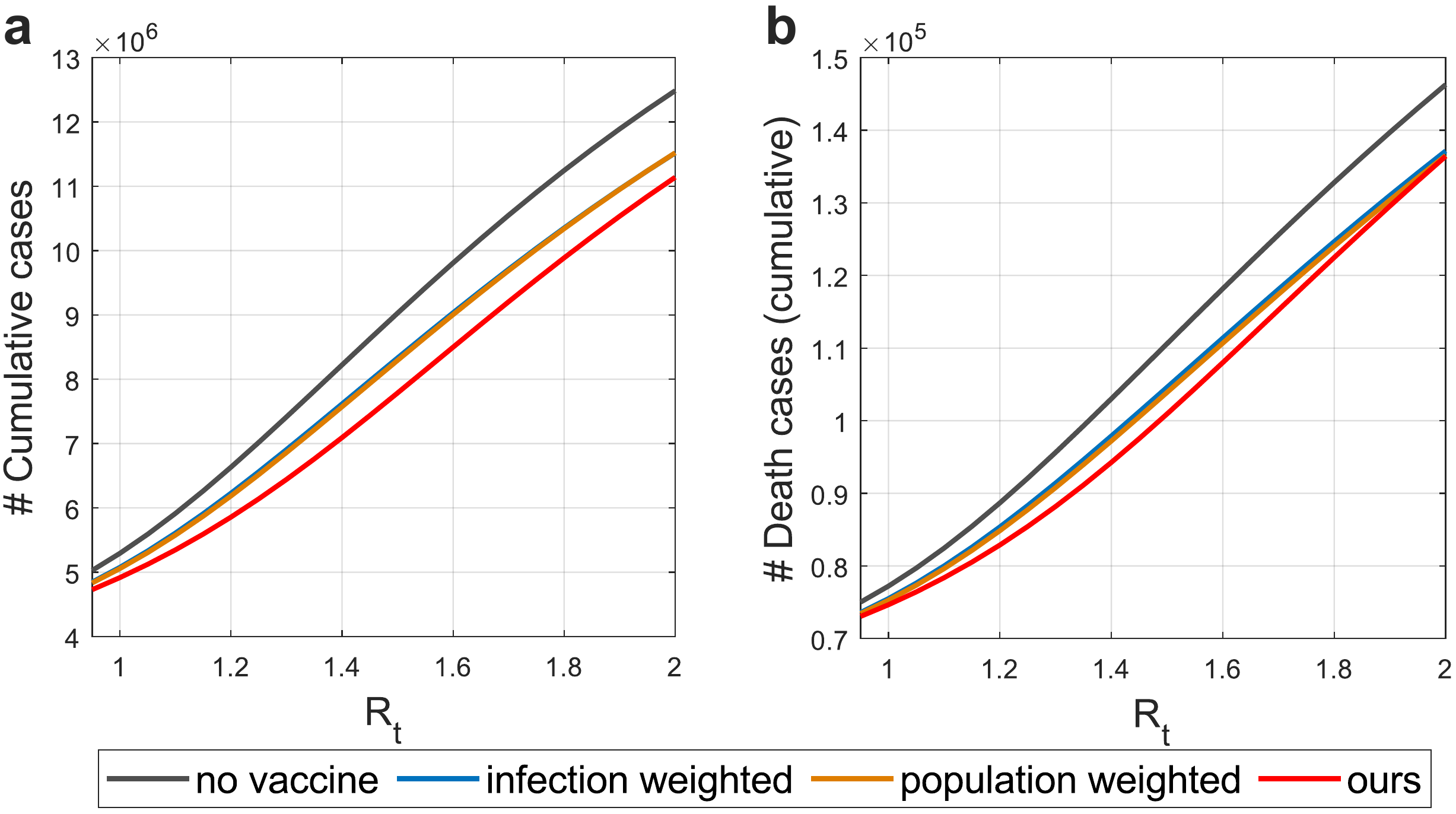}
	\centering
	\caption{{\bf When the demographic structure of the population is considered, our method outperforms the other comparison methods regardless of the the value of $R_t$}. {\bf a}, the estimated number of final cumulative cases.  {\bf b}, the estimated number of final cumulative death cases. In {\bf a} and {\bf b}, the value of $R_t$ ranges from 0.95 to 2.0. The vaccines are supplied daily with a speed of $0.33\%$ of total population per day. The total number of the available vaccine doses for these policies is $5\%$ of the total population.  ``Population weighted" and ``infection weighted" polices are defined as in Fig. \ref{fig: demo_allocation_small}. The data applied is about COVID-19 outbreak in NY on December 1st, 2020.
	}\label{fig: demo_Rt_varying}
\end{figure}

\begin{figure}[!htb]
	\begin{minipage}[b]{1.0\linewidth}
		\centering
		\includegraphics[width=1.0\linewidth]{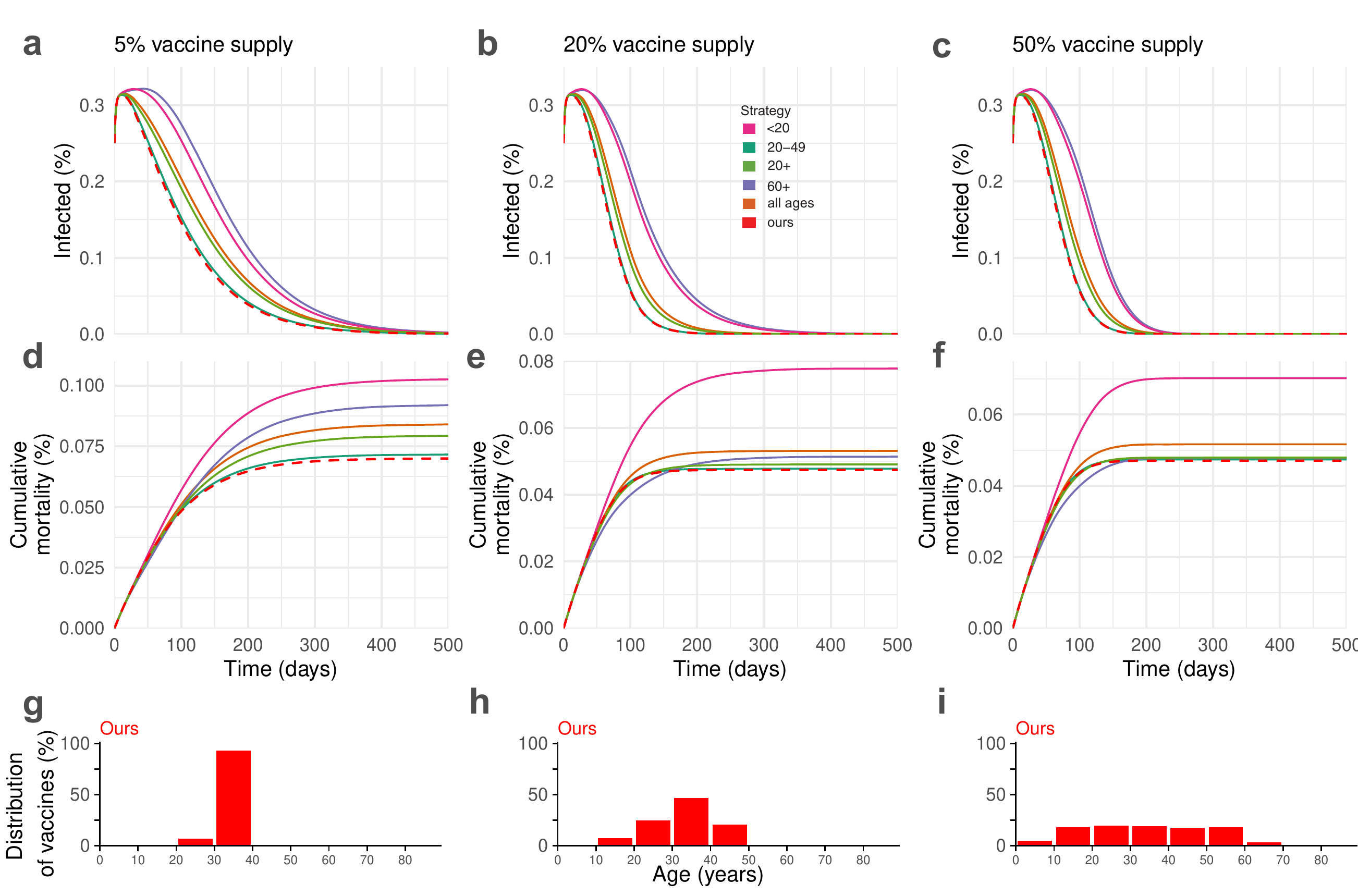}
	\end{minipage}%
	\centering
	\caption{{\bf Our method outperforms all the five age-stratified prioritization strategies in \cite{bubar2021model} when $R_0 = 1.05$.} {\bf a-f}, the estimated percentage of the infected cases and the cumulative mortality cases in the total population over 500 days. {\bf g-i}, distribution of vaccines of our method. In {\bf a-f}, the colors of the lines match with the polices, the red dashed line represents our policy. In {\bf a, d, g}, the total number of the available vaccine doses is 5\% of the population. In {\bf b, e, h}, the total number of the available vaccine doses is 20\% of the population. In {\bf c, f, i}, the total number of the available vaccine doses is 50\% of the population. All these results based on data and parameters for United States in \cite{bubar2021model}. The vaccines are supplied at 0.2\% of the total population per day. The vaccines are assumed to be all-or-nothing, transmission-blocking with 90\% efficacy.
	}\label{fig: R0_105_5_to_50}
\end{figure}

\begin{figure}[!htb]
	\begin{minipage}[b]{1.0\linewidth}
		\centering
		\includegraphics[width=1.0\linewidth]{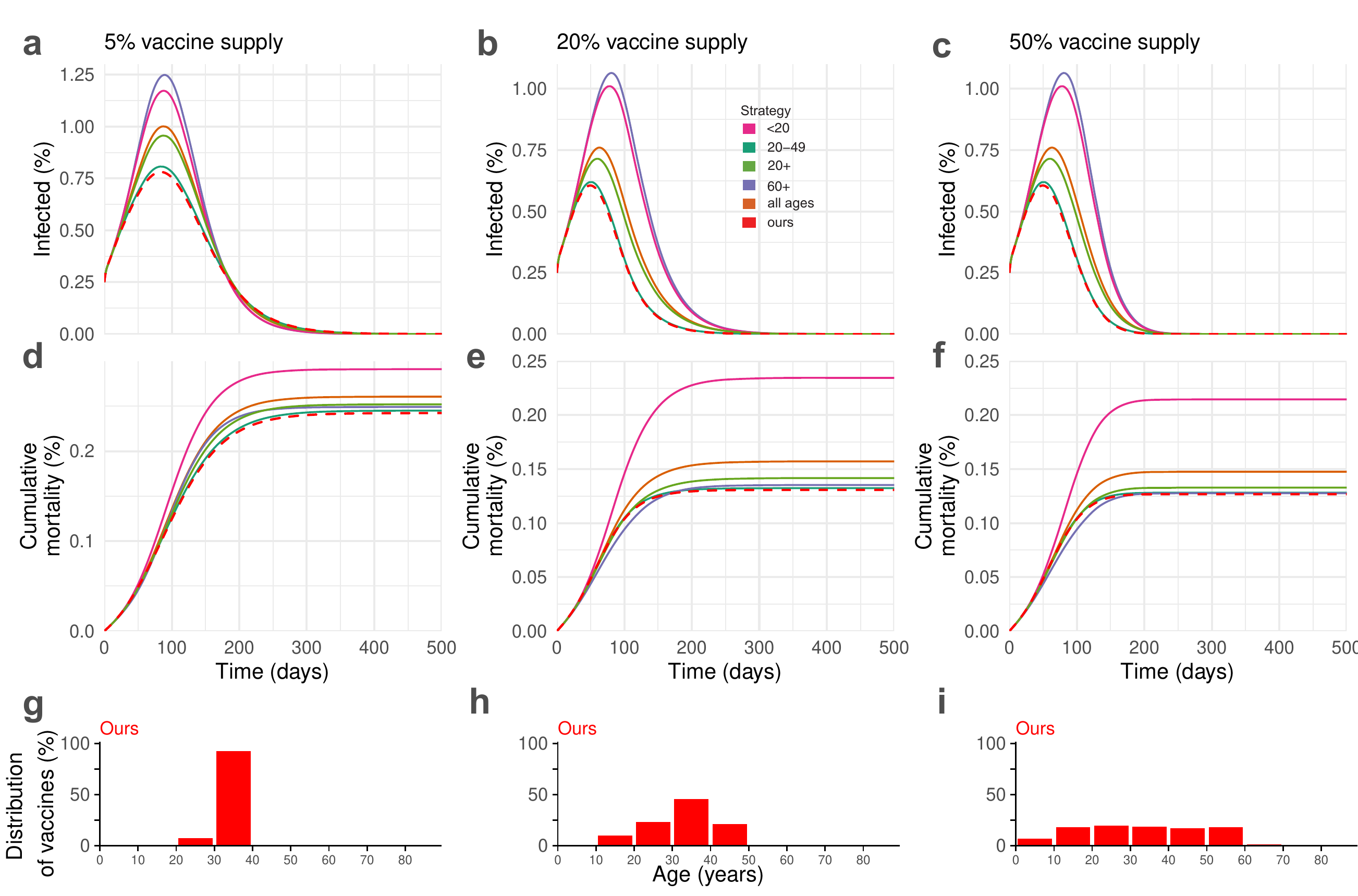}
	\end{minipage}%
	\centering
	\caption{{\bf Our method outperforms all the five age-stratified prioritization strategies in \cite{bubar2021model} when $R_0 = 1.25$.} {\bf a-f}, the estimated percentage of the infected cases and the cumulative mortality cases in the total population over 500 days. {\bf g-i}, distribution of vaccines of our method. In {\bf a-f}, the colors of the lines match with the polices, the red dashed line represents our policy. In {\bf a, d, g}, the total number of the available vaccine doses is 5\% of the population. In {\bf b, e, h}, the total number of the available vaccine doses is 20\% of the population. In {\bf c, f, i}, the total number of the available vaccine doses is 50\% of the population. All these results based on data and parameters for United States in \cite{bubar2021model}. The vaccines are supplied at 0.2\% of the total population per day. The vaccines are assumed to be all-or-nothing, transmission-blocking with 90\% efficacy.
	}\label{fig: R0_125_5_to_50}
\end{figure}

\begin{figure}[!htb]
	\begin{minipage}[b]{1.0\linewidth}
		\centering
		\includegraphics[width=1.0\linewidth]{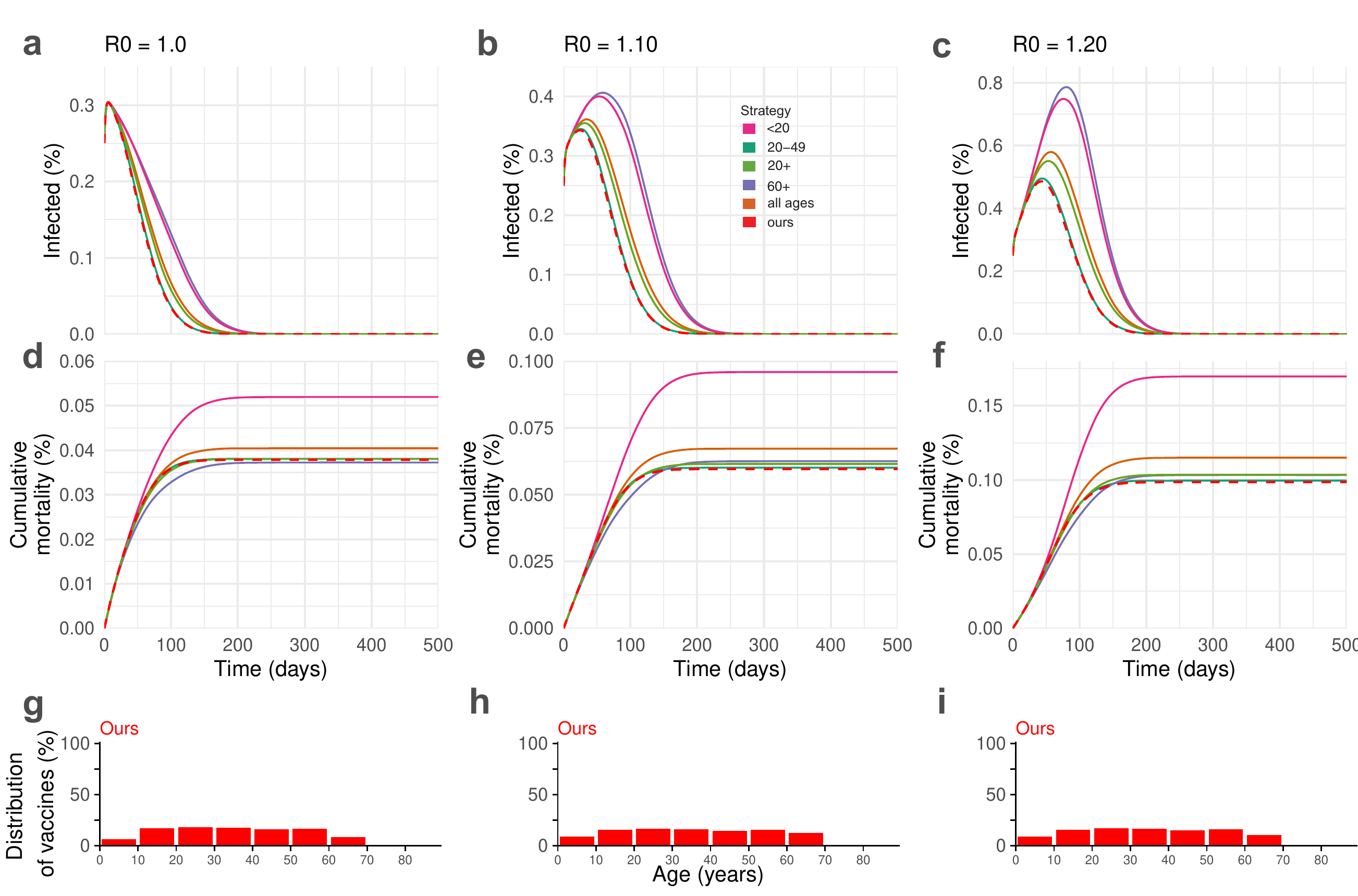}
	\end{minipage}%
	\centering
	\caption{{\bf If the vaccine supply is unlimited, and $R_0$ = 1.1, or 1.2, our method outperforms all the five age-stratified prioritization strategies in \cite{bubar2021model}.} {\bf a-f}, the estimated percentage of the infected cases and the cumulative mortality cases in the total population over 500 days. {\bf g-i}, the distribution of vaccines of our method. In {\bf a-f}, the colors of the lines match with the polices, the red dashed line represents our policy. In {\bf a, d, g}, $R_0$ = 1.0. In {\bf b, e, h}, $R_0$ = 1.1. In {\bf c, f, i}, $R_0$ = 1.2.  In this figure, the total vaccine supply is assumed to be 100\% of the population, in {\bf g-i}, we only show the distribution of vaccines before the disappearing of the epidemics, after that, the leftover vaccines would be evenly distributed to these age groups. In {\bf d}, our method slightly underperforms strategy that gives prioritization to seniors (60+).  All these results based on data and parameters for United States in \cite{bubar2021model}. The vaccines are supplied at 0.2\% of the total population per day. The vaccines are assumed to be all-or-nothing, transmission-blocking with 90\% efficacy.
	}\label{fig: R0_100_110_120_100_percent}
\end{figure}

\begin{figure}[!htb]
	\begin{minipage}[b]{0.66\linewidth}
		\centering
		\includegraphics[width=1.0\linewidth]{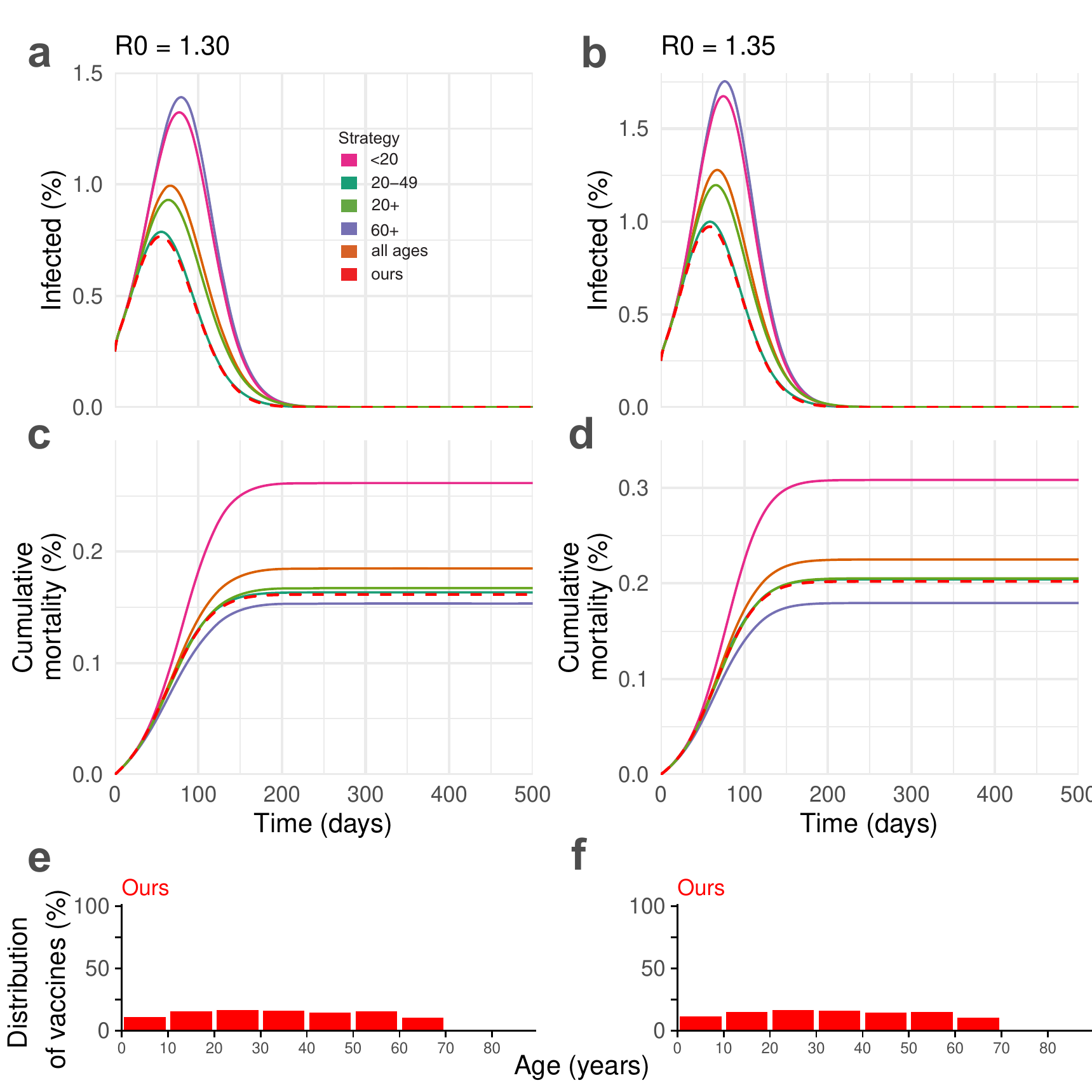}
	\end{minipage}%
	\centering
	\caption{{\bf If the vaccine supply is unlimited, and $R_0$ = 1.30, or 1.35, our method is the best in terms of the infected cases, the prioritization strategy for seniors (60+) is the best when consider the cumulative mortality cases.} {\bf a-d}, the estimated percentage of the infected cases and the cumulative mortality cases in the total population over 500 days. {\bf e, f}, the distribution of vaccines of our method. In {\bf a-d}, the colors of the lines match with the polices, the red dashed line represents our policy. In {\bf a, c, e}, $R_0$ = 1.30. In {\bf b, d, f}, $R_0$ = 1.35.  In this figure, the total vaccine supply is assumed to be 100\% of the population, in {\bf e, f}, we only show the distribution of vaccines before the disappearing of the epidemics, after that, the leftover vaccines would be evenly distributed to these age groups. All these results based on data and parameters for United States in \cite{bubar2021model}. The vaccines are supplied at 0.2\% of the total population per day. The vaccines are assumed to be all-or-nothing, transmission-blocking with 90\% efficacy.
	}\label{fig: R0_130_135_100_percent}
\end{figure}

\clearpage
\begin{figure}[!htb]
	\includegraphics[width=1.0\linewidth]{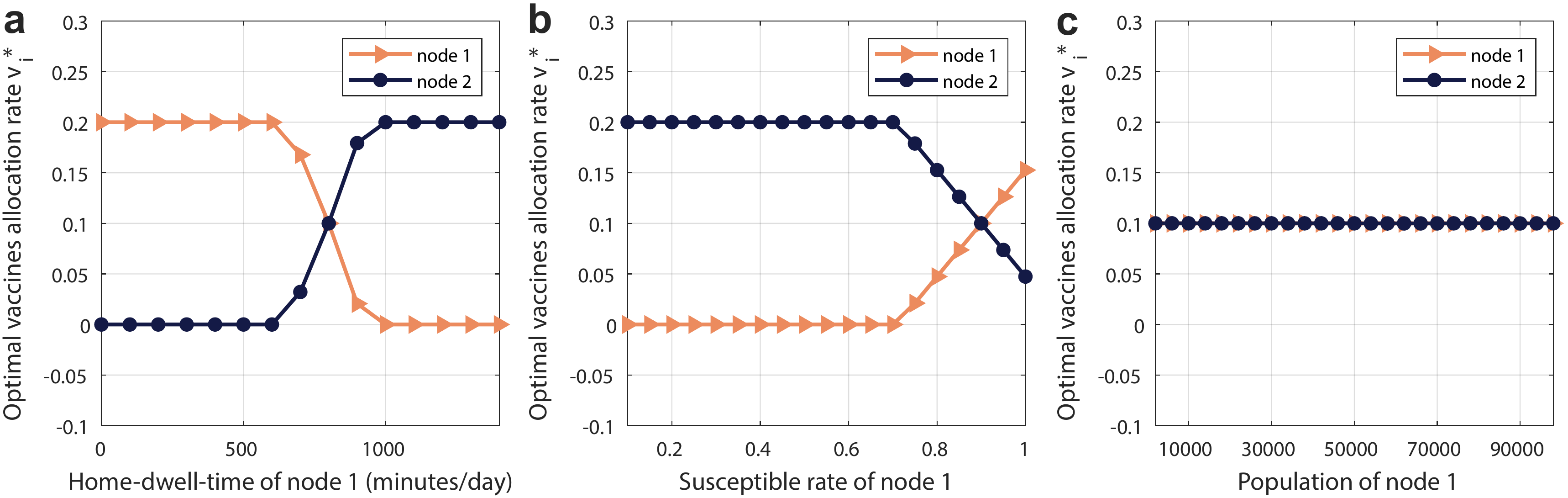}
	\centering
	\caption{{\bf For the two-nodes network model in SI Sec. \ref{sec: two nodes}, our optimal stabilizing allocation policy suggests to give zero vaccines to the node with larger value of home-dwell-time and smaller value of susceptible rate.} {\bf a}, the value of $v_i^*$ when the daily home-dwell-time of node 1 ranges from 0 to 1440 minutes. {\bf b}, the value of $v_i^*$ when the susceptible rate of node 1 ranges from 0.1 to 1. {\bf c}, the value of $v_i^*$ when the population of node 1 ranges from 2000 to 10000. It can be seen that the value of $v_i^*$ is sensitive to the home-dwell-time and the susceptible rate but not sensitive to the population. 
	}\label{fig: small_network_sensitivity}
\end{figure}

\clearpage

\clearpage

\section{Supplementary Tables}
\begin{table}[h]
	\centering
	\caption{Value of $d_A$ and $d_S$ in references}\label{tb: d_A d_S}
	\scalebox{0.9}{
		\begin{tabular}{c  p{1.3cm}<{\centering} p{1.4cm}<{\centering} p{1.5cm}<{\centering} p{1.3cm}<{\centering} p{1.3cm}<{\centering} p{1.3cm}<{\centering}}
			\toprule
		Reference &  \cite{birge2020controlling} & \cite{giordano2020modelling} & \cite{bertozzi2020challenges} & \cite{bubar2021model} & \cite{luciana2020} & \cite{bertsimas2020optimizing} \\
			\midrule  
			$d_A$ & 1/0.29 & 1/0.0034 & 5 & 3 & 5.1 & 10 \\
			$d_S$ & 1/0.29 & 1/0.017 & 5 & 5& 7.4 & 15 \\
			\bottomrule
		\end{tabular}
	}
\end{table}

\begin{table}[h]
	\centering
	\caption{Death rate $\kappa_i$ used in our simulations}\label{tb: death rate}
	\scalebox{0.9}{
		\begin{tabular}{c  p{1.3cm}<{\centering} p{1.3cm}<{\centering} p{1.4cm}<{\centering} p{1.3cm}<{\centering} p{1.3cm}<{\centering} p{1.3cm}<{\centering}}
			\toprule
			age group &  0-4 & 5-19 & 20-29 & 30-44 &45-64 & 65+ \\
			\midrule  
			$\kappa_i$ & 0.0002 & 0.00018 & 0.00036 & 0.0018 & 0.0094 & 0.0945 \\
			\bottomrule
		\end{tabular}
	}
\end{table}

\begin{table}[!htb]
	\centering
	\caption{The optimal stabilizing vaccine allocation rate $v_i^*$ for the two-nodes network model. \label{tab: two-nodes}
	}
	\scalebox{0.85}{
		\begin{tabular}{m{1.5cm}<{\centering} m{2.5cm}<{\centering} m{2cm}<{\centering} m{2cm}<{\centering} m{3.2cm}<{\centering} }
			\toprule Scenario & Case 1 & Case 2 & Case 3 & Case 4
			\\
			\midrule  
			$v_i^*$ &  [0.1\quad 0.0998] & [0\quad 0.2] & [0\quad 0.2] & [0.0923\quad 0.8744]  \\
			\bottomrule
		\end{tabular}
	}
\end{table}
\clearpage

\end{document}